\documentclass[12pt]{amsart}
\usepackage{calrsfs}
\usepackage{soul}
\usepackage[greek,english]{babel}
\usepackage[iso-8859-7]{inputenc}
\usepackage{graphicx}
\usepackage{hyperref}
\usepackage{amsthm}
\usepackage{amssymb}
\usepackage{multirow}
\usepackage{tikz-cd}
\usepackage{amsmath}
\usepackage{todonotes}
\usepackage{amsbsy}
\usepackage[all]{xy}
\usepackage{changes}
\usepackage{enumerate}
\usepackage{comment}
\usepackage{mathtools}
\usepackage{changes}
\usepackage{mdframed}
\usepackage{faktor} 
\usepackage{stackrel}
\usepackage[shortlabels]{enumitem}
\usepackage{ bbold }
\usepackage{hhline}

\usepackage{stmaryrd}

\definechangesauthor[color=orange,name={Peter Symonds}]{PS}
\definechangesauthor[color=red, name={Kostas Karagiannis}]{KK}

\usepackage[margin=1 in]{geometry}

\newtheorem{theorem}{Theorem}[subsection]
\newtheorem*{theorem*}{Theorem}
\newtheorem{lemma}[theorem]{Lemma}
\newtheorem{corollary}[theorem]{Corollary}
\newtheorem{proposition}[theorem]{Proposition}

\theoremstyle{definition}
\newtheorem{example}[theorem]{Example}
\newtheorem{remark}[theorem]{Remark}
\newtheorem{definition}[theorem]{Definition}

\newcommand{\Spec}{{\rm Spec }}

\newcommand{\eva}{\rm ev}

\newcommand{\suchthat}{\;\ifnum\currentgrouptype=16 \middle\fi|\;}

\newcommand{\aprod}{\mathop{\operator@font \hbox{\Large$\ast$}}}

\DeclareMathOperator{\ind}{\mathbf{I}}
\DeclareMathOperator{\coind}{\mathbf{C}}
\DeclareMathOperator{\res}{\mathbf{R}}

\DeclareMathOperator{\Hom}{Hom}

\DeclareMathOperator{\Tr}{Tr}
\DeclareMathOperator{\soc}{soc}
\DeclareMathOperator{\ide}{Id}

\DeclareMathOperator{\infl}{Inf}

\date{\today}

\keywords{Group schemes, transfer, norm}
\subjclass{14L15}

\title{Transfer and Norm for Finite Group Schemes}

\author[K. Karagiannis]{Kostas Karagiannis }
\address{
Department of Mathematics,			
University of Manchester,
Manchester M13 9PL, 
United Kingdom.}
\email{konstantinos.v.karagiannis@gmail.com}

\author[P. Symonds]{Peter Symonds }
\address{
Department of Mathematics,			
University of Manchester,
Manchester M13 9PL, 
United Kingdom.}
\email{peter.symonds@manchester.ac.uk}

\date \today



\begin{document}

\begin{abstract}
	We develop the theory of transfer and norm maps for finite group schemes, extending classical results from finite group theory to a context where induction and restriction are not necessarily bi-adjoint. In the additive setting, we construct a  transfer map for both modules and $\rm Ext $ groups and prove that its surjectivity characterizes relative projectivity, establishing a  generalization of Higman's criterion. In the multiplicative setting, we define a relative norm map for algebras with a group scheme action. We compare this norm with other versions in the literature, proving that it coincides with Mumford's norm for finite morphisms and that, on fields, it is a power of the classical field norm.
\end{abstract}
\maketitle

\section{Introduction}

\subsection{Overview} The transfer map in group representation theory grew out of Schur's transfer map in group theory and the Reynolds operator in invariant theory. It is an additive operator on modules or ${\rm Ext}$ groups that  has developed into an indispensable and ubiquitous tool.  There are also various multiplicative versions in different contexts, such as the field norm, Mumford's norm and the Evens norm in group cohomology. The objective of this paper is to develop systematically a theory of trace and norm maps in the context of finite group schemes over a field, in the hope and expectation that it will prove as useful as the original.

 For group schemes, a definition involving summing or multiplying over the orbit of an element will clearly not work and we need a different approach, based on induced modules. However, a fundamental difference between the representation theory of finite group schemes and that of finite groups is that, in the more general context, the left and right adjoints of restriction to a subgroup  are not always isomorphic, although they are related by the Wirthm\"uller isomorphism; this introduces a twist by a one-dimensional representation. We show how a transfer can still be defined and that it has many of the properties that we desire, for example the relation with relative projectivity. There is also a version of Mackey's double coset formula, but it is necessarily weaker than one might have hoped. We also define a multiplicative norm for a group scheme action on an algebra and relate it to other norms in the literature.

\subsection{Outline} Induction and coinduction are introduced in Section \ref{sec:ind}, while Section \ref{sec:modular} studies modular functions, a measure of the extent to which they fail to be isomorphic. The precise relationship between induction and coinduction is given in Section \ref{sec:wirth}, via the so-called Wirthm\"uller isomorphism. Section \ref{sec:deftransfer} proceeds with the definition of a transfer map that generalizes the one for finite groups and in Section \ref{sec:proptransfer} its basic properties are developed. Section \ref{sec:Higman} investigates the connection between transfer and relative projectivity and a variant of Higman's Criterion is proved, while a generalization of Mackey's double coset formula is presented in Section \ref{sec:Mackey} . The definition of the transfer and its basic properties are extended to cohomology and Ext groups in Section \ref{sec:coh}. In Section \ref{sec:notation} our attention switches to an action of a finite group scheme on a commutative algebra. Following a study of invariants of infinitesimal group schemes in Section \ref{sec:norminf}, a version of the relative norm map is defined in Section \ref{sec:defnorm}, with its basic properties developed in Section \ref{sec:propnorm}. Section \ref{sec:fieldnorm} treats the case in which the object acted upon is a field and investigates connections with the classical field norm, while Section \ref{sec:Mumford} compares the relative norm defined in this paper to a construction of Mumford.

\subsection*{Acknowledgments}
This research was supported by the Engineering and Physical Sciences Research Council (Grant no. EP/V036017/1).

\section{Preliminaries }\label{sec:prel}

\subsection{Notation \& Conventions}\label{sec:not}
Let $G$ be a finite group scheme over a field $k$ of characteristic $p>0$. The coordinate ring of $G$ is a finite dimensional, commutative, Hopf $k$-algebra, denoted by $k[G]$. Its $k$-dual, $kG=k[G]^*$, is also endowed with the structure of a Hopf $k$-algebra and will be referred to as the group algebra of $G$ (\cite[9.1.3]{MR1243637} , where it is denoted $M(G)$). Left $G$-modules in the functorial sense are identified with left $kG$-modules, see \cite[Theorem 2.1]{MR1885648}, similarly on the right. Unless otherwise specified, modules are assumed to be left modules, but we can switch between left and right modules using the antipode and this will be assumed to have been done when it is necessary for a formula to make sense. Unsubscripted Hom sets and tensor products are meant to be taken over the ground field $k$.

\subsection{Induction \& Coinduction}\label{sec:ind}
Given a closed subgroup scheme $H$ of $G$, let $\mathbf{R}_H^G(-)$ be the restriction functor  from the category of $G$-modules to the category of $H$-modules; when it is clear from the context, the restriction of a $G$-module $M$ to $H$ will also be denoted by $M$. We regard $kG$ as a $G{-}G$-bimodule; when we wish to keep track of which copy of $G$ acts on which side we will write something such as ${}_{G_1}kG_{G_2}$, where $G_1$ and $G_2$ are the two copies of $G$. The subspace of $M$ of invariants under the action of $G$ will be denoted by ${}^GM$ or $M^G$, according to the side of the action. 

The algebra $kG$ is a Frobenius algebra, so has many of the properties expected of an ordinary group algebra; however, it might not be symmetric.

In particular, we have a left adjoint to restriction, known as coinduction, $\coind_H^G=k{}_GG_H\otimes_{kH} -$ and a right adjoint, known as induction, $\ind^G_H = \Hom_{kH}(k{}_HG_G,-)$. These functors satisfy the usual projection formulas $M \otimes \coind^G_H N \cong \coind^G_H (\res^G_H M \otimes N)$ and $M \otimes \ind^G_H N \cong \ind^G_H (\res^G_H M \otimes N)$.

This is the conventional notation for group schemes. In group representation theory the names of induction and coinduction are usually switched.

Denote by $(\eta_{G,H},\epsilon_{G,H})$ the unit and counit associated to the adjunction between $\coind_H^G$ and $\res_H^G$ and by $(\eta_{H,G},\epsilon_{H,G})$ that between $\res_H^G$ and $\ind_H^G$; explicitly, if $M$ is a $G$-module and $N$ is an $H$-module, then

\begin{align}\label{eq:unit-counit}
 &\eta_{G,H}(N) \colon N\rightarrow {\bf R}_H^G\coind_H^G(N),
\quad \;\;\; \epsilon_{G,H}(M) \colon \coind_H^G{\bf R}_H^G (M)\rightarrow M&
 \\ \nonumber
 &\eta_{H,G}(M) \colon M \rightarrow \ind_H^G{\bf R}_H^G(M),
\quad \epsilon_{H,G}(N) \colon {\bf R}_H^G \ind_H^G(N)\rightarrow N .&
\end{align}

In our case there are standard choices for these, which we will always use.

The three functors are transitive: if $K$ is a closed subgroup scheme of $H$, then there are standard isomorphisms $\coind_H^G\circ\coind_K^H\cong\coind_K^G,\; \mathbf{R}_K^H\circ \mathbf{R}_H^G \cong\mathbf{R}_K^G$ and $\ind_H^G\circ \ind_K^H\cong\ind_K^H$.

\subsection{Integrals and Modular functions} \label{sec:modular}

By the Larson-Sweedler Theorem for finite dimensional Hopf algebras \cite[3.2]{MR2894798}, \cite[2.1.3]{MR1243637}, the spaces of invariants under either the left or the right action of $G$ on $kG$ both have dimension one (see \cite[I.8.7]{MR2015057} for group schemes). These are known as the spaces of left or right integrals \cite[2.1.1]{MR1243637} (or invariant measures in \cite[I.8.8]{MR2015057}). They are modules for the action of $G$ on the other side, and we define a left module ${}_G\delta = kG^G$ and a right module $\delta_G = {}^GkG$. The antipode takes one vector space to the other and allows us to identify ${}_G\delta = kG^G$ and $\delta_G = {}^GkG$, provided that we remember our convention that swapping sides involves the antipode (even though this is not necessary for 1-dimensional modules).

\begin{definition}\label{def:delta}
In the above notation:

\begin{enumerate}

\item Note that the definition of ${}_G\delta$ contains more information than just the isomorphism class of a module; there is a specific injection of left $G$-modules ${}_G\delta \rightarrow kG$. 

\item The character of ${}_G\delta$ is known as the modular function. Often ${}_G\delta$ is just defined to be this character, for example in \cite[I.8.8]{MR2015057}, but the extra precision will ensure that various constructions are canonical rather than just defined up to scalar multiple.

\item When $G$ is a finite group, we have ${}_G\delta$ = $\delta_G$ as vector spaces, with canonical basis element  $\Sigma_G = \sum_{g\in G}g \in kG$. When $G$ is \'{e}tale, $\Sigma_G$ is defined when we pass to the algebraic closure and it is invariant under the absolute Galois group, so still gives us a canonical basis element for ${}_G\delta$.
 
\item One says that $G$ is {\em unimodular} if ${}_G\delta$ is trivial. For an  \'{e}tale  group scheme there is a canonical isomorphism from ${}_G\delta$ to $k$ that sends $\Sigma_G$ to 1, but in general this isomorphism is not canonical. 

\item For a 1-dimensional module $L$, left or right, we write $L^{-1}$ for the dual of $L$, considered as a module for the same side via the antipode. Thus $L \otimes L^{-1}$ is trivial.

\item The {\em relative dualizing object} of $H$ in $G$ is defined to be the left $H$-module $\omega _{H,G}=\mathbf{R}_H^G({}_G\delta) \otimes {}_H\delta^{-1}$. 
\end{enumerate}
\end{definition}

 When $K$ and $H$ are closed subgroup schemes of $G$ with $K\leq H$ we have a canonical isomorphism $\omega_{K,G} \cong \res^H_K(\omega_{H,G}) \otimes_k \omega_{K,H}$ which will be abbreviated as $
\omega_{K,G}=\omega_{H,G}\omega_{K,H}$.

\section{The transfer}\label{sec:transfer}

As in the previous section, $G$ is a finite group scheme over a field $k$ of characteristic $p>0$, $H$ is an arbitrary closed subgroup scheme, and $\coind_H^G \dashv \mathbf{R}_H^G\dashv \ind_H^G$ is the coinduction-restriction-induction adjoint triple.

\subsection{Definition of the transfer}\label{sec:deftransfer}
In the notation of the previous section, we have 

\[ {}_G \delta = kG^G \subseteq kG^H \cong kG \otimes_H kH^H = kG \otimes_H {}_H \delta . \]

This gives us a canonical injection $\tilde{t}_{H,G} \! : {}_G \delta \rightarrow \coind^G_H {}_H \delta $. Sometimes it is convenient to tensor this with ${}_G \delta^{-1}$ to obtain $t_{H,G} \! : k \rightarrow {}_G \delta^{-1} \otimes  \coind^G_H {}_H \delta \cong \coind^G_H ({}_G \delta^{-1} {}_H \delta) \cong \coind ^G_H \omega^{-1}_{H,G}$.

\begin{definition}\label{def:transfer}
	Let $L, M$ and $N$ be left $G$-modules. The transfer from $H$ to $G$ on $L$ is the map of $k$-modules
	
	\[\Tr _H^G  \! : \Hom_H( \omega^{-1}_{H,G}, L) \rightarrow \Hom_G(k,L)\]
	
	given by
	
	\[ \Hom_H (\omega_{H,G}^{-1}, L) \cong \Hom_G(\coind^G_H \omega_{H,G}^{-1},L) \xrightarrow{t^*_{H,G}} \Hom_G(k,L).\]
	
	In formulas this becomes $\Tr^G_H(f) = \epsilon_{G,H}(L) \circ (\ide_{kG} \otimes f) \circ t^G_H$ for $f
	\in \Hom_H (\omega_{H,G}^{-1}, L)$, where $\epsilon_{G,H}(L) \! : \coind ^G_H  \res ^G_H L \rightarrow L$, $g \otimes x \mapsto gx$ is the counit of the adjunction.
	
	Since $\Hom_H( \omega^{-1}_{H,G}, L) \cong {}^H(\omega_{H,G} \otimes L ) $ and $\Hom_G(k,L) \cong\;^G L$, we can also regard the transfer as a map 
	
		\[ \Tr_H^G \! :  {}^H(\omega_{H,G} \otimes L ) \rightarrow {}^G L. \]
	
	We can also use $\tilde{t}^G_H$ to obtain a map
	
		\[ \tilde{\Tr}_H^G \! :  {}^H({}_H \delta \otimes M ) \rightarrow {}^G({}_G \delta \otimes M). \]
		
		Replacing $L$ by $\Hom (M,N)$, we obtain
		
			\[ \Tr_H^G \! :  {}^H(\omega_{H,G} \otimes \Hom (M,N) ) \rightarrow {}^G \Hom (M,N). \]
			
			The domain can also be written as $\Hom_H(M,\omega _{H,G} \otimes N)$ or $\Hom_H(\omega _{H,G}^{-1} M \otimes N)$ and the codomain is, of course, $\Hom_G(M,N)$.

\end{definition}
We now pause to make some comments on the definition.

\begin{enumerate}

\item If $G$ is a finite group and we use the basis elements $\Sigma_G$ and $\Sigma_H$ for ${}_G \delta$ and ${}_H\delta$ as before, then ${}_G \delta$ and ${}_H\delta$ are canonically isomorphic to $k$ and we obtain 	$\Tr_H^G \! :  {}^HL  \rightarrow {}^G L$. Furthermore, ${}_G \delta = \sum_{g \in G/H} g \otimes {}_H \delta$, so we obtain the usual transfer as in \cite[3.6.2]{benson}. \\

\item If $G$ and $H$ are unimodular (or just if $\omega _{H,G}$ is trivial) we have ${}^H(\omega_{H,G} \otimes M) \cong {}^HM$, but not canonically. We obtain a map $\Tr_H^G \! :  {}^H L  \rightarrow {}^G L$, but it is only determined up to non-zero scalar multiple.

\item Variants of the transfer have been studied in the literature by several authors. \cite[2.11]{MR1375579} treated the case of unimodular Hopf algebras, whereas that of symmetric Frobenius algebras can be found in \cite[p.57]{MR1102964} and \cite[2.4]{MR2863463}. \cite[6.6]{MR2526081} gave a more general treatment in the context of triangulated categories. In all the above cases, induction and restriction are assumed to be bi-adjoint. To the best of our knowledge, the only two sources where this assumption is dropped are \cite[\S 6]{MR190183} and \cite[\S 2]{MR585205}; the definition above is motivated by these two papers.

\end{enumerate}

\subsection{Properties of the transfer}\label{sec:proptransfer}

The results below can be seen as generalizations of well-known properties of the transfer in the context of finite groups, cf.\ \cite[3.6.3]{benson}, to that of finite group schemes.

\begin{proposition}\label{pr:properties-tr}
Let $L,L',M',M,N',N$ be $G$-modules, and let $K,H$ be closed subgroup schemes of $G$ with $K\leq H$.

\begin{enumerate}
\item 
\begin{enumerate}
	\item \label{tr1a}
	If $d \in \Hom_G(L,L')$ and $ s \in \Hom_H(\omega^{-1}_{H,G}, L)$ then $\Tr^G_H (d \circ s) = d \circ \Tr^G_H(s)$.
	\item \label{tr1b}
If $f \in \Hom_H (M,  \omega_{H,G} \otimes N), \; g \in \Hom_G (M',M)$ and $h \in \Hom_G (N,N')$, then 
\[
h \circ \Tr_H^G(f)\circ g = \Tr_H^G( ( \ide_{\omega_{H,G}} \otimes h) \circ f \circ g).
\]
\end{enumerate}

\item \label{tr2} $\tilde{\Tr}^G_K = \tilde{\Tr}^G_H \circ \tilde{\Tr}^H_K \! :  {}^K({}_K \delta  \otimes L) \rightarrow {}^G({}_G \delta  \otimes L)$ and 
$\Tr^G_K = \Tr^G_H \circ (\ide _{\omega_{H,G}} \otimes \Tr^H_K) \! :  {}^K( \omega_{K,G}  \otimes L) \rightarrow {}^GL$. 

\item \label{tr3} If $\ell /k$  is a field extension, and $G_\ell=G\times_{\Spec (k)} \Spec (\ell),\;H_\ell=H\times_{\Spec (k)} \Spec (\ell)$, then $  \Tr_{H_\ell}^{G_\ell} = \Tr_{H}^G \otimes \ell \! : 
{}^H(\omega_{H_\ell,G_\ell} \otimes L) \rightarrow {}^G(\ell \otimes L)$.

\end{enumerate}
\end{proposition}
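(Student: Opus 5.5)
The plan is to work throughout with the explicit formula $\Tr^G_H(f)=\epsilon_{G,H}(L)\circ(\ide_{kG}\otimes f)\circ t_{H,G}$, i.e.\ $\Tr^G_H(f)$ is the composite
\[ k\xrightarrow{t_{H,G}}\coind^G_H\omega^{-1}_{H,G}\xrightarrow{\coind(f)}\coind^G_H\res^G_H L\xrightarrow{\epsilon_{G,H}(L)} L . \]
For (1a), the counit is natural in $L$: for $d\in\Hom_G(L,L')$ we have $d\circ\epsilon_{G,H}(L)=\epsilon_{G,H}(L')\circ\coind^G_H\res^G_H(d)$. Together with functoriality of $\coind^G_H$, $\coind(d)\circ\coind(s)=\coind(d\circ s)$, this gives $d\circ\Tr^G_H(s)=\Tr^G_H(d\circ s)$. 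For (1b), I apply (1a) to $L=\Hom(M,N)$ and $L'=\Hom(M',N')$. The map $d\colon\phi\mapsto h\circ\phi\circ g$ is $G$-linear because $g$ and $h$ are $G$-maps. One then only has to check that the identifications ${}^H(\omega_{H,G}\otimes\Hom(M,N))\cong\Hom_H(M,\omega_{H,G}\otimes N)$ carry $\ide_\omega\otimes d$ to $f\mapsto(\ide_\omega\otimes h)\circ f\circ g$. This is routine, since the identification is natural in $M$ and $N$.

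For (2), I first prove the statement for $\tilde{\Tr}$, and then deduce the statement for $\Tr$ by tensoring with ${}_G\delta^{-1}$ and using $\omega_{K,G}=\omega_{H,G}\omega_{K,H}$. The key point is that the injections $\tilde t$ are compatible with transitivity. Consider
\[ {}_G\delta=kG^G\subseteq kG^H\cong kG\otimes_H kH^H=kG\otimes_H{}_H\delta\subseteq kG\otimes_H kH^K\cong kG\otimes_H kH\otimes_K{}_K\delta\cong kG\otimes_K{}_K\delta . \]
The composite equals the inclusion $kG^G\subseteq kG^K\cong kG\otimes_K{}_K\delta$. This holds because the isomorphism $kG\otimes_H kH\cong kG$ is multiplication, which is exactly the standard transitivity isomorphism $\coind^G_H\coind^H_K\cong\coind^G_K$. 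Hence $\tilde t_{K,G}=\coind^G_H(\tilde t_{K,H})\circ\tilde t_{H,G}$ under this identification. Next I check that the counit for $K\le G$ factors as the counit for $H\le G$ composed with $\coind^G_H$ of the counit for $K\le H$. This is the standard compatibility of the counits of the composed adjunctions, and concretely both sides send $g\otimes h\otimes x\mapsto ghx$. Substituting these into the formula and using functoriality of $\coind^G_H$ gives $\tilde{\Tr}^G_K=\tilde{\Tr}^G_H\circ\tilde{\Tr}^H_K$. The main obstacle is bookkeeping: verifying that the twist $\omega_{K,G}\cong\res(\omega_{H,G})\otimes\omega_{K,H}$ commutes with the identifications in the untilded version. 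For that step I would pull the $G$-module ${}_G\delta^{-1}$ through $\coind$ via the projection formula, using that the projection formula isomorphism is compatible with transitivity.

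For (3), base change commutes with everything in sight. We have $\ell G_\ell=\ell\otimes_k kG$, and invariants commute with the flat extension $\ell\otimes_k-$ because they are a kernel of a $k$-linear map. Hence ${}_{G_\ell}\delta=\ell\otimes{}_G\delta$, and similarly for $H$ and for $\omega$. Coinduction also commutes with base change: $\ell G\otimes_{\ell H}(\ell\otimes N)\cong\ell\otimes(kG\otimes_H N)$. Under these identifications $t_{H_\ell,G_\ell}=\ell\otimes t_{H,G}$ and the counits correspond, so the formula gives $\Tr^{G_\ell}_{H_\ell}=\ell\otimes\Tr^G_H$.
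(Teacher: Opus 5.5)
Your proposal is correct and follows the same route as the paper. Part (1a) comes from naturality of the counit, (1b) from specializing to $L=\Hom(M,N)$, (2) from the compatibility $\tilde t_{K,G}=\coind^G_H(\tilde t_{K,H})\circ\tilde t_{H,G}$ under $\coind^G_H\coind^H_K\cong\coind^G_K$, and (3) from base change. You also supply details the paper leaves implicit (transitivity of counits, the projection-formula bookkeeping for the untilded version, exactness of $\ell\otimes_k-$ on invariants), and your choice $L'=\Hom(M',N')$ in (1b) is the right one.
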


\begin{proof}
	
	Part \ref{tr1a} follows from the definitions, since the definition of $\Tr$ only uses the first argument of $\Hom$. Part \ref{tr1b} then follows by setting $L= \Hom(M,N)$ and $L'=\Hom(M',\omega_{H,G}N')$.
	
	Part \ref{tr2} is the observation that the composite
	\[ {}_G \delta \xhookrightarrow{\tilde{t}_{H,G}} kG \otimes _H {}_H \delta \xhookrightarrow{\ide_{kG} \otimes \tilde{t}_{H,G}} kG \otimes_{kH}kH \otimes _{kK} {}_K \delta \cong kG \otimes_K {}_K \delta \]
	is just $\tilde{t}_{K,G}$.
	Finally,  part \ref{tr3} is just the observation that everything we have done commutes with base change.
\end{proof}

\begin{corollary} 
If $R$ is a $k$-algebra and $M$ and $M'$ are $R$-modules in a way that commutes with the action of $G$, then the transfer is a map of $R$-modules.
\end{corollary}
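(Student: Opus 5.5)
The plan is to read the corollary as a direct consequence of part \ref{tr1b} of Proposition \ref{pr:properties-tr}. The transfer in question is
\[ \Tr_H^G \colon \Hom_H(M,\omega_{H,G}\otimes M') \to \Hom_G(M,M'). \]
First I make precise the two $R$-module structures. Say $R$ acts on $M$ and $M'$ via $G$-equivariant maps, i.e.\ each $r\in R$ gives $r_M\in \Hom_G(M,M)$ and $r_{M'}\in\Hom_G(M',M')$. On the codomain $\Hom_G(M,M')$, the action is $r\cdot\phi = r_{M'}\circ\phi$. This agrees with $\phi\circ r_M$ when $\phi$ is $R$-linear; if one only wants $R$-linearity on one side, either choice will do. On the domain, $r$ acts by $r\cdot f=(\ide_{\omega_{H,G}}\otimes r_{M'})\circ f$. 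This is again $H$-linear, because $r_{M'}$ commutes with the $G$-action and hence with the $H$-action.

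The main step is to apply Proposition \ref{pr:properties-tr}\ref{tr1b} with $N=N'=M'$, $h=r_{M'}$, and $g=\ide_M$. This gives
\[ r_{M'}\circ\Tr_H^G(f) = \Tr_H^G\bigl((\ide_{\omega_{H,G}}\otimes r_{M'})\circ f\bigr), \]
which is exactly $r\cdot \Tr_H^G(f)=\Tr_H^G(r\cdot f)$. For the action through $M$, I take $g=r_M\in\Hom_G(M,M)$ and $h=\ide$ instead, and get $\Tr^G_H(f)\circ r_M=\Tr^G_H(f\circ r_M)$. So the transfer commutes with precomposition as well.

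Additivity is automatic, since $\Tr_H^G$ is defined as a composite of $k$-linear maps: the adjunction isomorphism followed by $t^*_{H,G}$. The only step needing care is checking that the $R$-action on the domain really lands in $H$-equivariant maps into $\omega_{H,G}\otimes M'$. This holds because $R$ acts trivially on the one-dimensional module $\omega_{H,G}$ and $G$-equivariantly on $M'$, so there is no real obstacle. In the same way, the version $\Tr_H^G\colon {}^H(\omega_{H,G}\otimes L)\to {}^GL$ for an $R$-module $L$ with commuting $G$-action follows from part \ref{tr1a} with $d=r_L$.
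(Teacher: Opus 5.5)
Your proof is correct and is essentially the paper's argument. The corollary is stated without a separate proof immediately after Proposition~\ref{pr:properties-tr}, and it follows, as you say, from the naturality statements in part (1), taking the $G$-equivariant maps given by elements of $R$ as $d$, $g$ or $h$. Your check that $\ide_{\omega_{H,G}}\otimes r_{M'}$ preserves $H$-equivariance is the only detail that needs verifying, and you handle it correctly.
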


\subsection{The Wirthm\"uller isomorphism}\label{sec:wirth}

\begin{theorem}\label{th:wirth} 
	There exists a natural isomorphism of functors
	\[
	W_{H,G} \colon  \ind_H^G(-)\overset{\cong}{\longrightarrow}\coind_H^G(-\otimes\omega_{H,G}^{-1})
	\]
	called the {\em Wirthm\"uller isomorphism}.
\end{theorem}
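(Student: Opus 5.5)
The plan is to reduce the statement, via the projection formula, to an isomorphism of $G$-$H$-bimodules between $\Hom_{kH}(kG,kH)$ and $kG\otimes_{kH}\omega_{H,G}^{-1}$. We then build that bimodule isomorphism by hand from the integrals, using the canonical injection $\tilde t_{H,G}$ of Section \ref{sec:deftransfer}.

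\emph{Reduction.} Since $kG$ is a finitely generated projective (indeed free) $kH$-module, we have natural isomorphisms
\[
\ind_H^G N=\Hom_{kH}(kG,N)\cong \Hom_{kH}(kG,kH)\otimes_{kH}N .
\]
On the other side, $\coind_H^G(N\otimes\omega^{-1}_{H,G})=kG\otimes_{kH}(\omega^{-1}_{H,G}\otimes N)$. It therefore suffices to produce an isomorphism of $(kG,kH)$-bimodules
\[
\Phi\colon kG\otimes_{kH}\omega^{-1}_{H,G}\xrightarrow{\ \cong\ }\Hom_{kH}(kG,kH),
\]
where $\omega^{-1}_{H,G}\otimes -$ is read as a twist of the right $H$-action, using the Hopf structure. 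Naturality in $N$ is then automatic, since the isomorphism is given by tensoring with a fixed bimodule. An alternative route avoids the tensor identification: both functors are exact, and both are adjoints to restriction twisted appropriately. One could then aim to show that $\coind_H^G(-\otimes\omega_{H,G}^{-1})$ is right adjoint to $\res_H^G$, and invoke uniqueness of adjoints. I would keep this as a cross-check.

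\emph{Construction of $\Phi$.} Write $\omega^{-1}_{H,G}={}_G\delta^{-1}\otimes{}_H\delta$. The map $\tilde t_{H,G}$ identifies ${}_G\delta=kG^G$ with a line inside $kG\otimes_H{}_H\delta$. Fix a generator $\lambda_G$ of the left integrals of $kG$ and $\lambda_H$ of those of $kH$; the final result will not depend on these choices, because the twists by $\delta$'s absorb the scalars. Using the Frobenius property of $kG$ and $kH$ relative to $H$, I would define the Frobenius form $\pi\colon kG\to kH$, a $kH$-bimodule map, characterized by
\[
\lambda_G=\sum \pi(\lambda_G\text{-component})\ldots,
\]
or more concretely by the requirement that $x\mapsto\pi(x\,\cdot)$ corresponds to $\lambda_G$ under $\tilde t$. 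Then set $\Phi(g\otimes 1)=\pi(-\,g)$, with the appropriate antipode twist.

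\emph{Verification.} Equivariance should reduce to the statement that $\pi$ is $H$-linear on one side and only twisted-$H$-linear on the other, the twist being exactly the character of $\omega_{H,G}$. Bijectivity I would reduce to a dimension count. Both sides have dimension $\dim kG\cdot\dim kH/\dim kH=\dim kG$, using freeness of $kG$ over $kH$ (Nichols--Zoeller, or the standard result for group schemes \cite{MR2015057}). One then shows injectivity, i.e.\ nondegeneracy of the pairing $(x,g)\mapsto\pi(xg)$; this would follow from nondegeneracy of the Frobenius form of $kG$ composed with that of $kH$.

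\emph{Main obstacle.} The hard step is constructing $\pi$ and checking that its failure of right $H$-linearity is precisely ${}_G\delta|_H\otimes{}_H\delta^{-1}$, with the sides and antipodes correct. I would first try to deduce this from the transitivity of Frobenius forms: the form of $kG$ equals the form of $kH$ composed with $\pi$. The left- and right-module structures of the respective integral spaces, which are ${}_G\delta$ and ${}_H\delta$ by definition, then force the twist. As a sanity check, for finite groups this should recover the standard isomorphism, with $\pi$ equal to projection onto $kH$.
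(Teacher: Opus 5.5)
Your reduction is sound. Since $kG$ is finitely generated projective over $kH$, you have $\ind_H^G N\cong \Hom_{kH}(kG,kH)\otimes_{kH}N$ and $\coind_H^G(N\otimes\omega^{-1}_{H,G})\cong kG\otimes_{kH}(\omega^{-1}_{H,G}\otimes kH)\otimes_{kH}N$. So everything reduces to identifying the $(kG,kH)$-bimodule $\Hom_{kH}(kG,kH)$ with $kG$ with its right $H$-action twisted by the character of $\omega_{H,G}^{\pm 1}$. The dimension count and the injectivity argument also work once a suitable $\pi$ exists.

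The gap is in the step you flag yourself, and the suggested way through it does not work. Your characterizations of $\pi$ conflate two kinds of integrals. $\lambda_G$ is an integral \emph{in} $kG$, which is what defines ${}_G\delta$. A Frobenius form on $kG$ is instead an integral $\phi_G$ of the dual $k[G]=(kG)^*$. The line $\tilde t_{H,G}({}_G\delta)\subseteq kG\otimes_{kH}{}_H\delta$ cannot determine a map $kG\to kH$, and the displayed formula for $\lambda_G$ is not a definition. The workable definition is the one hidden in your ``transitivity'' remark: choose integrals $\phi_G\in k[G]$ and $\phi_H\in k[H]$, and define $\pi$ by $\phi_H(h\,\pi(x))=\phi_G(hx)$ for all $h\in kH$. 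This $\pi$ is left $kH$-linear. Moreover, $\phi_G(xg)=0$ for all $x$ forces $g=0$, which gives injectivity of $g\mapsto \pi(-\,g)$.

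The real missing idea is the twist. Writing $\phi(ab)=\phi(b\,\nu(a))$, one gets $\pi(xh)=\pi(x)\,\nu_H\nu_G^{-1}(h)$, where $\nu_G$ and $\nu_H$ are the Nakayama automorphisms of $kG$ and $kH$. Nothing here is forced ``by definition'' by ${}_G\delta$ and ${}_H\delta$. The forms live in $k[G]$ and $k[H]$, which are commutative and hence unimodular, so their own integral spaces carry no twist at all. Showing that $\nu_G$ is twisting by the modular character of $kG$ is a genuine theorem. It is Radford's formula for the Nakayama automorphism, with $S^2=\mathrm{id}$ because $kG$ is cocommutative. It is essentially the content of the bimodule isomorphism $kG\cong k[G]\otimes{}_G\delta$ that the paper imports from Jantzen I.8.12 Remark 1. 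The paper applies that isomorphism to both $G$ and $H$ to compare $\Hom_{H}(kG,kH)$ with $\Hom_H(k[G],k[H])\cong kG$. Once you supply this lemma, the argument closes: $\nu_G^{\pm1}$ preserves $kH$ because $kH$ is a subcoalgebra, and $\nu_H\nu_G^{-1}$ restricted to $kH$ is twisting by the character of $\res^G_H{}_G\delta\otimes{}_H\delta^{-1}=\omega_{H,G}$, up to inverse and antipode conventions. Your proof then becomes a Frobenius-extension repackaging of the paper's argument rather than an independent route, because the key input is the same.
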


This can be found in \cite[I.8.17]{MR2015057}.  We discuss it further in Section~\ref{wirt}.

\begin{remark}
	We could have used the Wirthm\"uller isomorphism to produce a non-zero map as the composite $k \xrightarrow{\eta_{H,G}} \ind ^G_H (k) \xrightarrow{W_{H,G}} \coind ^G_H (\omega^{-1}_{H,G})$. Since $\dim \Hom_G(k,\coind ^G_H \omega^{-1}_{H,G}) = \dim \Hom_G(k, \ind ^G_H k) = \dim \Hom_H(k,k)=1$, this is equal to $t_{H,G}$ up to a non-zero scalar multiple. 
\end{remark}

\begin{lemma}\label{l:P1}
	The $H$-homomorphism $\res ^G_H (t^G_H)$ is a split injection.
\end{lemma}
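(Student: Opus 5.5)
The plan is to reduce the statement to a triangle identity for the adjunction $\res^G_H \dashv \ind^G_H$, using the Wirthm\"uller isomorphism and the Remark following Theorem~\ref{th:wirth}. Since the source of $t_{H,G}$ is the one-dimensional module $k$, splitting it only requires an $H$-map $\coind^G_H \omega^{-1}_{H,G} \to k$ whose composite with $t_{H,G}$ is a non-zero scalar. We do not need to decompose $\coind^G_H \omega^{-1}_{H,G}$ as an $H$-module. This matters because a Mackey-type argument showing $kH$ is a bimodule summand of $kG$ is not available in this generality.

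First we check that $t_{H,G}\neq 0$. By construction $\tilde t_{H,G}\colon {}_G\delta \to kG\otimes_H {}_H\delta$ is the inclusion $kG^G\subseteq kG^H$ followed by an isomorphism, so it is injective. Since ${}_G\delta\neq 0$, $\tilde t_{H,G}$ is non-zero, and tensoring with the invertible module ${}_G\delta^{-1}$ keeps $t_{H,G}$ non-zero.

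Next, consider the composite
\[ s= W_{H,G}(k)\circ \eta_{H,G}(k)\colon k \to \ind^G_H k \to \coind^G_H \omega^{-1}_{H,G}. \]
By the Remark, $\Hom_G(k,\coind^G_H\omega^{-1}_{H,G})$ is one-dimensional. Both $s$ and $t_{H,G}$ are non-zero elements of this space, so $t_{H,G}=\lambda s$ for some $\lambda\in k^\times$.

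Now apply $\res^G_H$. The triangle identity for $\res^G_H\dashv\ind^G_H$ gives
\[ \epsilon_{H,G}(\res^G_H k)\circ \res^G_H(\eta_{H,G}(k)) = \ide_k , \]
so $\epsilon_{H,G}(k)\colon \res^G_H\ind^G_H k\to k$ is an $H$-linear retraction of $\res^G_H(\eta_{H,G}(k))$. Because $W_{H,G}(k)$ is a $G$-isomorphism, its restriction is an $H$-isomorphism. Hence
\[ r=\lambda^{-1}\,\epsilon_{H,G}(k)\circ \res^G_H\bigl(W_{H,G}(k)\bigr)^{-1} \]
is an $H$-map satisfying $r\circ\res^G_H(t_{H,G})=\ide_k$, which proves the lemma.

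The only step needing care is the non-vanishing of $t_{H,G}$, since it is what allows the comparison with $s$ up to a unit. This follows directly from the injectivity of $\tilde t_{H,G}$. The rest is formal, using Theorem~\ref{th:wirth} and the dimension count in the Remark.
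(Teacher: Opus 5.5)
Your proof is correct and follows the paper's argument. The triangle identity for $\res^G_H \dashv \ind^G_H$ splits $\res^G_H(\eta_{H,G}(k))$, and the Remark after Theorem~\ref{th:wirth} identifies $t_{H,G}$ with $W_{H,G}(k)\circ\eta_{H,G}(k)$ up to a non-zero scalar. You also make explicit two points the paper leaves implicit, namely that $t_{H,G}\neq 0$ (via injectivity of $\tilde t_{H,G}$) and the form of the retraction $r$.
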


\begin{proof}
One of the unit-counit identities for the adjunction between $\res^G_H$ and $\ind^G_H$ when applied to $k$ gives $\ide_{\res^G_H}=\epsilon_{H,G}(\res^G_H(k)) \circ \res^G_H(\eta_{H,G}(k))$. This shows that $\epsilon_{H,G}(\res^G_H)$ is split and thus $\res ^G_H (t^G_H)$ is split, by the remark above.
\end{proof}

We can ask whether other choices of 1-dimensional representations can also yield a map like $t_{H,G}$. We say that ${}_G \hat{\delta}$ and ${}_H \hat{\delta}$ are compatible if there exists a non-zero map ${}_G \hat{\delta} \hookrightarrow \coind^G_H {}_H \hat{\delta}$. Tensoring with ${}_G \hat{\delta}^{-1}$ gives a non-zero map $ k \hookrightarrow \coind^G_H {}_G \hat{\delta}^{-1} {}_H \hat{\delta}$. Set $\hat{\omega}_{H,G} = {}_G \hat{\delta}^{-1}{}_H \hat{\delta}$. We now have a non-zero map $k \hookrightarrow \coind^G_H \hat{\omega}_{H,G} \cong  \ind^G_H \omega^{-1}_{H,G} \hat{\omega}_{H,G}$ and, by adjunction, a non-zero map of $H$-modules $k \hookrightarrow   \omega^{-1}_{H,G} \hat{\omega}_{H,G}$. It follows that $\hat{\omega}_{H,G} \cong \omega_{H,G}$, so $\omega_{H,G}$ does not vary, up to isomorphism. A variation on this argument also shows that the $\omega_{H,G}$ from the Wirthm\"uller isomorphism is isomorphic to the one we originally constructed.

Any pair  ${}_G \hat{\delta}$, ${}_H \hat{\delta}$ can be used, as long as ${}_G \hat{\delta}^{-1}{}_H \hat{\delta} \cong \omega_{H,G}$. We see that any ${}_G \hat{\delta}$ is possible by setting ${}_H \hat{\delta}=   \omega^{-1}_{H,G}  {}_G \hat{\delta} $.  However, not all ${}_H \hat{\delta}$ might be possible, not even ${}_H \hat{\delta} \cong k$ (see  \cite[II 3.4]{MR2015057}).

\begin{lemma}
	\label{inf}
	
	Suppose that we have closed subgroup schemes $N \leq H \leq G$, with $N$ normal in $G$, and we set $\bar{G} = G/N$, $\bar{H}=H/N$. Then $\omega_{H,G} = \infl^H_{\bar{H}} \omega_{\bar{H}, \bar{G}}$ and $t_{H,G} = \infl^G_{\bar{G}} t_{\bar{G},\bar{H}}$ up to a non-zero scalar. 

\end{lemma}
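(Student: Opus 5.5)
The plan has two stages. First identify $\omega_{H,G}$ with the inflation of $\omega_{\bar H,\bar G}$. Then use the one-dimensionality of the relevant invariant space to compare the two maps $t$.

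For the first part, I would not try to compute the integrals ${}_G\delta$ and ${}_H\delta$ directly in terms of those of $\bar G$ and $\bar H$. Instead I would use the characterization of $\omega_{H,G}$ via the Wirthm\"uller isomorphism, as in the remark following Lemma~\ref{l:P1}: up to isomorphism, $\omega_{H,G}$ is the unique $1$-dimensional $H$-module with $\ind^G_H(-)\cong \coind^G_H(-\otimes\omega_{H,G}^{-1})$ naturally. The key observation is that, since $N\le H$ is normal in $G$, we have
\[
kG\otimes_{kH} X\cong k\bar G\otimes_{k\bar H} X,\qquad \Hom_{kH}(kG,X)\cong \Hom_{k\bar H}(k\bar G,X)
\]
for any $\bar H$-module $X$ regarded as an $H$-module by inflation. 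This holds because $kG\otimes_{kH}k\bar H \cong kG\otimes_{kN}k \cong k\bar G$, using the standard identification $k\bar G = kG\otimes_{kN} k$; the statement for $\Hom$ is dual. Hence induction and coinduction commute with inflation.

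Apply Theorem~\ref{th:wirth} for $\bar H\le \bar G$ and inflate. This gives $\ind^G_H X\cong \coind^G_H(X\otimes \infl\,\omega^{-1}_{\bar H,\bar G})$ for inflated $X$. Taking $X=k$ and comparing with Theorem~\ref{th:wirth} for $H\le G$ gives
\[
\coind^G_H\omega_{H,G}^{-1}\cong \coind^G_H \infl\,\omega_{\bar H,\bar G}^{-1}.
\]
Now use adjunction as in the compatibility argument: a nonzero $G$-map $k\to\coind^G_H\infl\,\omega^{-1}_{\bar H,\bar G}\cong \ind^G_H(\omega_{H,G}^{-1}\infl\,\omega_{\bar H,\bar G}^{-1})$ yields a nonzero $H$-map from $k$ to a $1$-dimensional module. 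This forces $\omega_{H,G}\cong\infl\,\omega_{\bar H,\bar G}$.

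To obtain the equality canonically rather than up to isomorphism, I would identify the integrals explicitly. The image of ${}_G\delta$ in $kG\otimes_{kN}k=k\bar G$ is $G$-invariant, hence lies in $k\bar G^{\bar G}={}_{\bar G}\delta$. It is nonzero, because a left integral of $kG$ factors as $\Lambda_N\cdot(\text{lift of }\Lambda_{\bar G})$. This gives ${}_G\delta\cong\infl\,{}_{\bar G}\delta\otimes(\text{something from }N)$, and the same holds for $H$; the $N$-contributions cancel in $\omega_{H,G}$. This is the step I expect to be the main obstacle: a left integral of $kG$ should be $\Lambda_N\tilde\Lambda$ with $\tilde\Lambda$ mapping to $\Lambda_{\bar G}$, and the $G$-character of $\Lambda_N$ (conjugation action on $kN^N$) must be handled. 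Since the lemma is only claimed up to isomorphism and scalar, I would fall back on the adjunction argument above.

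For $t$: both $t_{H,G}$ and $\infl\,t_{\bar H,\bar G}$ are nonzero $G$-maps $k\to \coind^G_H\omega^{-1}_{H,G}\cong\infl\,\coind^{\bar G}_{\bar H}\omega^{-1}_{\bar H,\bar G}$. The space of such maps is $\Hom_G(k,\ind^G_Hk)\cong\Hom_H(k,k)$, which is $1$-dimensional, exactly as in the remark after Theorem~\ref{th:wirth}. So the two maps agree up to a nonzero scalar.
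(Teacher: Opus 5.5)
Your proof is correct and is essentially the paper's. The paper obtains the nonzero map $k\to\coind^G_H\infl^H_{\bar H}\omega^{-1}_{\bar H,\bar G}$ directly as the inflation of $t_{\bar H,\bar G}$, which uses that coinduction commutes with inflation. It then observes that $k$ and $\infl^H_{\bar H}\omega^{-1}_{\bar H,\bar G}$ form a compatible pair and concludes by the same Wirthm\"uller-plus-adjunction argument you give. For the second part it uses the same one-dimensionality of $\Hom_G(k,\ind^G_H k)$. Getting that nonzero map from the inflated Wirthm\"uller isomorphism for $\bar H\le\bar G$, as you do, also works, but it is a detour: it additionally requires that $\ind$ commutes with inflation.

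There are two small corrections.

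First, a sign slip. By Theorem~\ref{th:wirth}, $\coind^G_H Y\cong\ind^G_H(Y\otimes\omega_{H,G})$, so the module should be $\ind^G_H(\omega_{H,G}\otimes\infl^H_{\bar H}\omega^{-1}_{\bar H,\bar G})$. With $\omega^{-1}_{H,G}$ as you wrote it, the adjunction step would give $\omega_{H,G}\cong\infl^H_{\bar H}\omega^{-1}_{\bar H,\bar G}$ instead of the claimed isomorphism.

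Second, your speculative paragraph claims that the image of ${}_G\delta$ in $kG\otimes_{kN}k=k\bar G$ is nonzero, and this is false in general. The image of $\Lambda_N\tilde\Lambda$ is the augmentation of $\Lambda_N$ times the image of $\tilde\Lambda$, and that augmentation vanishes unless $kN$ is semisimple. For a finite group, $\Sigma_G\mapsto|N|\Sigma_{\bar G}$. This is why the paper remarks that the $\delta$'s need not correspond under inflation. The lemma is only an isomorphism statement, so your fallback argument is the right proof.
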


Here $\infl$ denotes the usual inflation map.

\begin{proof}
The first part is because $t_{\bar{H},\bar{G}}$ inflates to a non-zero map $k \hookrightarrow  \coind^G_H  \infl^H_{\bar{H}} \omega_{\bar{H}, \bar{G}}^{-1}$ hence $k$ and $\infl^H_{\bar{H}} \omega_{\bar{H}, \bar{G}}^{-1}$ qualify as ${}_G \hat{\delta}$ and ${}_H \hat{\delta}$ in the discussion above. This also proves the second part. 
\end{proof}

The $\delta$ might not correspond under inflation.

\begin{corollary}\label{cor:extra} In the circumstances of the lemma and in the case that $G^o=H^o$, we can take $N=G^o$. Then $\bar{G}$ and $\bar{H}$ are \'{e}tale, so $\omega_{\bar{H}, \bar{G}}$ is trivial with a canonical generator; this inflates to $\omega_{H, G}$.
	\end{corollary}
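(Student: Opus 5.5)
The plan is to apply Lemma~\ref{inf} with the normal subgroup scheme $N=G^o$, the identity component (the connected component of the identity), which is a normal closed subgroup scheme of $G$.

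First I check that the hypotheses of the lemma hold. Since $G^o=H^o\leq H\leq G$, we do have $N\leq H\leq G$ with $N$ normal in $G$. Set $\bar G=G/G^o$ and $\bar H=H/G^o=H/H^o$.

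Next I show that both quotients are étale. The quotient of a finite group scheme by its identity component is the étale group scheme $\pi_0(G)$. The same holds for $H$, because $G^o=H^o$. Étale subgroup schemes and quotients are compatible here, and $\bar H$ is a closed subgroup scheme of $\bar G$.

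Then I show that $\omega_{\bar H,\bar G}$ is trivial with a canonical generator. By Definition~\ref{def:delta}(3) and (4), an étale group scheme has a canonical basis element $\Sigma$ of its integral space. This element is Galois invariant, so we can work over $\bar k$ if needed, and there $\bar G$ and $\bar H$ are honest finite groups. Over $\bar k$, left and right multiplication by any $g$ fixes $\Sigma_{\bar G}$, so ${}_{\bar G}\delta$ is trivial with generator $\Sigma_{\bar G}$. Likewise ${}_{\bar H}\delta$ is trivial with generator $\Sigma_{\bar H}$. Hence
\[
\omega_{\bar H,\bar G}=\res({}_{\bar G}\delta)\otimes{}_{\bar H}\delta^{-1}
\]
is trivial, with canonical generator $\Sigma_{\bar G}\otimes\Sigma_{\bar H}^{*}$, where $\Sigma_{\bar H}^*$ is the dual basis vector. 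Galois invariance of this generator gives descent back to $k$, by Proposition~\ref{pr:properties-tr}(\ref{tr3}) style base change.

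Finally, Lemma~\ref{inf} gives $\omega_{H,G}=\infl^H_{\bar H}\omega_{\bar H,\bar G}$. So the canonical generator inflates to a generator of $\omega_{H,G}$.

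The only point needing care is what "this inflates to $\omega_{H,G}$" means. Lemma~\ref{inf} identifies the two modules only up to the choice of compatible pair: the compatible-pair discussion shows that any $\hat\delta$'s with $\hat\omega\cong\omega$ work, and $t$ agrees up to a non-zero scalar. I would phrase the corollary as saying that the inflated trivial module, with its generator, serves as a valid choice of $\omega_{H,G}$. Concretely, the inflation of $t_{\bar H,\bar G}$ provides a compatible pair ${}_G\hat\delta=k$ and ${}_H\hat\delta=\infl\,\omega_{\bar H,\bar G}^{-1}$. This is the main subtlety rather than a real obstacle.
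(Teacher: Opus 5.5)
Your proposal is correct and is essentially the paper's own implicit argument: apply Lemma~\ref{inf} with $N=G^o$, note that $G/G^o$ and $H/H^o$ are \'etale so that the elements $\Sigma_{\bar G}$, $\Sigma_{\bar H}$ of Definition~\ref{def:delta} make $\omega_{\bar H,\bar G}$ trivial with a canonical generator, and then inflate via the compatible pair ${}_G\hat\delta=k$, ${}_H\hat\delta=\infl^H_{\bar H}\omega^{-1}_{\bar H,\bar G}$, exactly as in the proof of the lemma. The one small inaccuracy is your descent remark: no base-change argument is needed (and Proposition~\ref{pr:properties-tr} is not the relevant reference), because Definition~\ref{def:delta} already states that $\Sigma$ is Galois invariant and so is a canonical basis element over $k$.
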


\begin{corollary} In the circumstances of the lemma, $\res ^H_N \omega_{H,G}$ is trivial and any compatible pair ${}_G \hat{\delta}$ and ${}_H \hat{\delta}$ are isomorphic on restriction to $N$. 
	\end{corollary}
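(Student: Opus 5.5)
Both claims follow from Lemma~\ref{inf}, which identifies $\omega_{H,G}$ with $\infl^H_{\bar{H}}\,\omega_{\bar{H},\bar{G}}$.

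For the first claim, note that the inflation $\infl^H_{\bar{H}} V$ of any $\bar{H}$-module $V$ is, by definition, the $H$-module obtained by pulling back along the quotient $H \to \bar{H} = H/N$. Since $N$ is the kernel of this quotient, $N$ acts trivially on $V$. In terms of group algebras, the structure map $kN \to kH \to k\bar{H}$ factors through the counit $kN \to k$. So $\res^H_N \infl^H_{\bar{H}} V$ is a direct sum of copies of the trivial module. Applying this with $V=\omega_{\bar{H},\bar{G}}$, which is one-dimensional, gives $\res^H_N \omega_{H,G}\cong k$.

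For the second claim, take a compatible pair ${}_G\hat{\delta}$, ${}_H\hat{\delta}$. The discussion preceding Lemma~\ref{inf} shows that compatibility forces
\[
{}_G\hat{\delta}^{-1}\otimes{}_H\hat{\delta}\cong\omega_{H,G}
\]
as $H$-modules. Restricting to $N$ and using the first claim gives
\[
\res^G_N\bigl({}_G\hat{\delta}^{-1}\bigr)\otimes\res^H_N\bigl({}_H\hat{\delta}\bigr)\cong k.
\]
Here we use that restriction is compatible with tensor products and with the antipode-dual $L^{-1}$, and that $\res^H_N\res^G_H=\res^G_N$ by transitivity. Tensoring both sides with $\res^G_N{}_G\hat{\delta}$, and using $L\otimes L^{-1}\cong k$ for one-dimensional $L$, yields $\res^H_N{}_H\hat{\delta}\cong\res^G_N{}_G\hat{\delta}$.

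There is no real obstacle. The only point needing care is that the isomorphism $\hat{\omega}_{H,G}\cong\omega_{H,G}$ is one of $H$-modules, and that it is obtained from the adjunction argument given before Lemma~\ref{inf}. That argument is valid for any compatible pair, so it applies here, and the corollary is immediate.
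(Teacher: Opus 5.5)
Your argument is correct and is the one the paper intends; the paper gives no separate proof and treats the corollary as immediate from Lemma~\ref{inf} and the preceding discussion of compatible pairs. Inflation makes $\res^H_N\omega_{H,G}$ trivial, and restricting the relation between ${}_G\hat{\delta}^{-1}\otimes{}_H\hat{\delta}$ and $\omega_{H,G}$ to $N$ gives the second claim. One small point: tracking exponents, compatibility actually forces ${}_G\hat{\delta}^{-1}\otimes{}_H\hat{\delta}\cong\omega_{H,G}^{-1}$ rather than $\omega_{H,G}$, as the canonical pair already shows (${}_G\delta^{-1}\otimes{}_H\delta\cong\omega_{H,G}^{-1}$ by definition), so this is a sign slip inherited from the paper's discussion. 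Since $\res^H_N\omega_{H,G}^{\pm 1}$ is trivial either way, your conclusion is unaffected.
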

	
	\begin{corollary}
		In the circumstances of the lemma, $\Tr^G_H \circ \infl^H_{\bar{H}} = \infl^G_{\bar{G}} \circ \Tr^{\bar{G}}_{\bar{H}}$ up to non-zero scalar multiple.
	\end{corollary}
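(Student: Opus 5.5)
We must show that $\Tr^G_H \circ \infl^H_{\bar{H}} = \infl^G_{\bar{G}} \circ \Tr^{\bar{G}}_{\bar{H}}$ up to a non-zero scalar, as maps
\[ {}^{\bar H}(\omega_{\bar H,\bar G}\otimes L)\rightarrow {}^G L, \]
where $L$ is a $\bar G$-module. Lemma~\ref{inf} identifies $\omega_{H,G}$ with $\infl^H_{\bar H}\omega_{\bar H,\bar G}$, so both sides have the same domain and codomain.

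The plan is to use the formula from Definition~\ref{def:transfer}, $\Tr^G_H(f)=\epsilon_{G,H}(L)\circ(\ide_{kG}\otimes f)\circ t_{H,G}$, and compare the three ingredients with their $\bar G$ counterparts. The key step is to use the natural map of coinductions
\[ \pi\colon \coind^G_H \infl^H_{\bar H} X = kG\otimes_{kH} X \longrightarrow k\bar G\otimes_{k\bar H} X = \infl^G_{\bar G}\coind^{\bar G}_{\bar H} X, \]
induced by the Hopf surjection $kG\to k\bar G$. It is well defined because $N$ acts trivially on $X$.

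With $\pi$ in hand, three compatibilities need checking.
\begin{enumerate}
\item Naturality: $\pi\circ(\ide_{kG}\otimes f)=(\ide_{k\bar G}\otimes f)\circ\pi$.
\item Counit: $\epsilon_{\bar G,\bar H}(L)\circ\pi=\epsilon_{G,H}(L)$, since both send $g\otimes x\mapsto gx$ and $N$ acts trivially on $L$.
\item Transfer elements: $\pi\circ t_{H,G}$ is a non-zero scalar multiple of $\infl^G_{\bar G} t_{\bar H,\bar G}$.
\end{enumerate}
Chaining the first two with the third gives $\Tr^G_H(\infl f)=\epsilon_{\bar G,\bar H}\circ(\ide\otimes f)\circ\pi\circ t_{H,G}$, which is a scalar multiple of $\infl\Tr^{\bar G}_{\bar H}(f)$.

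For the third compatibility, both $\pi\circ t_{H,G}$ and $\infl t_{\bar H,\bar G}$ lie in $\Hom_G(k,\infl\coind^{\bar G}_{\bar H}\omega^{-1}_{\bar H,\bar G})$. This space has dimension $\dim\Hom_{\bar G}(k,\ind^{\bar G}_{\bar H}k)=1$ by the Wirthm\"uller isomorphism (Theorem~\ref{th:wirth}). So it suffices to show $\pi\circ t_{H,G}\neq 0$.

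That non-vanishing is the main obstacle. Lemma~\ref{inf} says $t_{H,G}$ agrees with the inflation of $t_{\bar H,\bar G}$ up to scalar, but only after identifying $\coind^G_H\infl X$ with $\infl\coind^{\bar G}_{\bar H}X$. That identification fails in general: $kG\otimes_{kH}X$ and $k\bar G\otimes_{k\bar H}X$ both have dimension $\dim k[G/H]\cdot\dim X$, and $\pi$ is surjective, hence an isomorphism. So in fact the dimensions agree and $\pi$ is a $G$-isomorphism. Given that, nonvanishing is immediate because $t_{H,G}$ is injective. To justify the dimension count I would use $\dim kG=\dim kN\cdot\dim k\bar G$ and $\dim kH=\dim kN\cdot\dim k\bar H$, together with freeness of $kG$ over $kH$.

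Once $\pi$ is known to be an isomorphism, the remaining verifications are routine diagram chases.
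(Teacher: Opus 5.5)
Your proposal is correct and is essentially the paper's argument made explicit. The paper treats the corollary as immediate from Lemma~\ref{inf} and the formula $\Tr^G_H(f)=\epsilon_{G,H}(L)\circ(\ide_{kG}\otimes f)\circ t_{H,G}$, tacitly using the identification $\coind^G_H\infl^H_{\bar H}\cong\infl^G_{\bar G}\coind^{\bar G}_{\bar H}$ that your map $\pi$ supplies. Your one-dimensionality argument for $\pi\circ t_{H,G}$ is the same reasoning that underlies the second half of that lemma.

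Two small fixes. Delete the sentence asserting that this identification ``fails in general'', since your own count immediately shows $\pi$ is an isomorphism. Also, that count needs freeness of $k\bar G$ over $k\bar H$, not only of $kG$ over $kH$.
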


\subsection{Relative projectivity}\label{sec:Higman}
Throughout this section, $\mathcal{C}$ is a collection of closed subgroup schemes of $G$; it might be infinite.

\begin{definition}\label{def:relproj}
A $G$-module $M$ is said to be {\em projective relative to} $\mathcal{C}$ if, for any maps of $G$-modules $q \! : U \rightarrow V$ and $r \! : M \rightarrow V$ such that, for any $H \in \mathcal{C}$, the map $r$ lifts to a map of $H$-modules $r_H \! : M \rightarrow U$ such that $r=qr_H$, it follows that $q$ lifts to a map of $G$-modules $s \! : M \rightarrow U$ such that $r=qs$.  
\end{definition}

 Equivalent characterizations of relative projectivity first appeared in work of Higman \cite{MR67895}, \cite{MR87671} and Ikeda \cite{MR55326} and were generalized by Morita \cite{MR190183}, Chouinard \cite{MR585205} and Brou\'e \cite{MR2526081}. The proof of the following version of Higman's Criterion is essentially a synthesis of arguments from the above sources, whose treatment applies to collections consisting of a single subgroup scheme, and
 \cite[3.5.8]{MR2266036}, which allows for arbitrary collections but only in the context of constant group schemes.
 
 \begin{lemma}\label{l:P2} If $H \leq G$ and the $G$-module $M$ is a summand of $\ind^G_H(N)$ or $\coind^G_H(N)$ for some $H$-module $N$, then $\Tr _H^G  \! : \Hom_H( \omega^{-1}_{H,G}, M) \rightarrow \Hom_G(k,M)$ is surjective.
 	\end{lemma}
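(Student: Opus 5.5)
First reduce to the case where $M$ itself equals $\ind^G_H(N)$. By Theorem~\ref{th:wirth}, $\ind^G_H(N)\cong\coind^G_H(N\otimes\omega^{-1}_{H,G})$, so a summand of an induced module is a summand of a coinduced module, and conversely. Next, surjectivity passes to summands. Let $M$ be a summand of $X$, with $\iota\colon M\to X$ and $\pi\colon X\to M$ maps of $G$-modules satisfying $\pi\iota=\ide_M$. Take $u\in\Hom_G(k,M)$ and suppose $\iota u=\Tr^G_H(s)$ for some $s$. Proposition~\ref{pr:properties-tr}(\ref{tr1a}) then gives $u=\pi\iota u=\Tr^G_H(\pi\circ s)$. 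So it suffices to prove that $\Tr^G_H\colon\Hom_H(\omega^{-1}_{H,G},\ind^G_H N)\to\Hom_G(k,\ind^G_H N)$ is surjective for every $H$-module $N$.

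For this case, use Lemma~\ref{l:P1}: $\res^G_H(t_{H,G})\colon k\to\res^G_H\coind^G_H\omega^{-1}_{H,G}$ has an $H$-linear retraction, call it $\rho$. Write $C=\coind^G_H\omega^{-1}_{H,G}$. The transfer is the composite
\[
\Hom_H(\omega^{-1}_{H,G},L)\cong\Hom_G(C,L)\xrightarrow{t^*}\Hom_G(k,L).
\]
Take $L=\ind^G_H N$ and rewrite both sides by the induction adjunction. This gives $\Hom_G(C,\ind^G_H N)\cong\Hom_H(\res^G_H C,N)$ and $\Hom_G(k,\ind^G_H N)\cong\Hom_H(k,N)$. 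By naturality of the adjunction in the first variable, $t^*$ becomes precomposition with $\res^G_H(t_{H,G})$, that is,
\[
\Hom_H(\res^G_H C,N)\to\Hom_H(k,N),\qquad \phi\mapsto\phi\circ\res^G_H(t_{H,G}).
\]
Given $v\in\Hom_H(k,N)$, the map $\phi=v\circ\rho$ satisfies $\phi\circ\res(t)=v$. Hence $t^*$ is surjective, and since the first arrow is an isomorphism, $\Tr^G_H$ is surjective.

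The step needing most care is the identification of $t^*$ with precomposition by $\res(t)$. This is just naturality of $\Hom_G(-,\ind^G_H N)\cong\Hom_H(\res^G_H -,N)$ applied to the $G$-map $t_{H,G}\colon k\to C$, and only that naturality is used. The argument does not depend on whether Lemma~\ref{l:P1} splits $t_{H,G}$ itself or a nonzero scalar multiple of it, since a retraction of a nonzero multiple rescales to a retraction of $t_{H,G}$.
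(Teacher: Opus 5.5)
Your proof is correct and follows the same route as the paper. You use the Wirthm\"uller isomorphism and naturality of the transfer (Proposition~\ref{pr:properties-tr}) to reduce to $M=\ind^G_H(N)$, then identify $t^*_{H,G}$ via the adjunction with precomposition by $\res^G_H(t_{H,G})$, which is split by Lemma~\ref{l:P1}; your direct appeal to naturality of $\Hom_G(-,\ind^G_H N)\cong\Hom_H(\res^G_H -,N)$ is a cleaner statement of the paper's ``tensor-Hom identity'' step.
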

 	
 	\begin{proof}
 		By the Wirthm\"{u}ller isomorphism, we only need to check the case with $\ind^G_H(N)$. By naturality of the transfer, it suffices to consider the case $M=\ind^G_H(N)$.
 		
 		We want $\Hom_G(\coind^G_H( \omega ^{-1}),\ind^G_H(N)) \xrightarrow{(t^G_H)^*} \Hom_G(k, \ind^G_H(N))$ to be onto. By a version of the tensor-Hom identity, this is $I^G_H$ applied to $\Hom_H(\res^G_H \coind^G_H( \omega ^{-1}),N) \xrightarrow{(t^G_H)^*} \Hom_H(k, N)$, which is split by Lemma~\ref{l:P1}.
 	\end{proof}

\begin{proposition}[Higman's Criterion]\label{prop:Higman}
Let $\mathcal{C}$ be a collection of closed subgroup schemes of $G$. The following conditions on a $G$-module $M$ are equivalent.
\begin{enumerate}
\item $M$ is isomorphic to a direct summand of $\bigoplus_{H\in \mathcal{C}}\coind_H^G (M)$.
\item $M$ is isomorphic to a direct summand of $\bigoplus_{H\in \mathcal{C}}\coind_H^G(N_H)$, for some $H$-modules $N_H$.
\item There exists a collection $\{f_H\in {\rm Hom}_H(M\otimes\omega_{H,G}^{-1},M)\}_{H\in\mathcal{C}}$ such that
\begin{enumerate}
\item  For any $m\in M$, ${\rm Tr}_{H,\lambda_H}^G\left(f_H\right)(m)= 0$ for all but finitely many $H\in\mathcal{C}$.
\item $\sum_{H\in\mathcal{C}}{\rm Tr}_{H}^G\left(f_H\right)(m)=m$ for all $m\in M$.
\end{enumerate}  
\item $M$ is projective relative to $\mathcal{C}$.
\end{enumerate}
\end{proposition}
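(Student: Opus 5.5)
I will prove the cycle of implications (1)$\Rightarrow$(2)$\Rightarrow$(3)$\Rightarrow$(4)$\Rightarrow$(1). The step (1)$\Rightarrow$(2) is trivial: take $N_H=\res^G_H M$.

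For (2)$\Rightarrow$(3), let $\iota\colon M\to\bigoplus_H \coind^G_H N_H$ and $\pi$ be maps with $\pi\iota=\ide_M$, and put $e_H=\pi\circ\iota_H\circ p_H\circ\iota$. Here $p_H,\iota_H$ are the projection onto and inclusion of the $H$-th summand. Each $e_H$ is a $G$-endomorphism of $M$ that factors through $\coind^G_H N_H$. For each $m$ only finitely many $e_H(m)$ are nonzero, since $\iota(m)$ lies in a finite direct sum, and $\sum_H e_H=\ide_M$. Apply Lemma~\ref{l:P2} with $L=\Hom(M,M)$; this is a summand of the $G$-module $\Hom(M,\coind^G_H N_H)$, and by the projection formula and the Wirthm\"uller isomorphism that module is again (co)induced from $H$. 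This gives $f'_H\in {}^H(\omega_{H,G}\otimes \Hom(M,\coind^G_H N_H))$ with $\Tr^G_H(f'_H)=\iota_H p_H\iota$. Then $f_H=(\ide\otimes\pi)\circ f'_H$ satisfies $\Tr^G_H(f_H)=e_H$ by Proposition~\ref{pr:properties-tr}\eqref{tr1b}, which gives (a) and (b).

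For (3)$\Rightarrow$(4), let $q\colon U\to V$ and $r\colon M\to V$ be given, together with $H$-lifts $r_H$ with $qr_H=r$. Define
\[
s=\sum_H \Tr^G_H\bigl((\ide_{\omega}\otimes r_H)\circ f_H\bigr).
\]
The key point is that the transfer is defined for arbitrary $H$-maps out of $M\otimes\omega^{-1}_{H,G}$ once we precompose, using the formula $\Tr^G_H(f)=\epsilon\circ(\ide\otimes f)\circ t$ and the naturality in \eqref{tr1a}. This gives a $G$-map, and the sum makes sense because of the same local finiteness (a), applied after pushing forward along $r_H$. It remains to check $qs=\sum_H\Tr^G_H((\ide\otimes qr_H)f_H)=\sum_H\Tr^G_H((\ide\otimes r)f_H)$, using that $q$ is a $G$-map and \eqref{tr1b}. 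Since $r$ is a $G$-map, \eqref{tr1b} turns this into $r\circ\sum_H\Tr^G_H(f_H)=r$.

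For (4)$\Rightarrow$(1), take $U=\bigoplus_H\coind^G_H\res^G_H M$, $V=M$, $r=\ide_M$ and $q=\sum_H\epsilon_{G,H}(M)$. On each $H\in\mathcal C$ the map $q$ splits via the unit $\eta_{G,H}$ applied to $M$ followed by the inclusion of the $H$-th summand; this uses the unit–counit identity, as in Lemma~\ref{l:P1}. Relative projectivity then gives a $G$-splitting $s$ of $q$, so $M$ is a summand of $U$.

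The main obstacle is the locally finite bookkeeping in (2)$\Rightarrow$(3) and (3)$\Rightarrow$(4) for infinite $\mathcal C$. One must verify that transferred maps vanish on $m$ whenever the original components do. I would handle this by working elementwise, using the explicit formula $\Tr(f)(m)=\sum g_i f(\ldots)$ through $t_{H,G}$.
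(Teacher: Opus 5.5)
The overall structure of your proof is essentially the paper's, and (1)$\Rightarrow$(2), (4)$\Rightarrow$(1) and the computation $qs=r$ are fine. There is, however, a genuine gap in (3)$\Rightarrow$(4).

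\textbf{The gap.} You claim that $s=\sum_H\Tr^G_H((\ide\otimes r_H)\circ f_H)$ is locally finite ``because of (a), applied after pushing forward along $r_H$''. This does not follow. Condition (a) concerns $\Tr^G_H(f_H)(m)$, but $r_H$ is only $H$-linear. Hence $\Tr^G_H(r_H\circ f_H)\neq r_H\circ\Tr^G_H(f_H)$ in general, and the vanishing of $\Tr^G_H(f_H)(m)$ says nothing about $\Tr^G_H(r_H\circ f_H)(m)$. For instance, take $p=2$, $G=\mathbb{Z}/2=\{e,g\}$, $H=\{e\}$, $M=V=k$, $U=kG$, $q$ the augmentation, $f=\ide_k$ and $r_H(1)=e$. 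Then $\Tr^G_H(f)=0$, but $\Tr^G_H(r_H\circ f)(1)=e+g\neq 0$. So the elementwise check you defer to the end cannot be done from (a) alone when $\mathcal{C}$ is infinite.

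\textbf{How to close it.} The paper's proof writes the same sum without comment, so this is a weakness in the formulation of (a) rather than something your route introduced. It is harmless if $\mathcal{C}$ is finite or $M$ is finite dimensional. In that case (a) forces $\Tr^G_H(f_H)=0$ for almost all $H$; those $f_H$ can be replaced by $0$, and the sum becomes finite.

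In general you need a stronger condition: for each $m$, $f_H$ vanishes on $kG m\otimes\omega^{-1}_{H,G}$ for almost all $H$. We have $\Tr^G_H(f)=\epsilon\circ(\ide_{kG}\otimes f)\circ t_{H,G}$, and $G$ acts on $\Hom(M,U)$ by conjugation. Consequently $\Tr^G_H(f)(m)$ depends only on the restriction of $f$ to $kGm\otimes\omega^{-1}_{H,G}$. The stronger condition therefore passes to $r_H\circ f_H$ and makes $s$ well defined.

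The paper's construction in (2)$\Rightarrow$(3) has this property. It takes $f_H=\pi\iota_H\circ g_H\circ(p_H\iota\otimes\ide)$ with $\Tr^G_H(g_H)=\ide_{\coind^G_H N_H}$, and $\iota(kGm)$ lies in finitely many summands. Your arbitrary preimage $f'_H$ of $p_H\iota$ need not have the property, so you should take $f'_H=g_H\circ(p_H\iota\otimes\ide)$ instead.

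\textbf{A smaller slip in (2)$\Rightarrow$(3).} Your aside that $\Hom(M,M)$ is a summand of $\Hom(M,\coind^G_H N_H)$ is false in general, since $M$ is only a summand of the whole direct sum. You do not need it, though. Lemma~\ref{l:P2} is really applied to $\Hom(M,\coind^G_H N_H)$ itself, with $\Tr^G_H(f'_H)=p_H\iota$ and $f_H=(\ide\otimes\pi\iota_H)\circ f'_H$.
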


\begin{proof}
The implication $(1)\Rightarrow (2)$ is trivial and $(4)\Rightarrow (1)$ is straightforward.\\
$(2)\Rightarrow (3)$: Let $E=\bigoplus_{H\in \mathcal{C}} \coind_H^G(N_H)$, where each $N_H $ is an $H$-module. 
From Lemma~\ref{l:P2}, for each $H$ we obtain  $g_H\in {\rm Hom}_H(\coind_H^G(N_H )\otimes\omega_{H,G}^{-1}, \coind_H^G(N_H))$ with transfer $\ide_{N_H}$.
 Thus
\[
{\rm Id}_E
=\bigoplus_{H\in\mathcal{C}}{\rm Id}_{ \coind_H^G\left(N_H\right)}
=\bigoplus_{H\in\mathcal{C}}{\rm Tr}_{H}^G\left(g_H\right).
\]

Let $M$ be a direct summand of $E$, and let $f_H\in{\rm Hom}_H(M\otimes \omega_{H,G}^{-1},M)$ be the composition
\[
M\otimes\omega_{H,G}^{-1}\overset{\iota\otimes{\rm id}}{\hookrightarrow}
E\otimes\omega_{H,G}^{-1}\overset{\pi_H\otimes {\rm id}}{\twoheadrightarrow}
\coind_H^G\left(N_H\right)\otimes\omega_{H,G}^{-1}\overset{g_H}{\rightarrow}
\coind_H^G\left(N_H\right)\overset{\iota_H}{\hookrightarrow}
E\overset{\pi}{\twoheadrightarrow}
M.
\]
where $\iota,\iota_H$ are inclusions and $\pi,\pi_H$ are projections.  By Proposition \ref{pr:properties-tr}, ${\rm Tr}_{H}^G\left(f_H\right)$ is the composition
 \[
M \overset{\iota}{\hookrightarrow}
E \overset{\pi_H}{\twoheadrightarrow}
\coind_H^G \left(N_H\right) \overset{{\rm Tr}_{H}^G\left(g_H\right)}{\longrightarrow}
\coind_H^G \left(N_H\right)\overset{\iota_H}{\hookrightarrow}
E\overset{\pi}{\twoheadrightarrow}
M,
\]
and so $\left\{f_H\in{\rm Hom}_H\left(M\otimes\omega_H^{-1},M\right)\right\}_{H\in\mathcal{C}}$ satisfies condition {\em (3a)}. For condition {\em (3b)}, observe that
\[
\sum_{H\in\mathcal{C}}\left(\iota_H\circ{\rm Tr}_{H}^G\left(g_H\right)\circ\pi_H\right)=
\bigoplus_{H\in\mathcal{C}}\left(\iota_H\circ{\rm Tr}_{H}^G\left(g_H\right)\circ\pi_H\right)=
{\rm Id}_{E}.
\]
Thus, for all $m\in M$,
\[
\sum_{H\in\mathcal{C}}{\rm Tr}_{H}^G\left(f_H\right)(m)
=\pi\circ\sum_{H\in\mathcal{C}}\left(\iota_H\circ{\rm Tr}_{H}^G\left(g_H\right)\circ\pi_H\right)\circ \iota (m)
=m.
\]

$(3)\Rightarrow (4)$: With the notation of Definition~\ref{def:relproj}, let $s=\sum_{H\in\mathcal{C}}{\rm Tr}_{H}^G\left(\sigma_H\circ f_H\right)$. Then
\[
q\circ\sum_{H\in\mathcal{C}}{\rm Tr}_{H}^G\left(r_H\circ f_H\right)=
\sum_{H\in\mathcal{C}}{\rm Tr}_{H}^G \left( q \circ r_H \circ f_H \right)=
\sum_{H\in\mathcal{C}}{\rm Tr}_{H}^G\left(f_H\right)={\rm Id}_{M},
\]
as required.
\end{proof}

\subsection{The composition $\Tr ^G_H \res^G_H$}\label{subsec:trres}
For finite groups (i.e.\ constant finite group schemes) there is the well-known formula $\Tr^G_H \res^G_H = |G{:}H| \ide$ on any $G$-module, which has many applications. In this sub-section we investigate the extent to which this holds for group schemes.

Because $\Tr^G_H \res^G_H$ is induced by a homomorphism 
\begin{equation}\label{e:Q1} k \xrightarrow{t_{H,G}} \coind^G_H (\omega_{H,G}^{-1}) \rightarrow \omega_{H,G}^{-1} ,\end{equation}
it is certainly zero if $\omega_{H,G}$ is not trivial. For the rest of this section (apart from Lemma~\ref{lem:coprime}) we assume that $\omega_{H,G}$ is trivial, we choose an isomorphism $\omega_{H,G} \cong k$ and then the composition above becomes multiplication by a scalar $\lambda_{H,G}$. It follows that $\Tr^G_H \res^G_H= \lambda_{H,G} \ide$ on any module. Of course, $\lambda_{G,H}$ depends on the choice of isomorphism $\omega \cong k$, but we are only interested in whether it is zero or not. We write $\lambda \sim \mu $ to indicate that $\lambda$ and $\mu$ are both either zero or both non-zero.

\begin{lemma}\label{l:Q1} Suppose $K \leq H \leq G$.
	\begin{enumerate} 
		\item 
		$\lambda _{K,G} \sim \lambda_{K,H} \lambda_{H,G}$.
	\item
	When $N \unlhd G$ we have $\lambda_{H/N,G/N} \sim \lambda_{H,G}$. 
	\end{enumerate}
\end{lemma}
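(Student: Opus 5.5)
We will read both statements off the action of $\Tr\res$ on the trivial module $k$, using the standing convention that $\omega_{H,G}\cong k$ and, in part (1), that $\omega_{K,H}$ and hence $\omega_{K,G}=\omega_{H,G}\omega_{K,H}$ are also trivial. The basic observation is this. For any module $L$, once an isomorphism $\omega\cong k$ is fixed, $\Tr^G_H\res^G_H=\lambda_{H,G}\ide$. Changing the chosen isomorphism only rescales $\lambda$ by a non-zero scalar. Hence it suffices to compute, up to non-zero scalars, the single number by which $\Tr\res$ acts on ${}^Gk=k$.

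For (1), we use transitivity in Proposition~\ref{pr:properties-tr}(\ref{tr2}):
\[
\Tr^G_K=\Tr^G_H\circ(\ide_{\omega_{H,G}}\otimes\Tr^H_K)
\]
as maps ${}^K(\omega_{K,G}\otimes L)\to{}^GL$, where we identify $\omega_{K,G}=\omega_{H,G}\omega_{K,H}$ canonically. Take $x\in{}^GL$, regarded (via the chosen trivialisations) as an element of ${}^K(\omega_{K,G}\otimes L)$. The inner map $\ide_{\omega_{H,G}}\otimes\Tr^H_K$ is applied to $\omega_{H,G}\otimes L$, viewed as an $H$-module. Since $\omega_{H,G}\cong k$ as $H$-modules, $\omega_{H,G}\otimes L$ is simply an $H$-module and its restriction to $K$ is what we feed in. So the inner step multiplies by $\lambda_{K,H}$, giving $\lambda_{K,H}$ times the image of $x$ in ${}^H(\omega_{H,G}\otimes L)$. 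The outer $\Tr^G_H$ then multiplies by $\lambda_{H,G}$. Thus $\Tr^G_K\res^G_K=\lambda_{K,H}\lambda_{H,G}\ide$.

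Comparing this with $\lambda_{K,G}\ide$ (for our choice of trivialisation of $\omega_{K,G}$) on $L=k$ gives $\lambda_{K,G}=c\,\lambda_{K,H}\lambda_{H,G}$ for some non-zero $c$. The constant $c$ accounts for the discrepancy between an arbitrary trivialisation of $\omega_{K,G}$ and the product of the trivialisations of $\omega_{H,G}$ and $\omega_{K,H}$. This gives $\lambda_{K,G}\sim\lambda_{K,H}\lambda_{H,G}$.

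For (2), note first that by Lemma~\ref{inf}, $\omega_{H,G}=\infl^H_{\bar H}\omega_{\bar H,\bar G}$. Hence triviality of one is equivalent to triviality of the other, and a trivialisation of $\omega_{\bar H,\bar G}$ inflates to one of $\omega_{H,G}$. Next apply the final corollary of Lemma~\ref{inf}, $\Tr^G_H\circ\infl^H_{\bar H}=\infl^G_{\bar G}\circ\Tr^{\bar G}_{\bar H}$ up to a non-zero scalar, to the trivial module $k=\infl k$. An element $x\in k={}^Gk$ is inflated from ${}^{\bar G}k$, and its restriction to $H$ is the inflation of its restriction to $\bar H$. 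Therefore
\[
\lambda_{H,G}x=\Tr^G_H\res^G_H x=c\,\infl\bigl(\Tr^{\bar G}_{\bar H}\res^{\bar G}_{\bar H}x\bigr)=c\,\lambda_{\bar H,\bar G}\,x
\]
with $c\neq0$, so $\lambda_{H/N,G/N}\sim\lambda_{H,G}$.

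The main point needing care is the bookkeeping of the twist $\omega_{H,G}$ in (1): we must check that $\ide_{\omega_{H,G}}\otimes\Tr^H_K$ really acts as $\lambda_{K,H}$ on the image of ${}^GL$. This uses that $\omega_{H,G}\otimes\res L$ is an honest $H$-module, trivially twisted, to which the defining property of $\lambda_{K,H}$ (valid on \emph{every} module) applies. Everything else is fixed only up to non-zero scalars, which is exactly what the relation $\sim$ allows.
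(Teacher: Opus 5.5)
Your proof is correct and follows essentially the same route as the paper. The paper's one-line proof defers to the argument of Lemma~\ref{inf}: the canonical map $t$ is compatible, up to a non-zero scalar, with transitivity and with inflation, and evaluating $\Tr\res$ on the trivial module then gives both relations. Your use of the transitivity formula of Proposition~\ref{pr:properties-tr} for (1) and of the inflation corollary of Lemma~\ref{inf} for (2), with all discrepancies between trivialisations absorbed into a non-zero constant, is exactly how that terse proof unpacks.
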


\begin{proof} This follows from the proof of Lemma~\ref{inf}.
	\end{proof}
	
	\begin{lemma}\label{l:Q2}
		For $H \leq G$, the following conditions are equivalent:
		\begin{enumerate}
		 \item $\lambda_{H,G} \ne 0$,
			\item $t^G_H$ is split injective as a homomorphism of $G$-modules,
			\item $k$ is a summand of $\coind^G_H (k)$,
			\item every $G$-module is projective relative to $H$.
		\end{enumerate}
	\end{lemma}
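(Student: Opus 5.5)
We work under the standing assumption of this subsection that $\omega_{H,G}$ is trivial, with a chosen isomorphism $\omega_{H,G}\cong k$. Then $t^G_H \colon k \to \coind^G_H(k)$, and $\lambda_{H,G}$ is the scalar given by composing $t^G_H$ with the map $\coind^G_H(k)\to k$ of (\ref{e:Q1}). That second map is $\epsilon_{G,H}(k)$, i.e.\ $g\otimes x\mapsto gx$. The plan is to prove the cycle $(1)\Rightarrow(2)\Rightarrow(3)\Rightarrow(1)$ and then to show separately that $(1)\Leftrightarrow(4)$.

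\emph{$(1)\Rightarrow(2)$.} If $\lambda_{H,G}\neq0$, then $\lambda_{H,G}^{-1}\epsilon_{G,H}(k)$ is a $G$-map $\coind^G_H(k)\to k$ that retracts $t^G_H$. Hence $t^G_H$ is split injective.

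\emph{$(2)\Rightarrow(3)$.} This is immediate: a split injection $k\to\coind^G_H(k)$ exhibits $k$ as a summand.

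\emph{$(3)\Rightarrow(1)$.} Suppose $k$ is a summand of $\coind^G_H(k)$, which is $\coind^G_H(N)$ with $N=k$. Lemma~\ref{l:P2} says that $\Tr^G_H\colon \Hom_H(k,k)\to\Hom_G(k,k)$ is surjective. Both sides are one-dimensional, spanned by the identity, and the transfer sends $\ide=\res^G_H(\ide)$ to $\Tr^G_H\res^G_H(\ide)=\lambda_{H,G}\ide$. Surjectivity therefore forces $\lambda_{H,G}\neq0$. Lemma~\ref{l:P2} does the real work in this step, through Lemma~\ref{l:P1}.

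\emph{$(1)\Rightarrow(4)$.} Let $M$ be any $G$-module and set $f_H=\lambda_{H,G}^{-1}\ide_M$, which lies in $\Hom_H(M\otimes\omega_{H,G}^{-1},M)=\Hom_H(M,M)$. Since $\Tr^G_H\res^G_H=\lambda_{H,G}\ide$, we get $\Tr^G_H(f_H)=\ide_M$. This is condition (3) of Proposition~\ref{prop:Higman} for $\mathcal{C}=\{H\}$, so $M$ is projective relative to $H$.

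\emph{$(4)\Rightarrow(3)$.} Take $M=k$. Since $k$ is projective relative to $H$, condition (1) of Proposition~\ref{prop:Higman} says that $k$ is a summand of $\coind^G_H(k)$.

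The only delicate point is the identification used in $(3)\Rightarrow(1)$: that the transfer of the identity on $\omega_{H,G}^{-1}\cong k$ equals $\lambda_{H,G}$ times the identity. This follows from the formula $\Tr^G_H(f)=\epsilon_{G,H}\circ(\ide\otimes f)\circ t^G_H$ in Definition~\ref{def:transfer}, which for $f=\ide$ is exactly the composite (\ref{e:Q1}).
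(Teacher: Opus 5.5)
Your proof is correct, and its skeleton matches the paper's. In both, $(1)\Rightarrow(2)\Rightarrow(3)$ are immediate and $(4)\Rightarrow(3)$ is Higman's criterion with $M=k$. Two steps are argued differently.

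For $(3)\Rightarrow(1)$, the paper counts dimensions. Since $\dim\Hom_G(k,\coind^G_H k)=\dim\Hom_G(\coind^G_H k,k)=1$, any inclusion and retraction exhibiting $k$ as a summand are non-zero scalar multiples of $t^G_H$ and $\epsilon_{G,H}(k)$. Their composite being $\ide_k$ then forces $\lambda_{H,G}\neq0$. You instead specialise Lemma~\ref{l:P2} to $M=N=k$ and read $\lambda_{H,G}\neq0$ off the surjectivity of $\Tr^G_H$ onto $\Hom_G(k,k)$. This reuses Lemmas~\ref{l:P1} and~\ref{l:P2} rather than making a fresh computation. The paper's count gives the extra information that every splitting is the canonical one up to scalars. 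Both arguments ultimately rest on the Wirthm\"uller isomorphism.

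For reaching (4), the paper starts from (3). It uses the projection formula $M\otimes\coind^G_H k\cong\coind^G_H\res^G_H M$ to land directly in Higman's condition (1), never mentioning $\lambda_{H,G}$. You start from (1) and feed $f_H=\lambda_{H,G}^{-1}\ide_M$ into Higman's condition (3), using $\Tr^G_H\res^G_H=\lambda_{H,G}\ide$ on $\Hom(M,M)$. This is the exact analogue of the classical argument for $p\nmid|G{:}H|$ and makes the transfer's role explicit. The cost is that you need that identity for arbitrary modules; it follows from naturality of $\epsilon_{G,H}$.
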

	
	\begin{proof}
	$(1) \Rightarrow (2) \Rightarrow (3)$ from the definitions.
	
	$(3) \Rightarrow (1)$: From properties of adjoints, we have $$\dim \Hom_G(k,\coind^G_H(k)) = \dim \Hom_G(\coind^G_H(k),k)=1.$$ The morphisms in eq.\ (\ref{e:Q1}) are non-zero and thus agree with the ones exhibiting $k$ as a summand of $\coind^G_Hk$ up to scalar multiple.
	
	$(3) \Rightarrow (4)$: If $k$ is a summand of $\coind^G_H(k)$, then $M$ is a summand of $M \otimes \coind^G_H (k)$, which is isomorphic to $\coind^G_H \res^G_H (M)$.
	
	$(4) \Rightarrow (3)$ by \ref{prop:Higman}.
	\end{proof}
	
	\begin{lemma} \label{lem:coprime}
	If $H \leq G$ and $p$ does not divide $|G{:}H|$, then the assumption that $\omega_{H,G} \cong k$ is automatically satisfied and $\lambda_{H,G} \ne 0$.
	\end{lemma}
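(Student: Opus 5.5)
The plan is to reduce to the case of connected $G$ and then do a dimension count. Here $|G{:}H| = \dim k[G]/\dim k[H]$. Since everything commutes with base change (Proposition~\ref{pr:properties-tr}(\ref{tr3})), and both triviality of $\omega$ and nonvanishing of $\lambda$ are insensitive to field extension, we may assume $k$ is algebraically closed. Then $G = G^o \rtimes \pi_0(G)$, and similarly for $H$. The order of an infinitesimal group scheme is a power of $p$, so $|G{:}H| = |G^o{:}H^o|\cdot|\pi_0G{:}\pi_0H|$, and $p \nmid |G{:}H|$ forces $G^o = H^o$.

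Once $G^o = H^o$, Corollary~\ref{cor:extra} applies with $N = G^o$. It shows that $\omega_{H,G}$ is inflated from $\omega_{\bar H,\bar G}$ with $\bar G,\bar H$ \'etale, hence constant over the algebraically closed field, and that $\omega_{\bar H,\bar G}$ is trivial. So $\omega_{H,G}\cong k$.

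Next we use Lemma~\ref{l:Q1}(2), which gives $\lambda_{H,G}\sim\lambda_{\bar H,\bar G}$. For constant finite groups the transfer is the classical one (first comment after Definition~\ref{def:transfer}), so $\Tr^{\bar G}_{\bar H}\res^{\bar G}_{\bar H} = |\bar G{:}\bar H|\ide$. Here $|\bar G{:}\bar H| = |G{:}H|$ is nonzero in $k$, so $\lambda_{\bar H,\bar G}\neq 0$ and therefore $\lambda_{H,G}\ne 0$.

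The main point needing care is the base-change step. We must check that $\omega_{H_\ell,G_\ell} = \omega_{H,G}\otimes\ell$, which holds because integrals commute with base change, so triviality descends: a one-dimensional module is trivial iff its character is trivial, and this can be checked after extension. We also need $\lambda_{H_\ell,G_\ell} = \lambda_{H,G}$ under the extended isomorphism, which follows from the construction via $t_{H,G}$. A second point is the structure result that over a perfect field $G\cong G^o\rtimes G_{\mathrm{red}}$ with $G_{\mathrm{red}}$ \'etale, together with the fact that $|G^o|$ is a $p$-power. For the index count, it is enough that $\dim k[G] = \dim k[G^o]\,|\pi_0 G|$, which holds for any finite group scheme because $G\to \pi_0G$ is faithfully flat with kernel $G^o$; this avoids needing the splitting at all.
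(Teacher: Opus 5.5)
Your proposal is correct and follows essentially the paper's route: $p\nmid|G{:}H|$ forces $G^o\le H$, Corollary~\ref{cor:extra} then gives $\omega_{H,G}\cong k$, Lemma~\ref{l:Q1}(2) reduces $\lambda_{H,G}$ to the \'etale quotient, and the classical constant-group formula $|\bar G{:}\bar H|\ne 0$ applies after a field extension. The differences are minor: you do the base change at the start rather than at the end, and you write out the index computation $|G{:}H|=|G^o{:}H^o|\,|\pi_0G{:}\pi_0H|$ that the paper leaves implicit.
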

		
		\begin{proof}
		The assumption is satisfied by Corollary~\ref{cor:extra}. The conditions imply that $G^o \leq H$. Thus $\lambda_{H,G} \sim \lambda_{H/G^o,G/G^o}$ by Lemma~\ref{l:Q1}. This extension is \'etale, so the constant group case applies after field extension, hence also before.
	\end{proof}
	
	\begin{proposition}
		For $H \leq G$ the following conditions are equivalent:
		\begin{enumerate}
			\item $ \lambda_{H,G} \ne 0$,
			\item $p$ does not divide $|G^{\text{\'et}}{:}H^{\text{\'et}}|$ and $\lambda_{H,G^oH} \ne 0$,
		\item (only if $H \unlhd G^oH$) $p$ does not divide $|G^{\text{\'et}}{:}H^{\text{\'et}}|$ and $G^o/H^o$ is of multiplicative type.
		\end{enumerate}
		\end{proposition}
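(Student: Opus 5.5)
We factor through the intermediate subgroup scheme $G^oH$, using $H \leq G^oH \leq G$. By Lemma~\ref{l:Q1}(1), $\lambda_{H,G} \sim \lambda_{H,G^oH}\,\lambda_{G^oH,G}$. (Here we assume $\omega_{H,G^oH}$ and $\omega_{G^oH,G}$ are trivial. Otherwise we use the observation after (\ref{e:Q1}) that a nontrivial $\omega$ forces the corresponding composite to vanish, and check that both sides of the equivalence then fail.) So condition (1) holds if and only if both factors are non-zero.

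Since $G^o \leq G^oH$, Corollary~\ref{cor:extra} makes $\omega_{G^oH,G}$ trivial. Lemma~\ref{l:Q1}(2) with $N=G^o$ gives $\lambda_{G^oH,G} \sim \lambda_{G^oH/G^o,\,G/G^o}$, and this is an extension of étale group schemes. After base change to $\bar{k}$ (Proposition~\ref{pr:properties-tr}(\ref{tr3})) it is an extension of finite groups, where $\lambda$ is the index up to a unit. Hence it is non-zero exactly when $p \nmid |G/G^o : G^oH/G^o|$. This index equals $|G^{\text{\'et}} : H^{\text{\'et}}|$, since $G^oH/G^o \cong H/(H\cap G^o)=H/H^o \cong H^{\text{\'et}}$ and $G/G^o\cong G^{\text{\'et}}$. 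This proves (1)$\Leftrightarrow$(2).

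For (2)$\Leftrightarrow$(3), assume $H \unlhd G^oH$. Lemma~\ref{l:Q1}(2) with $N=H$ gives $\lambda_{H,G^oH} \sim \lambda_{1,\,G^oH/H}$. The quotient $Q=G^oH/H \cong G^o/(G^o\cap H)=G^o/H^o$ is infinitesimal. By Lemma~\ref{l:Q2} with the trivial subgroup, $\lambda_{1,Q}\neq 0$ if and only if every $Q$-module is projective relative to $1$, i.e.\ projective; equivalently $k$ is projective, i.e.\ $kQ$ is semisimple. Note also that $\omega_{1,Q}={}_Q\delta$ must be trivial for $\lambda$ to be defined, and this holds automatically when $kQ$ is semisimple.

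The main obstacle is the last step: showing that an infinitesimal (more generally connected) finite group scheme $Q$ is linearly reductive precisely when it is of multiplicative type. One direction is standard: diagonalizable groups have semisimple representation categories, and so after base change to $\bar k$ do groups of multiplicative type. For the converse I would invoke Nagata's theorem, in the finite group scheme form found in Jantzen or Demazure--Gabriel: a linearly reductive connected finite group scheme over $\bar k$ is diagonalizable. Alternatively, one can argue that if $Q$ is not of multiplicative type it contains a copy of $\mathbb{G}_{a(1)}$ after base change, whose group algebra $k[x]/(x^p)$ is not semisimple. Then $\lambda$ vanishes for that subgroup, and by transitivity (Lemma~\ref{l:Q1}(1)) $\lambda_{1,Q}\sim 0$ as well. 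Base change invariance from Proposition~\ref{pr:properties-tr}(\ref{tr3}) lets us pass freely to $\bar k$ throughout.
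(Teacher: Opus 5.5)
Your proposal is correct and follows essentially the same route as the paper. You factor through $G^oH$ with Lemma~\ref{l:Q1}(1), identify $\lambda_{G^oH,G}$ with the \'etale index via Lemma~\ref{l:Q1}(2) for $N=G^o$, and for (3) reduce with $N=H$ to $\lambda_{1,G^o/H^o}$; you then use Lemma~\ref{l:Q2} together with the fact that a linearly reductive infinitesimal group scheme is of multiplicative type, which the paper takes from \cite{MR1100646} and you take from Nagata's theorem.

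Your hedging about the intermediate $\omega$'s is unnecessary. Under the standing assumption $\omega_{H,G}\cong k$, Corollary~\ref{cor:extra} makes $\omega_{G^oH,G}$ trivial, and hence $\omega_{H,G^oH}$ is trivial too. Likewise $\omega_{1,Q}$ is a module for the trivial group, so no condition on ${}_Q\delta$ is needed.
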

		
		\begin{proof}
			By the first part of Lemma~\ref{l:Q1} we  have $\lambda_{H,G} = \lambda_{H,HG^o} \lambda_{HG^o,G}$. By the second part $\lambda_{HG^o,G}=\lambda_{G^{\text{\'et}},H^{\text{\'et}}}$. This shows that $(1) \Leftrightarrow (2)$. 
			
			When $H \unlhd G^oH$ we have $\lambda_{H,G^oH} \sim \lambda_{1,G^o/H^o}$. But $\lambda_{1,G^o/H^o} \ne 0$ is equivalent to every $G^0/H^0$-module being projective relative to $1$, i.e.\ being projective, by Lemma~\ref{l:Q2}. It follows that every $G^o/H^o$-module is projective, which is equivalent to $G^o/H^o$ being of multiplicative type by \cite{MR1100646}.
		\end{proof}
		
		\begin{lemma}
			Suppose that $G$ is unipotent and $H \leq G$. Then $\lambda_{H,G} \ne 0$ if and only if $H=G$.
			\end{lemma}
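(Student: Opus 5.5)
Note first that a unipotent $G$ is unimodular, and so is every closed subgroup scheme $H$: the modular characters are characters of unipotent group schemes, and unipotent group schemes have no non-trivial one-dimensional representations. Hence $\omega_{H,G}$ is trivial and $\lambda_{H,G}$ is defined. If $H=G$, then $\coind^G_G k=k$ and $t_{G,G}$ is an isomorphism, so $\lambda_{G,G}\neq 0$.

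For the converse I would use Lemma~\ref{l:Q2}. Suppose $\lambda_{H,G}\ne 0$. Then $k$ is a direct summand of $\coind^G_H(k)$. Since $G$ is unipotent, $kG$ is a local algebra and $k$ is its only simple module. I will derive a contradiction from this when $H\neq G$, using the socle. The key fact is that $\coind^G_H(k)=kG\otimes_{kH}k$ is cyclic, generated by $1\otimes 1$, and $kG$ is local. Hence $\coind^G_H(k)$ has a unique maximal submodule $\mathrm{rad}(kG)\cdot(1\otimes 1)$, its head is $k$, and it is indecomposable. So if $k$ is a summand, then $\coind^G_H(k)\cong k$. Comparing dimensions gives $|G{:}H|=\dim \coind^G_H(k)=1$, that is $\dim k[G]=\dim k[H]$, and therefore $H=G$ since $H$ is a closed subgroup scheme.

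The only step needing care is that $kG$ is local when $G$ is unipotent. This is the standard characterisation: $G$ is unipotent if and only if $k$ is the only simple $G$-module, if and only if the augmentation ideal of $kG$ is nilpotent. Granting this, $\coind^G_H(k)$ is cyclic over a local ring, hence has simple head and is indecomposable, and the argument is complete. The dimension count uses that $kG$ is free over $kH$ of rank $|G{:}H|$, which is standard for finite group schemes. I do not expect any real obstacle beyond quoting these standard facts.
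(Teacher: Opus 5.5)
Your proof is correct and follows the paper's strategy: show that $\coind^G_H(k)$ is indecomposable, so that $k$ being a summand forces $\coind^G_H(k)\cong k$ and hence $H=G$. The paper gets indecomposability from the socle, using $\dim\Hom_G(k,\coind^G_H(k))=1$ from the proof of Lemma~\ref{l:Q2}, which compares $\coind$ with $\ind$. You use the dual fact that $\coind^G_H(k)$ is cyclic over the local algebra $kG$ and so has simple head, which needs only that it is generated by $1\otimes 1$. You also supply the unimodularity check, the case $H=G$ and the final dimension count, all of which the paper leaves implicit.
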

		
	\begin{proof}
		We saw before that $\dim \Hom_G(k,\coind_H^G(k))=1$. Since $G$ is unipotent, this implies that $\dim \soc (\coind^G_H(k))=1$ and thus $\coind^G_H(k)$ is indecomposable. The only way that $k$ can be a summand is if $\coind^G_H(k) \cong k$.
		\end{proof}

\subsection{The double coset formula}\label{sec:Mackey}
In \cite{MR1375579}, the author states and proves a double coset formula for unimodular cocommutative Hopf algebras, generalizing Mackey's classical result for finite groups, see for example \cite[3.6.3(iv)]{benson}. In this subsection we present the translation of the formula from the language of Hopf algebras to that of group schemes, referring the reader to \cite[4.18 \& 4.19]{MR1375579} for the proof and more details.

Throughout this subsection, we assume that the ground field $k$ is algebraically closed, and that the group scheme $G$ as well as all of its subgroup schemes are unimodular. All transfer maps considered are defined on invariant subspaces of modules, as in comment (2) following Definition \ref{def:transfer}, but they are only well-defined up to non-zero scalar multiple.

Let $K,H$ be closed subgroup schemes of $G$. The quotient $G/H$ of the right action of $H$ on $G$, see \cite[I.5.5-I.5.6]{MR2015057}, is an affine $k$-scheme, \cite[III.\S3.5.4]{MR0302656}, with coordinate algebra $k[G/H]=\;^Hk[G]$. The left action of $K$ on $G$ induces a left action on $G/H$; one may then form the quotient $K \backslash G / H$, which is also an affine $k$-scheme, \cite[III.12, Theorem 1]{MumfordAbelian}, with coordinate algebra $k[K \backslash G / H]=\;^Hk[G]^K$.
\begin{definition}
	In the above notation, $K \backslash G / H$ will be referred to as a {\em double coset scheme}.
\end{definition}

\begin{example}\label{ex:Mackey}
	For a commutative $k$-algebra $A$, and $ a,b,c\in A$, let $M(a,b,c)$ denote the matrix
	\[
	\begin{pmatrix}
		1 &a & b\\
		0&1&c\\
		0&0&1
	\end{pmatrix}.
	\]
	Consider the finite group scheme $G$ that represents the group-valued functor
	\[
	A\mapsto G(A)=
	\left\{
	M(a,b,c)
	\suchthat a^p=b^p=c^p=0
	\right\},
	\]
	and the closed subgroup schemes $H,K$ of $G$ that represent the subfunctors
	\[
	A\mapsto H(A)
	=
	\left\{
	M(0,0,\gamma)
	\in G(A)
	\right\}
	\text{ and }
	A\mapsto 
	K(A)
	=
	\left\{
	M(0,\beta,\gamma)
	\in G(A)
	\right\}.
	\]
	The right action of $H$, respectively the left action of $K$, on $G$ are given by matrix multiplication, as follows.
	\[
	G(A)\times H(A)\mapsto G(A),\;\left(M(a,b,c),M(0,0,\gamma)\right)\mapsto M(a,b+a\gamma ,c+\gamma)
	\]
	\[
	K(A)\times G(A)\mapsto G(A),\;\left(M(0,\beta ,\gamma),M(a,b,c)\right)\mapsto M(a,b+\beta ,c+\gamma).
	\]
	One can then verify that $V:=k[G/H]=\;^Hk[G]=k[a,d]$, where $d=ac-b$, and that the action of $K$ on $G$ takes $d$ to $d+a\gamma-\beta$. For each $0\leq i\leq p-1$, the monomial $a^i\in k[a,d]$ is invariant under the induced action of $K$ and thus $k[a]\subseteq V^K$. For the reverse inclusion, note that if $K'$ is the closed subgroup scheme of $K$ that represents the subfunctor
	\[
	A\mapsto 
	K'(A)
	=
	\left\{
	M(0,\beta,0)
	\in G(A)
	\right\},
	\]
	then $k[d]\cong k[K']$ as a $K'$-module. Thus $V^{K}\subseteq  V^{K'}\cong (k[a]\otimes k[d])^{K'}\cong k[a]$ which gives $k[K \backslash G / H]=\;^Hk[G]^K= V^K=k[a]$.
	
\end{example}

Let $c \colon G\times G\rightarrow G$ be the conjugation morphism of $G$, see \cite[I.2.4(7)]{MR2015057}. Functorially, this is given by considering, for each commutative $k$-algebra $A$, the map
\[
{\rm Hom}_{k\text{-alg}}(k[G],A)\times {\rm Hom}_{k\text{-alg}}(k[G],A)
\rightarrow
{\rm Hom}_{k\text{-alg}}(k[G],A),\;
(g_1,g_2)\mapsto g_1g_2g_1^{-1}
\]
Recall that $G^0$ denotes the connected component of the identity of $G$, and $G^{\text{\'et}}$ the quotient $G/G^0$. If the ground field $k$ is perfect, then the canonical projection splits and $G^{\text{\'et}}$ may be identified with a closed subgroup scheme of $G$ (see \cite[6.8]{MR547117}). For a fixed point $x\in G^{\text{\'et}}(k)$, let $c_x \colon G\rightarrow G$ be the restriction of $c$ to $\{x\}\times G$.
\begin{definition} In the above notation, the schematic image of the restriction of $c_x$ to a closed subgroup scheme $H$ of $G$ will be referred to as the {\em conjugate} of $H$ by $x\in G^{\text{\'et}}$, and denoted by $H^x$.
\end{definition}
Recall from  \cite[I.2.6]{MR2015057} that given a closed subgroup scheme $H$ of $G$, the group-valued functor that maps a $k$-algebra $A$ to the normalizer of the finite group ${\rm Hom}_{k\text{-alg}}(k[H],A)$ in  ${\rm Hom}_{k\text{-alg}}(k[G],A)$ is represented by a finite group scheme $N_G(H)$, called the {\em normalizer} of $H$ in $G$. Functorially,
$$A \mapsto N_{G(A)}(H(A)) = \{ g \in G(A) \mid g H(A) g^{-1} = H(A) \}$$

\begin{definition} 
	Let $K,H$ be closed subgroup schemes of $G$. One says that $H$ is {\em normalized} by $K$ if $K$ is contained in $N_G(H)$. The minimal subgroup scheme of $G$ that contains $H$ and is normalized by $K$ will be denoted by $H^K$.
\end{definition}

\noindent {\em Remark.} The reader is warned not to confuse the notation $H^K$ above, which is consistent with that of classical group theory, with the invariants/fixed points $M^K$ of the action of a group scheme $K$ on a $K$-module $M$.

\begin{theorem}\label{th:Mackey}
	Let $K,H$ and $G$ be unimodular finite group schemes over an algebraically closed field $k$, with $K,H\leq G$, and let $M$ be a $G$-module. For each closed point $x$ of $K^{\text{\'et}} \backslash G^{\text{\'et}} / H^{\text{\'et}}$ there exists a map
	\[
	\gamma_x \colon
	{}^{H^{G^0}} \! \! M \rightarrow {}^{K\cap (H^{G^0})^x} \! M
	\]
	such that 
	\[
	{\rm res}_K^G{\rm Tr}_H^G
	=
	\sum_{x\in K^{\text{\'et}} \backslash G^{\text{\'et}} / H^{\text{\'et}}}
	{\rm Tr}_{K\cap (H^{G^0})^x}^K\circ \gamma_x\circ {\rm Tr}_H^{H^{G^0}}.
	\]
\end{theorem}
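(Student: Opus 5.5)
Since $k$ is algebraically closed (hence perfect), $G = G^0 \rtimes G^{\text{\'et}}$, with $G^{\text{\'et}}$ the constant group $G^{\text{\'et}}(k)$. Write $L = H^{G^0}$ for the smallest subgroup scheme containing $H$ and normalized by $G^0$. The plan is to factor the transfer through $L$ and treat the two factors separately. By the transitivity in Proposition~\ref{pr:properties-tr}(\ref{tr2}) we have
\[ \Tr^G_H = \Tr^G_L \circ \Tr^L_H . \]
Since all groups are unimodular, each transfer is only defined up to a scalar, so this holds up to a non-zero scalar, which can be absorbed into the $\gamma_x$. This accounts for the factor $\Tr_H^{H^{G^0}}$ on the right-hand side. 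It therefore remains to prove a Mackey formula for $\res^G_K \Tr^G_L$, in which $L$ is normalized by $G^0$.

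Because $G^0$ normalizes $L$, the product $LG^0$ is a subgroup scheme with $L \unlhd LG^0$. The coinduced module $\coind^G_L(-) = kG \otimes_{kL} -$ then decomposes as a $K$-module according to the double cosets in the constant group $G^{\text{\'et}}$. Concretely, choose representatives $x$ of $K^{\text{\'et}}\backslash G^{\text{\'et}}/H^{\text{\'et}}$; these coincide with representatives of $K^{\text{\'et}}\backslash G^{\text{\'et}}/L^{\text{\'et}}$, since $L^{\text{\'et}} = H^{\text{\'et}}$ (as $L \le HG^0$). Then
\[ \res^G_K\,\coind^G_L N \cong \bigoplus_x \coind^K_{K\cap L^x}\, \res^{L^x}_{K\cap L^x}({}^xN). \]
This is the module version of Mackey's formula. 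I would prove it by filtering $kG$ by the $K\times L$-stable pieces $k[KxL]$ indexed by the finite set of closed points of the double coset scheme. This is where normality of $L$ in $LG^0$ is needed: the infinitesimal part acts inside each piece, so $KxL$ is a genuine scheme-theoretic double coset, with $K\times L$-orbit $K\times L/(K\cap L^x)$. The decomposition is a direct sum rather than merely a filtration because the pieces are supported on disjoint clopen subsets of the étale quotient.

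Next I would check that the canonical element $t_{L,G}$, viewed under this decomposition, is the sum over $x$ of the elements $t_{K\cap L^x,K}$, each transported by conjugation by $x$. This holds up to scalars because each $\Hom$ space involved is one-dimensional (unimodularity makes every $\omega$ trivial). Given that, naturality of the transfer yields
\[ \res_K\Tr^G_L = \sum_x \Tr^K_{K\cap L^x}\circ \gamma_x , \]
where $\gamma_x$ is conjugation by $x$ followed by restriction from $L^x$ to $K\cap L^x$, sending ${}^L M \to {}^{K\cap L^x}M$. Composing with $\Tr^L_H$ gives the statement.

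The main obstacle is the scalar bookkeeping. Each component of $t_{L,G}$ in the decomposition is nonzero up to scalar, but the components must be identified with the respective $t_{K\cap L^x,K}$. I would handle this by absorbing the scalars into $\gamma_x$, using Lemma~\ref{l:P1} to guarantee that each component is nonzero. A further point needing care is that the decomposition over closed points genuinely requires $L$ to be $G^0$-normalized: for a general $H$ the double coset scheme is non-reduced, as Example~\ref{ex:Mackey} shows, and no such splitting exists. That failure is precisely why the formula must pass through $H^{G^0}$.
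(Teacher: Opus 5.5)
There is a genuine gap. The module-level decomposition
\[
\res^G_K\coind^G_L N\;\cong\;\bigoplus_x \coind^K_{K\cap L^x}\res^{L^x}_{K\cap L^x}({}^xN)
\]
is false in general, even though $L=H^{G^0}$ is normalized by $G^0$.

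The $K$-stable clopen piece of $G/L$ lying over a closed point $x$ contains the whole connected component $xG^0L/L\cong G^0/L^0$ at $xL$. This component has order $|G^0{:}L^0|$. The $K$-orbit of $xL$ is only $K/(K\cap L^x)$, and its component at $xL$ has order $|K^0{:}K^0\cap (L^x)^0|$. The two agree only when $K^0(L^x)^0=G^0$. Your sentence ``the infinitesimal part acts inside each piece, so $KxL$ is a genuine double coset'' conflates the orbit $KxL$ with the full clopen piece of $G$ over $x$.

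The simplest counterexample is $G=\alpha_p$ with $H=K=1$. Then $L=1$ and there is one double coset, so your decomposition reads $kG\otimes N\cong N$. Your resulting formula says $\Tr^G_1$ is a scalar multiple of $\ide_M$, but $\Tr^G_1$ is multiplication by the non-zero nilpotent integral of $kG$. The paper's Example~\ref{ex:Mackey} also refutes the claim that $G^0$-normalization makes the double coset scheme split: there $H^{G^0}=K\unlhd G$, and $K\backslash G/K=G/K\cong\alpha_p$ is still non-reduced.

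So the main obstacle is not scalar bookkeeping. Also, $\gamma_x$ cannot simply be conjugation followed by restriction; in the $\alpha_p$ example one is forced to take $\gamma=\Tr^G_1$. This is why the theorem only asserts that some $\gamma_x$ exists.

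What is missing is a separate treatment of the infinitesimal directions not covered by $K$. One way to repair the argument is as follows.
\begin{enumerate}
\item Put $Q=HG^0$ and $N=H^{G^0}$. Then $N\unlhd Q$, since $N$ is normalized by $G^0$ and, by minimality, by $H$. Up to scalar, $\Tr^G_H=\Tr^G_Q\circ\Tr^Q_N\circ\Tr^N_H$.
\item Because $Q\supseteq G^0$, the quotient $G/Q\cong G^{\text{\'et}}/H^{\text{\'et}}$ is \'etale. Its $K$-orbits really are $K/(K\cap Q^x)$, so your argument works at this level and gives $\res^G_K\Tr^G_Q=\sum_x\Tr^K_{K\cap Q^x}\circ\res\circ c_x$.
\item For each $x$ set $B=K\cap Q^x$. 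By inflation (Lemma~\ref{inf}), $\res^{Q^x}_B\Tr^{Q^x}_{N^x}$ on ${}^{N^x}M$ becomes $\res^{\bar Q}_{\bar B}\Tr^{\bar Q}_1$, where $\bar Q=Q^x/N^x$ and $\bar B$ is the image of $B$.
\item Since $k\bar Q$ is free over $k\bar B$, a non-zero integral factors as $\Lambda_{\bar Q}=\Lambda_{\bar B}\,y_x$ for some $y_x\in k\bar Q$. Hence $\res^{Q^x}_B\Tr^{Q^x}_{N^x}=\Tr^B_{K\cap N^x}\circ(y_x\cdot -)$ up to scalar.
\item Combining this with $\Tr^K_B\circ\Tr^B_{K\cap N^x}=\Tr^K_{K\cap N^x}$ gives the theorem with $\gamma_x=(y_x\cdot -)\circ c_x$.
\end{enumerate}
The correction factor $y_x$, which accounts for the part of $G^0$ that $K$ does not reach, is the ingredient your proposal lacks.
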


\begin{proof}
	See \cite[4.18 \& 4.19]{MR1375579}
\end{proof}

\noindent{\em Remark.} In light of the Remark preceding the theorem, $H^{G^0}$ is the minimal subgroup scheme of $G$ that contains $H$ and is normalized by $G^0$, while $M^{H^{G^0}}$ denotes the fixed points of $M$ under the action of $H^{G^0}$. Note that if $H^{G^0}=H$, the formula of Theorem \ref{th:Mackey} becomes identical to the classical Mackey formula for finite groups, see \cite[3.6.11]{benson}. \\

\noindent{\em Remark.} In \cite[4.18]{MR1375579}, one finds  ${\rm Tr}_{K\cap H^x}^K $ instead of ${\rm Tr}_{K\cap (H^{G^0})^x}^K$, but this is clearly not what is intended by the author: see the three lines marked {\em Proof of Theorem 4.19} on p.302. This change means that we might be transferring from larger subgroups than might be expected from the case of finite groups.\\

\noindent{\em Remark.} The transfer maps are only defined up to non-zero scalar multiple. One makes a choice and then the $\gamma_x$ depend on this choice.\\

\noindent{\em Remark.} Combining Theorem \ref{th:Mackey} with Higman's Criterion in Proposition \ref{prop:Higman} one sees that modules which are first induced from $H$ and then restricted back to $K$ are always projective relative to the class of subgroups of the form $K\cap (H^{G^0})^x$.\\

\noindent{\em Remark.} Another approach to the Mackey formula can be found in \cite[\S 8]{MR486168}.

\begin{example}
	We illustrate the necessity of the term $H^{G^0}$ in the formula above by exhibiting a case where projectivity fails relative to the naive intersection $K \cap H$, let $K',K,H,G$ be as in Example \ref{ex:Mackey}. 
	
	Assume for contradiction that $V=k[G/H]$ is projective relative to some proper subgroup scheme of $K$.  Recall that we have established that $V\cong k[a,d]$ and that $V/aV \cong k[d]$ is isomorphic to the regular representation of the infinitesimal subgroup scheme $K'\cong \alpha_p$; this forces $V/aV$ to be indecomposable as a $K$-module. Since $a$ is a nilpotent invariant element, it is contained in the radical and so $V$ itself must be an indecomposable $K$-module. Thus $V$ must be a summand of a module induced from a proper subgroup of $K$. 
	
	Since $K\cong \alpha_p\times\alpha_p$, its proper subgroup schemes are either trivial or isomorphic to $\alpha_p$. Indecomposable $\alpha_p$-modules are of dimension $1,\ldots,p$  and their induction to $K$ gives modules of dimension $p,\ldots,p^2$. Thus, the only induced module large enough to contain $V$ as a summand is free, and the same is true for the trivial subgroup. In any case, $V$ must be projective, contradicting the fact that $\dim_k V^K= p$. It follows that $V$ has no summand of the form $k[K/H]$, as would be expected from the case of finite groups. Note that this is consistent with the result of Theorem \ref{th:Mackey}, since $H$ is not normal in this case.
\end{example}

\subsection{Cohomology}\label{sec:coh}
Let $M$ be a $G$-module. For $n\in\mathbb{N}$, we denote as usual by ${\rm Ext }_G^n(M,-)$ the $n$-th right derived functor of the left exact functor ${\rm Hom}_G(M,-)$, \cite[2.5.2]{Weibel}. Group scheme cohomology can be defined either as $H^n(G,-)={\rm Ext }_G^n(k,-)$, or as the $n$-th right derived functor of the left exact functor ${}^G(-)={\rm Hom}_G(k,-)$, \cite[I.4.2]{MR2015057}.

For properties referring to ${\rm Ext }_G^n$, resp. $H^n(G,-)$, for all $n\in\mathbb{N}$ we shall often simply write ${\rm Ext }_G$, resp. $H(G,-)$. Note that ${\rm Ext}_G$ is a {\em universal} $\delta$-functor, see \cite[2.6]{Weibel}; thus any morphism defined on the level of ${\rm Hom}$ sets gives rise to a unique morphism between {\rm Ext} groups.

\begin{definition}\label{def:exttransfer}
	Let $H$ be a closed subgroup scheme of $G$, and let $M,M'$ be $G$-modules. The unique map of $k$-modules
	\[
	{\rm Ext}_H(\omega_{H,G}^{-1}\otimes M',M)\rightarrow{\rm Ext}_{G}(M',M)
	\]
	that extends the transfer map ${\rm Hom}_H(\omega_{H,G}^{-1}\otimes M',M)\rightarrow{\rm Hom}_{G}(M',M)$ of Definition \ref{def:transfer} will also be referred to as the {\em relative transfer} from $H$ to $G$ and denoted by ${\rm Tr}_{H}^G$.
\end{definition}

\noindent{\em Remark.} Equivalently, one can arrive at Definition \ref{def:exttransfer} by taking a projective resolution of $M'$,
\[
(P_\bullet,d_\bullet) \colon \cdots \overset{d_2}{\longrightarrow} P_1\overset{d_1}{\longrightarrow} P_0\overset{d_0}{\longrightarrow} M'.
\]
For each $n\in\mathbb{N}$, the transfer maps $
{\rm Hom}_H(\omega_{H,G}^{-1}\otimes P_{n}, M)\rightarrow  {\rm Hom}_G(P_{n}, M)
$ of Definition \ref{def:transfer} commute with the differentials $d_n$ of the resolution by Proposition \ref{pr:properties-tr}. In this manner, one obtains a map of cochain complexes ${\rm Hom}_H(\omega_{H,G}^{-1}\otimes P_{\bullet}, M)\rightarrow {\rm Hom}_G(P_{\bullet}, M)$ and the relative transfers of Definition \ref{def:exttransfer} are exactly the induced maps on cohomology.\\

Note that when both $G$ and $H$ are unimodular, see Definition \ref{def:delta},  choosing an isomorphism $M' \cong k$ in Definition \ref{def:exttransfer} gives rise to a map $H(H,M)\rightarrow H(G,M)$ in group scheme cohomology. In particular, if $G$ is a constant group scheme, one retrieves the usual definition of transfer for group cohomology; this follows by Remark (3) after Definition \ref{def:transfer} and universality of the ${\rm Ext}$ functor. However, when $\omega_{H,G}\not \cong  k$, the transfer is not defined between cohomology groups with trivial coefficients; instead, one has the following two special cases.

\begin{proposition} Let $H$ be a closed subgroup scheme of $G$.
	\begin{enumerate}
		\item
		Definition \ref{def:exttransfer} gives rise to transfer maps
		\[
		H (H,\omega_{H,G})\rightarrow H (G,k).
		\]
		\item If $\delta_H$ is the restriction of some $G$-module $\chi $, then Definition \ref{def:exttransfer} gives rise to transfer maps
		\[
		H (H,k)\rightarrow H (G,\delta_G \chi^{-1}).
		\]
	\end{enumerate}
\end{proposition}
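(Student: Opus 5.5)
Both parts come from specializing the modules $M'$ and $M$ in Definition~\ref{def:exttransfer}. The two key inputs are the canonical identification $\mathrm{Ext}_G(k,-)\cong H(G,-)$ and the fact that tensoring with a $1$-dimensional module $L$ is an exact autoequivalence. That autoequivalence gives natural isomorphisms
\[
\mathrm{Ext}_H(L\otimes A,B)\cong \mathrm{Ext}_H(A,L^{-1}\otimes B),
\]
compatible with the identification of $\mathrm{Hom}$ in degree $0$ and with connecting maps. It therefore passes through the universal $\delta$-functor property recalled before Definition~\ref{def:exttransfer}.

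For (1), take $M'=k$ and $M$ arbitrary. Definition~\ref{def:exttransfer} then gives
\[
\mathrm{Tr}_H^G\colon \mathrm{Ext}_H(\omega_{H,G}^{-1},M)\to \mathrm{Ext}_G(k,M)=H(G,M).
\]
By the isomorphism above with $L=\omega_{H,G}^{-1}$, the domain is $\mathrm{Ext}_H(k,\omega_{H,G}\otimes M)=H(H,\omega_{H,G}\otimes M)$. Taking $M=k$ yields $H(H,\omega_{H,G})\to H(G,k)$.

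Part (2) has one real point to check: we need to move a twist across the transfer, and only twists by $G$-modules may be moved. Let $\chi$ be a $G$-module with $\res^G_H\chi\cong\delta_H$. Recall $\omega_{H,G}=\res^G_H({}_G\delta)\otimes{}_H\delta^{-1}$, and identify left and right versions of $\delta$ via the antipode as in the paper. Then
\[
\omega_{H,G}^{-1}\otimes \res^G_H(\chi\otimes{}_G\delta^{-1})\cong {}_G\delta^{-1}\,{}_H\delta\,{}_H\delta^{-1}\,{}_G\delta\cong k .
\]
Now apply Definition~\ref{def:exttransfer} with $M'=\chi\otimes{}_G\delta^{-1}$, a genuine $G$-module, and $M=k$. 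This gives
\[
\mathrm{Ext}_H(\omega_{H,G}^{-1}\otimes M',k)\to \mathrm{Ext}_G(M',k).
\]
By the displayed isomorphism, the domain is $\mathrm{Ext}_H(k,k)=H(H,k)$. Applying the same tensor identity over $G$ to the codomain gives $\mathrm{Ext}_G(\chi\otimes{}_G\delta^{-1},k)\cong \mathrm{Ext}_G(k,{}_G\delta\,\chi^{-1})=H(G,\delta_G\chi^{-1})$. Composing these gives the required map $H(H,k)\to H(G,\delta_G\chi^{-1})$.

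The step needing care is the sidedness bookkeeping, i.e.\ whether the answer is $\delta_G\chi^{-1}$ or its inverse. This depends on the convention that ${}_G\delta$ and $\delta_G$ correspond under the antipode and that the dual of a $1$-dimensional module is formed via the antipode. I would check it against the constant-group case, where everything is trivial, and against the Wirthm\"uller twist.
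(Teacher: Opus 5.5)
Your route is the paper's: specialize $M'$ and $M$ in Definition~\ref{def:exttransfer}, then move $1$-dimensional twists across $\mathrm{Ext}$ (the paper cites \cite[I.4.4]{MR2015057} for this). Part (1) is exactly the paper's argument and is correct.

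Part (2), however, rests on a false isomorphism. With $M'=\chi\otimes{}_G\delta^{-1}$, the factor ${}_G\delta^{-1}$ restricts to ${}_G\delta^{-1}$, not to ${}_G\delta$. Under the identification you declare ($\res^G_H\chi\cong\delta_H\cong{}_H\delta$ via the antipode), you get
\[
\res^G_H(\chi\otimes{}_G\delta^{-1})\cong{}_H\delta\,{}_G\delta^{-1}=\omega_{H,G}^{-1}.
\]
Hence $\omega_{H,G}^{-1}\otimes\res^G_H M'\cong\omega_{H,G}^{-2}$. Your choices therefore produce a transfer $H(H,\omega_{H,G}^{2})\to H(G,{}_G\delta\,\chi^{-1})$, not a map out of $H(H,k)$. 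Reading $\res^G_H\chi$ as ${}_H\delta^{-1}$ instead does not help: the product becomes $\res^G_H({}_G\delta^{-2})$. Your codomain computation is right. The domain computation silently inverts both factors of $\res^G_H M'$.

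The correct bookkeeping is as follows. For $1$-dimensional $M'$, set $N=M'^{-1}\otimes M$. The transfer out of $\mathrm{Ext}_H(\omega^{-1}_{H,G}\otimes M',M)$ lands in $H(G,N)$, and its domain is $H(H,\omega_{H,G}\otimes\res^G_H N)$. So what you need is a $G$-module $N$ with $\res^G_H N\cong\omega_{H,G}^{-1}={}_G\delta^{-1}\,{}_H\delta$.

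In your antipode convention this is $N={}_G\delta^{-1}\chi$, obtained with $M'={}_G\delta\otimes\chi^{-1}$ and $M=k$. By contrast, ${}_G\delta\,\chi^{-1}$ restricts to $\omega_{H,G}$ and would only work if $\omega_{H,G}^{2}\cong k$.

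The statement's coefficient module $\delta_G\chi^{-1}$ has the required restriction precisely when $\delta_G$ and $\delta_H$ are read as ${}_G\delta^{-1}$ and ${}_H\delta^{-1}$. That is, the right modules of integrals are turned into left modules by reading off their characters, without the antipode. This is the reading under which the paper's one-line proof ($M'=k$, $M=\delta_G\chi^{-1}$, domain $\mathrm{Ext}_H(k,k)$) goes through. So you need to adopt that reading, or else state the target as $H(G,{}_G\delta^{-1}\chi)$.

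The sidedness issue you flagged is therefore real, but the checks you propose would not catch it. In the constant-group case all these characters are trivial, so no inversion error is visible there. The check that does detect it is $\res^G_H N\cong\omega^{-1}_{H,G}$, the same twist already visible in part (1).
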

\begin{proof}
	Putting $M=M'=k$ in Definition \ref{def:exttransfer}, the codomain of the transfer becomes ${\rm Ext}_G\left(k,k\right)=H(G,k)$; by \cite[I.4.4]{MR2015057}, its domain satisfies
	$
	{\rm Ext}_H^n(\omega_{H,G}^{-1},k)\cong {\rm Ext}_H^n(k,\omega_{H,G})=H (H,\omega_{H,G})$, proving (1). For (2), put $M'=k$ and $M=\delta_G\chi^{-1}$ in Definition \ref{def:exttransfer} to get a tranfer with domain ${\rm Ext}_H^n(k,k)=H^n(H,k)$ and codomain ${\rm Ext}_H^n(k,\delta_G\chi^{-1})=H^n(G,\delta_G\chi^{-1})$.
\end{proof}

\noindent{\em Remark.} The $G$-module $\chi $ above always exists when $G$ acts on $H$, as is the case when $H$ is normal in $G$ and $G$ acts via conjugation, see \cite[I.8.19]{MR2015057}.
\\

The properties below follow directly from Proposition \ref{pr:properties-tr} and universality of ${\rm Ext}$.

\begin{proposition}\label{prop:exttrprop}
	Let $M',M,N',N$ be $G$-modules, and let $K,H$ be closed subgroup schemes of $G$ with $K\leq H$.
	\begin{enumerate}
		\item If $f\in{\rm Ext}_H^{n_1}(M'\otimes\omega_{H,G}^{-1},M),\;g\in{\rm Ext}_G^{n_2}\left(N', M'\right)$ and $h\in{\rm Ext}_G^{n_3}\left(M,N\right)$, then 
		\[
		h\circ {\rm Tr}_{H}^G(f)\circ g={\rm Tr}_{H}^G(h \circ f\circ (g\otimes\mathbb{1}_{\omega_{H,G}^{-1}}))
		\in
		{\rm Ext}_G^{n}\left(N',N\right),
		\]
		where $n=n_1+n_2+n_3$, $\circ$ is the Yoneda product, and $\mathbb{1}_{\omega_{H,G}^{-1}}$ is the identity element of ${\rm Ext}_H(\omega_{H,G}^{-1},\omega_{H,G}^{-1})$.
		\item ${\rm Tr}_{H}^G\circ{\rm Tr}_{K}^H={\rm Tr}_{K}^G$.\\
		\item If $L/k$  is a field extension, and $G_L=G\times_{{\rm Spec} k}{\rm Spec}L,\;H_L=H\times_{{\rm Spec} k}{\rm Spec}L$, then ${\rm Tr}_{H}^G\otimes L={\rm Tr}_{H_L}^{G_L}$, as maps
		$
		{\rm Ext}_{H_L}((M'\otimes L)\otimes_L\omega_{H_L,G_L}^{-1},M\otimes L))\rightarrow
		{\rm Ext}_{G_L}(M'\otimes  L,M\otimes  L).
		$
	\end{enumerate}
\end{proposition}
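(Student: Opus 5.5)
The plan is to reduce every statement to the corresponding statement on $\Hom$ in Proposition~\ref{pr:properties-tr}. The tool for lifting is the universality of ${\rm Ext}$ as a $\delta$-functor, or equivalently the resolution description in the Remark after Definition~\ref{def:exttransfer}. Fix a projective resolution $P_\bullet \to N'$ of $G$-modules. Then $\omega_{H,G}^{-1}\otimes P_\bullet$ restricted to $H$ is a projective resolution of $\omega_{H,G}^{-1}\otimes N'$, since restriction preserves projectives ($kG$ is free over $kH$) and tensoring with a one-dimensional module preserves projectives and exactness. So ${\rm Tr}^G_H$ is induced by the cochain map
\[
\Hom_H(\omega_{H,G}^{-1}\otimes P_\bullet,M)\to \Hom_G(P_\bullet,M),
\]
which is a cochain map by part (1b) of Proposition~\ref{pr:properties-tr}.

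For (1), I will treat the three factors separately and then compose; by associativity of the Yoneda product it suffices to do $g=\ide$ and $h=\ide$ in turn. For $h$, I represent $h$ by a chain map $\tilde h\colon Q_\bullet \to R_\bullet[n_3]$ lifting it, where $Q_\bullet$ and $R_\bullet$ are resolutions of $M$ and $N$. Alternatively, I use injective resolutions in the second variable, since ${\rm Tr}$ is natural in the second variable by part (1a) of Proposition~\ref{pr:properties-tr}. The transfer is then applied degreewise. Naturality in each degree shows that ${\rm Tr}$ commutes with composition by $\tilde h$ at the cochain level, hence on cohomology.

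For $g$, I lift $g$ to a chain map $\tilde g\colon P'_\bullet \to P_\bullet[n_2]$ of $G$-resolutions. Tensoring with $\omega^{-1}_{H,G}$ gives a lift of $g\otimes\mathbb{1}$ on the $H$-side. Part (1b) of Proposition~\ref{pr:properties-tr} in each degree then yields ${\rm Tr}(f)\circ g={\rm Tr}(f\circ(g\otimes \mathbb{1}))$.

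I expect the main bookkeeping obstacle to be the mixed-degree case. It comes down to checking that Yoneda products computed via resolutions agree with the degreewise formula, including signs. I would sidestep this by using dimension shifting. Both sides are morphisms of $\delta$-functors in each variable, since the transfer commutes with connecting homomorphisms, this being part of the universal extension. They agree in degree $0$ by Proposition~\ref{pr:properties-tr}, so they agree everywhere by universality.

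For (2), the point is that the composite ${\rm Tr}^G_H\circ{\rm Tr}^H_K$ is itself a morphism of $\delta$-functors extending the degree-zero composite. Here the intermediate ${\rm Tr}^H_K$ must be taken with the twist by $\omega_{H,G}$, using the identification $\omega_{K,G}=\omega_{H,G}\,\omega_{K,H}$. By part (2) of Proposition~\ref{pr:properties-tr} that degree-zero composite equals ${\rm Tr}^G_K$, so uniqueness of the extension gives the equality.

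For (3), base change $-\otimes \ell$ is exact and sends projective $kG$-modules to projective $\ell G_\ell$-modules. Hence ${\rm Ext}_{G_\ell}(M'\otimes\ell,M\otimes\ell)\cong {\rm Ext}_G(M',M)\otimes\ell$ for finite-dimensional $M'$, or more generally via resolutions. The cochain-level transfer is then the base change of the original by part (3) of Proposition~\ref{pr:properties-tr}. Taking cohomology commutes with the flat extension $\ell/k$, which gives the claim.
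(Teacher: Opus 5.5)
Your proposal is correct and follows the paper's route. The paper's entire proof is the remark that these properties follow from Proposition~\ref{pr:properties-tr} together with universality of ${\rm Ext}$, and your resolution-level and $\delta$-functor arguments are a fleshed-out version of exactly that. Your chain-level treatment of $g$ and $h$, with ${\rm Tr}$ applied degreewise to lifts using part (1b), already handles arbitrary degrees, so the dimension-shifting ``sidestep'' is unnecessary. As phrased it is also slightly loose: for fixed classes $g$ and $h$, the two sides are not literally maps of $\delta$-functors in the variables $M'$ and $M$.
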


The properties of the transfer that we proved for modules carry over to ${\rm Ext}$ groups. In particular, $\Tr^G_H \res^G_H$ is multiplication by $\lambda_{H,G}$.

\section{The norm}\label{sec:norm}
\subsection{Notation}\label{sec:notation}
Throughout this section, $k$ is a field of characteristic $p>0$, $G$ is a finite group scheme over $k$ and $S$ is a commutative $k$-algebra endowed with the structure of a right $k[G]$-comodule algebra via a coaction map $\sigma:S\rightarrow S\otimes k[G]$, see \cite[4.1.2]{MR1243637}. 

\subsection{Invariants of infinitesimal group schemes}\label{sec:norminf}

Recall from \cite[II.7.1.4]{MR0302656} that if $G$ is infinitesimal and $\mathfrak{m}$ denotes the maximal ideal of the local ring $k[G]$, then the {\em height} of $G$ is
\[
{\rm ht}(G)
={\rm min}
\{
n\;|\;f^{p^n}=0\;\text{for all}\;f\in\mathfrak{m}
\}.
\]

\begin{proposition}\label{prop:height}
	Let $G$ be an infinitesimal group scheme. Then
	\begin{enumerate}
		\item $s^{p^n}\in S^{G}$ for all $s\in S$ and all $n\geq{\rm ht}(G)$.
		\item $s^{|G|}\in S^{G}$ for all $s\in S$.
		\item $s^{|G{:}H|}\in S^G$ for all closed subgroup schemes $H\leq G$ and all $s\in S^H$.
	\end{enumerate}
\end{proposition}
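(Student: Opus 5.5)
The plan is to work directly with the coaction $\sigma\colon S\to S\otimes k[G]$. Recall that $s\in S^G$ means exactly $\sigma(s)=s\otimes 1$. Since $G$ is infinitesimal, $k[G]$ is local with maximal ideal $\mathfrak{m}$, the augmentation ideal, and $k[G]=k\cdot 1\oplus\mathfrak{m}$.

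\emph{Part (1).} Write $\sigma(s)=s\otimes 1+\sum_i s_i\otimes f_i$ with $f_i\in\mathfrak{m}$; this uses the counit axiom $(\ide\otimes\epsilon)\sigma=\ide$. The map $\sigma$ is a homomorphism of commutative $k$-algebras in characteristic $p$, so Frobenius is additive and
\[
\sigma(s^{p^n})=\sigma(s)^{p^n}=s^{p^n}\otimes 1+\sum_i s_i^{p^n}\otimes f_i^{p^n}.
\]
For $n\ge {\rm ht}(G)$ every $f_i^{p^n}=0$, so $\sigma(s^{p^n})=s^{p^n}\otimes 1$ and $s^{p^n}$ is invariant.

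\emph{Part (2).} Deduce this from (1) by showing that $|G|=\dim k[G]$ is a power of $p$ that is at least $p^{{\rm ht}(G)}$. By the structure theorem for local finite-dimensional Hopf algebras over a perfect field (\cite[II.7]{MR0302656}, or \cite{MR2015057}), after base change to $\bar k$ we have $k[G]\cong \bar k[x_1,\dots,x_r]/(x_1^{p^{e_1}},\dots,x_r^{p^{e_r}})$ with ${\rm ht}(G)=\max e_i$. Hence $|G|=p^{\sum e_i}\ge p^{{\rm ht}(G)}$, and $s^{|G|}=s^{p^{m}}$ with $m\ge {\rm ht}(G)$. Invariance can be checked after base change to $\bar k$, since $S^G\otimes\bar k=(S\otimes\bar k)^{G_{\bar k}}$. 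If one prefers to avoid citing the structure theorem, it is enough to prove $\dim k[G]\ge p^{{\rm ht}(G)}$ directly. Take $f\in\mathfrak{m}$ with $f^{p^{h-1}}\ne0$; the powers $1,f,\dots,f^{p^{h-1}}$ are then linearly independent, which only gives a weaker bound. So the structure theorem is the right tool.

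\emph{Part (3).} This is the step I expect to be the main obstacle. The point is to make sense of the $H$-invariants and to relate $|G{:}H|$ to a height. Here $S^H$ carries a residual structure: $G/H$ is an infinitesimal scheme with $k[G/H]={}^Hk[G]$, which is local. I would argue on the coaction restricted to $S^H$. For $s\in S^H$, the element $\sigma(s)$ lies in $S\otimes k[G]$, and I claim it lies in $S\otimes k[G]^H$ for the appropriate side action; this is the standard fact that the $G$-orbit map of an $H$-fixed element factors through $G/H$. Write $\sigma(s)=s\otimes1+\sum s_i\otimes f_i$ with $f_i$ in the maximal ideal $\mathfrak{n}$ of $k[G/H]$. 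Then $\sigma(s^{p^m})=s^{p^m}\otimes1+\sum s_i^{p^m}\otimes f_i^{p^m}$, so it suffices that $f^{|G:H|}=0$ for all $f\in\mathfrak{n}$.

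Now $k[G/H]$ is a local algebra of dimension $|G{:}H|$ which, over $\bar k$, is a truncated polynomial algebra $\bar k[y_j]/(y_j^{p^{c_j}})$ (the quotient $G/H$ is a homogeneous space of an infinitesimal group, and \cite[III.3]{MR0302656} gives this form). Hence $|G{:}H|=p^{\sum c_j}$, and every $f\in\mathfrak{n}$ satisfies $f^{p^{\max c_j}}=0$, so $f^{|G:H|}=0$. If the structure of $k[G/H]$ is not available, the fallback is cruder: any local algebra of dimension $d$ has $\mathfrak{n}^d=0$. Since $|G{:}H|$ is a $p$-power (because $|G|$ and $|H|$ are), this still gives $f^{|G:H|}\in\mathfrak{n}^{|G:H|}=0$. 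The fallback therefore suffices and avoids the structure theorem entirely. The same nilpotence argument also gives an alternative proof of (2), taking $H=1$.
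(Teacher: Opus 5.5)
\emph{Verdict:} your proposal is correct. Parts (1) and (2) follow the paper, and part (3) takes a genuinely different route.

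\emph{Parts (1) and (2).} Part (1) is the same Frobenius computation on the coaction that the paper uses. For (2), the paper argues exactly as in your closing remark: $|G|$ is a power of $p$ and $\mathfrak m^{|G|}=0$. So the structure theorem for $k[G]$ is never needed. Your ``weaker bound'' $|G|\ge p^{h-1}+1$ already suffices once you know $|G|$ is a $p$-power, since then $|G|\ge p^{h}$.

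\emph{Part (3), the paper's route.} The paper inducts on ${\rm ht}(G)$. It takes a normal subgroup $N$ of height one (from the Frobenius-kernel filtration) with ${\rm ht}(G/N)<{\rm ht}(G)$, and splits into the cases $N\le H$ and $N\not\le H$. In the second case it uses (1) for $N$ to get $s^{|NH:H|}\in S^N\cap S^H=S^{NH}$. It then applies the inductive hypothesis to $G/N$ acting on $S^N$, using $(S^N)^{G/N}=S^G$ and $|G:H|=|G:NH|\,|NH:H|$.

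\emph{Part (3), your route.} You run the argument of (1)--(2) once, with $k[G/H]$ in place of $k[G]$. The factorization you call standard is a one-line check. Apply $\mathrm{id}\otimes\mathrm{id}\otimes\pi$ to $(\sigma\otimes\mathrm{id})\sigma=(\mathrm{id}\otimes\Delta)\sigma$ and use $(\mathrm{id}\otimes\pi)\sigma(s)=s\otimes 1$. This gives $(\mathrm{id}_S\otimes\rho)\sigma(s)=\sigma(s)\otimes 1$ with $\rho=(\mathrm{id}\otimes\pi)\Delta$. Exactness of $S\otimes-$ then yields $\sigma(s)\in S\otimes k[G]^H$ for the right-translation action. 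The rest goes through:
\begin{itemize}
\item $\sigma(s)-s\otimes 1\in S\otimes\mathfrak n$, where $\mathfrak n=\mathfrak m\cap k[G/H]$ is the maximal ideal of the local algebra $k[G/H]$.
\item $k[G/H]$ has dimension $|G|/|H|$, which is a $p$-power.
\item Hence $\mathfrak n^{|G:H|}=0$, and additivity of Frobenius gives $\sigma(s^{|G:H|})=s^{|G:H|}\otimes 1$.
\end{itemize}

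\emph{Comparison.} Your proof is short and non-inductive. It avoids the normal series, the passage to $S^N$ as a $G/N$-algebra, and the case split. It even gives the sharper exponent: the least $p$-power killing every element of $\mathfrak n$. Its one extra input is $\dim k[G/H]=|G|/|H|$, i.e.\ freeness of $k[G]$ over $k[G/H]$. This is of the same nature as the order-of-quotient identities the paper's induction relies on, such as $|G/N:H/N|=|G:H|$. The paper's argument, by contrast, needs (1) only for height-one groups and stays within invariants of quotient groups.

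The truncated-polynomial description of $k[G/H]$ that you mention first is unnecessary. Your nilpotence ``fallback'' is the actual proof and needs no structure theory.
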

\begin{proof}
	(2) follows from (1) since the order of $G$ is a $p$-th power, \cite[11.30]{MR3729270} and $\mathfrak{m}^{|G|}=0$, \cite[11.32]{MR3729270}. (1) follows from \cite[1.5.2]{MR4844642} or \cite[2.8]{MR4692505}; we recall the proof from the latter for clarity of the exposition. Let $\epsilon:k[G]\rightarrow k$ be the augmentation map of the Hopf algebra $k[G]$. Each $f\in k[G]$ can be written as $f=\epsilon(f)+\overline{f}$, with $\overline{f}\in\ker\epsilon$. Then $(\ker\epsilon)^{p^n}=0$, for all $n\geq{\rm ht}(G)$, and so $f^{p^n}=\epsilon(f)^{p^n}$ for all $f\in k[G]$.
	If $\sigma(s)=\sum_{i,j} s_i\otimes f_j$ is the coaction map of $k[G]$ on $S$, then for all $s\in S$
	\[
	\sigma(s^{p^n})
	=\sigma(s)^{p^n}
	=\sum_{i,j} s_i^{p^n}\otimes f_j^{p^n}
	=\sum_{i,j} s_i^{p^n}\otimes \epsilon(f_j)^{p^n}
	=\sum_{i,j} s_i^{p^n}\epsilon(f_j)^{p^n}\otimes  1
	=s^{p^n}\otimes 1,
	\]
	where the last equality is the second comodule axiom, \cite[I.2.8(3)]{MR2015057}. Hence $s^{p^n}\in S^{G}$.
	
	For (3), let $n={\rm ht}(G)$. By \cite[I.9.4(1)]{MR2015057}, there is a filtration of $G$ by normal subgroup schemes
	\[
	G=G_0>G_1>\cdots>G_n=\{e_G\},
	\]
	with $G_i$ infinitesimal for all $0\leq i\leq n$, \cite[I.9.6(1)]{MR2015057}. We proceed by induction on $n$. 
	
	If $n=1$, then $s^p\in S^G$ for all $s\in S$ by part (1). Assume that the statement is true for all $i\leq n-1$.
	\begin{itemize}
		\item If $G_{n-1}\leq H$, then $s\in S^{G_{n-1}}$ for all $s\in S^H$, and $S^{G_{n-1}}$ is a $G/G_{n-1}$-module, \cite[I.6.4]{MR2015057}. As ${\rm ht}(G/G_{n-1})<{\rm ht}(G)=n$, one may apply the inductive assumption with $G$ replaced by  $G/G_{n-1}$, $H$ replaced by $H/G_{n-1}$ and $S$ replaced by $S^{G_{n-1}}$ to get that
		\[
		s^{|G/G_{n-1}{:}H/G_{n-1}|}\in (S^{G_{n-1}})^{G/G_{n-1}}.
		\]
		The result follows since $(S^{G_{n-1}})^{G/G_{n-1}}=S^G$ by \cite[I.6.4]{MR2015057} and  $|G/G_{n-1}{:}H/G_{n-1}|=|G{:}H|$ by the third isomorphism theorem, \cite[5.55]{MR3729270}.
		\item If $G_{n-1}\nleq H$ then for all $s\in S^H$ one has that $s^{|G_{n-1}H{:}H|}\in S^{G_{n-1}}\cap S^{H}=S^{G_{n-1}H}$ by part (1). Applying the inductive assumption with $G$ replaced by $G/G_{n-1}$, $H$ replaced by $G_{n-1}H/G_{n-1}$, $S$ replaced by $S^{G_{n-1}}$ and $s$ replaced by $s^{|G_{n-1}H{:}H|}$ yields
		\[
		\left(s^{|G_{n-1}H{:}H|}\right)^{|G/G_{n-1}{:}G_{n-1}H/G_{n-1}|}\in (S^{G_{n-1}H})^{(G/G_{n-1})}.
		\]
		As above, $(S^{G_{n-1}H})^{(G/G_{n-1})}=S^G$ by \cite[I.6.4]{MR2015057}, and $|G/G_{n-1}{:}G_{n-1}H/G_{n-1}|=|G{:}G_{n-1}H|$ by \cite[5.55]{MR3729270}. Thus $$|G/G_{n-1}{:}G_{n-1}H/G_{n-1}||G_{n-1}H{:}H|=|G:G_{n-1}H||G_{n-1}H{:}H|=|G{:}H|$$ and the result follows.
	\end{itemize}
\end{proof}

\noindent {\em Remark.}
The result above can be used to show that rings of invariants of finite group schemes are finitely generated, see \cite[2.9]{MR4692505}; note that this proof is considerably shorter than other classical proofs, as presented in \cite[\S4.2]{MR1243637}.\\

In the result below we do not assume $G$ to be infinitesimal, and we denote as usual the connected component of its identity by $G^0$.
\begin{corollary}\label{cor:height}
	For any $H\leq G$ and all $s\in S^H$, $s^{|G^0H{:}H|}\in S^{G^0H}$ .
\end{corollary}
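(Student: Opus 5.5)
The plan is to reduce to Proposition~\ref{prop:height}(3), applied to a suitable infinitesimal group scheme acting on a suitable ring of invariants. Set $K = G^0$, which is infinitesimal and normal in $G$. The natural candidate is the infinitesimal group scheme $G^0$, together with its closed subgroup scheme $G^0 \cap H = H^0$.

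First, since $G^0 \unlhd G$, the product $G^0H$ is a closed subgroup scheme of $G$. We have $S^{G^0H} = S^{G^0} \cap S^H$: invariants under a product of subgroup schemes, one of them normal, are exactly the simultaneous invariants. This is the same fact used in the proof of Proposition~\ref{prop:height}(3), for $G_{n-1}H$. Hence it suffices to show that $s^{|G^0H{:}H|}$ is $G^0$-invariant for every $s \in S^H$. Its $H$-invariance is automatic, since $S^H$ is a subring.

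Second, we restrict the coaction to $G^0$. Then $S$ is a $k[G^0]$-comodule algebra, and an $H$-invariant $s$ lies in $S^{H \cap G^0}$. Now apply Proposition~\ref{prop:height}(3) with $G$ replaced by the infinitesimal group $G^0$ and $H$ replaced by $H\cap G^0$. This gives $s^{|G^0 : G^0\cap H|} \in S^{G^0}$.

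Third, we identify the indices. By the second isomorphism theorem for finite group schemes (cf.\ \cite[5.55]{MR3729270}), $G^0H/H \cong G^0/(G^0\cap H)$, so $|G^0H{:}H| = |G^0{:}G^0\cap H|$. This gives the claim.

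The main point needing care is the identification $S^{G^0H}=S^{G^0}\cap S^H$ together with the index equality. I would justify the former by noting that $G^0\times H \to G^0H$ is faithfully flat (fppf) surjective, so an element coinvariant under both factors is coinvariant under the product. The latter follows from orders being multiplicative in exact sequences.
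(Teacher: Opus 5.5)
Your proof is correct and follows the paper's route: you apply Proposition~\ref{prop:height}(3) to the infinitesimal group $G^0$ and its subgroup $G^0\cap H=H^0$, combine the result with $H$-invariance to land in $S^{G^0}\cap S^H=S^{G^0H}$, and identify $|G^0H{:}H|=|G^0{:}H^0|$ via the isomorphism theorem. Your justification of $S^{G^0H}=S^{G^0}\cap S^H$, through faithful flatness of $G^0\times H\to G^0H$, spells out a step the paper simply asserts.
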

\begin{proof}
	Let $s\in S^H$, so that $s\in S^{H^0}$. By Proposition \ref{prop:height}, $s^{|G^0{:}H^0|}\in S^{G^0}$ and so $s^{|G^0{:}H^0|}\in S^{G^0}\cap S^H$. By \cite[I.6.2(3)]{MR2015057}, $|G^0H{:}H|=|G^0{:}(G^0\cap H)|=|G^0{:}H^0|$ and thus $s^{|G^0H{:}H|}\in S^{G^0}\cap S^H=S^{G^0H}$.
\end{proof}

\subsection{Definition of the norm}\label{sec:defnorm}
Let $G^0$ be the connected component of the identity of $G$, and let $G^{\text{\'et}}$ be the quotient $G/G^0$.  If $k^{\text{sep}}$ is a separable closure of $k$, then the base change $G^{\text{\'et}}\times_{{\rm Spec}(k)}{\rm Spec}(k^{\text{sep}})$ may be identified with the finite group $\Gamma_G=G(k^{\text{sep}})$ of $k^{\text{sep}}$-rational points of $G$, \cite[6.4]{MR547117}, which acts on $S\otimes k^{\text{sep}}$. For any closed subgroup scheme $H$ of $G$, $\Gamma_H=H(k^{\text{sep}})$ is a subgroup of $\Gamma_G$ and thus one can consider the group-theoretic Evens norm, \cite[6.1]{Evens},
\begin{equation}\label{eq:evensnorm}
	{\rm norm}_{\Gamma_H,\Gamma_G} \colon \left(S\otimes k^{\text{sep}}\right)^{\Gamma_H}\rightarrow\left( S\otimes k^{\text{sep}}\right)^{\Gamma_G},\; s\otimes \kappa\mapsto \prod_{\gamma \in T_{G/H}}\gamma (s\otimes \kappa),
\end{equation}
where $T_{G/H}$ is a right transversal of $\Gamma_H$ in $\Gamma_G$.

\begin{proposition}\label{prop:norminv}
	In the above notation, one has for all $s\in S^H$ that
	\[
	{\rm norm}_{\Gamma_H,\Gamma_G}\left((s\otimes 1_{k^{\text{sep}}})^{|G^0H{:}H|}\right)
	=
	\prod_{\gamma \in T_{G/H}}\gamma (s\otimes 1_{k^{\text{sep}}})^{|G^0H:H|}
	\in S^G
	\]
\end{proposition}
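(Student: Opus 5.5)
Set $t=s^{|G^0H{:}H|}$. By Corollary~\ref{cor:height}, $t\in S^{G^0H}$, and in particular $t\in S^{G^0}$. The plan is to pass to $S'=S\otimes k^{\text{sep}}$. There the product $P=\prod_{\gamma\in T_{G/H}}\gamma(t\otimes 1)$ makes sense, and we check that $P$ is invariant under both $G^0$ and $\Gamma_G$. These two invariances together should give invariance under $G_{k^{\text{sep}}}$, and Galois descent then puts $P$ in $S^G$.

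First, $S^{G^0}$ is a $G/G^0=G^{\text{\'et}}$-module algebra, by the result of \cite[I.6.4]{MR2015057} already used in Proposition~\ref{prop:height}. After base change to $k^{\text{sep}}$, $G^{\text{\'et}}$ becomes the constant group $\Gamma_G$, which therefore acts on $(S^{G^0}\otimes k^{\text{sep}})=S'^{G^0_{k^{\text{sep}}}}$ by algebra automorphisms. Next we check well-definedness. The element $t\otimes 1$ is fixed by $\Gamma_H$, since $t\in S^H$. So each factor $\gamma(t\otimes1)$ depends only on the coset of $\gamma$, and $P$ is independent of the transversal. Note that $\Gamma_H$ is the image of $H$ in $\Gamma_G$, i.e.\ $H(k^{\text{sep}})$ maps onto $(HG^0/G^0)(k^{\text{sep}})$, and the map $\Gamma_G\to G^{\text{\'et}}(k^{\text{sep}})$ is a bijection because $G^0(k^{\text{sep}})$ is trivial. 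Each factor lies in $S'^{G^0}$, because $\Gamma_G$ preserves this subalgebra, and hence $P\in S'^{G^0}$. Left multiplication by any $\gamma'\in\Gamma_G$ permutes the cosets $\Gamma_G/\Gamma_H$ (arranging left or right conventions as needed), so $\gamma' P=P$ by commutativity of $S$. Thus $P\in (S'^{G^0})^{\Gamma_G}=(S'^{G^0})^{G^{\text{\'et}}_{k^{\text{sep}}}}=S'^{G_{k^{\text{sep}}}}$, again by \cite[I.6.4]{MR2015057}.

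Finally, $P$ is fixed by the absolute Galois group of $k$ acting on the second factor. A Galois element $\tau$ acts compatibly with $\Gamma_G$, via $\tau\gamma\tau^{-1}$. It fixes $t\otimes 1$ and maps $\Gamma_H$ to itself, since $H$ is defined over $k$, so it permutes the cosets. Hence $\tau(P)=P$, and $P\in S'^{G_{k^{\text{sep}}}}\cap (S\otimes k)=S^G$, because invariants commute with flat base change.

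The main obstacle is the identification of how $\Gamma_G$ acts on $S'^{G^0}$ through $G^{\text{\'et}}$. The coaction on $S'$ is not literally an action of the abstract group $\Gamma_G$ unless one restricts to $G^0$-invariants, or uses that points of $G$ over $k^{\text{sep}}$ act on $S'$ via evaluation. I would define the action of $\gamma\in G(k^{\text{sep}})$ on $S'$ as $(\ide\otimes\gamma)\circ\sigma$, extended $k^{\text{sep}}$-linearly. This gives a genuine group action by algebra automorphisms, which makes the product manipulations legitimate. It remains to check that invariance under $G^0$ and under these points implies invariance under $G$, which follows because $G_{k^{\text{sep}}}$ is generated by $G^0$ and its $k^{\text{sep}}$-points, i.e.\ $G_{k^{\text{sep}}}=G^0\rtimes\Gamma_G$ when $k^{\text{sep}}$-points split off the étale quotient.
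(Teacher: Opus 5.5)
Your main line (second and third paragraphs) is the paper's proof. By Corollary~\ref{cor:height}, $t=s^{|G^0H{:}H|}\in S^{G^0H}$. The product then lies in $(S^{G^0}\otimes k^{\text{sep}})^{\Gamma_G}=S^G\otimes k^{\text{sep}}$, by \cite[I.6.4]{MR2015057} and flat base change of invariants. Galois invariance plus descent then places it in $S^G$. The only difference is that you check independence of the transversal and $\Gamma_G$-invariance by hand, where the paper appeals to the Evens norm.

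However, your claim that $G(k^{\text{sep}})\to G^{\text{\'et}}(k^{\text{sep}})$ is a bijection is wrong. Triviality of $G^0(k^{\text{sep}})$ only gives injectivity. For imperfect $k$ surjectivity can fail, so $G_{k^{\text{sep}}}\neq G^0\rtimes G(k^{\text{sep}})$, and the mechanism of your last paragraph breaks down. For instance, over $k=\mathbb{F}_p(t)$ take $G=\ker(x\mapsto x^{p^2}-tx^p)\le\mathbb{G}_a$, so $k[G]\cong k[x]/(x^p)\times k[x]/(x^{p(p-1)}-t)$. Then $G^0\cong\alpha_p$, and $G^{\text{\'et}}\cong\{y : y^p=ty\}$ has $p$ points over $k^{\text{sep}}$. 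But $G(k^{\text{sep}})=\{0\}$, because $t$ is not a $p$-th power in $k^{\text{sep}}$. For the regular coaction, $x^p$ is fixed by $G^0$ and by $G(k^{\text{sep}})$ but not by $G$. Indeed, with $H=1$ and $\Gamma_G$ read literally as $G(k^{\text{sep}})$, the product in the statement would just be $x^p\notin S^G=k$.

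The repair is to take $\Gamma_G:=G^{\text{\'et}}(k^{\text{sep}})$ and $\Gamma_H:=H^{\text{\'et}}(k^{\text{sep}})$, which is a subgroup since $H\cap G^0=H^0$. These act only on $S^{G^0}\otimes k^{\text{sep}}$, through the \'etale quotient. That is all your second paragraph actually uses, since $t\in S^{G^0}$. It is also how the paper's identification of $G^{\text{\'et}}_{k^{\text{sep}}}$ with a constant group via \cite[6.4]{MR547117} has to be read. With this reading, discard the bijectivity claim and your last paragraph, and the argument is complete.
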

\begin{proof}
	Let $s\in S^H$. By Corollary \ref{cor:height}, $s^{|G^0H{:}H|}\in S^{G^0H}$, and so $(s\otimes 1_{k^{\text{sep}}})^{|G^0H{:}H|}\in S^{G^0H}\otimes k^{\text{sep}}$.  $S^{G^0}\otimes k^{\text{sep}}$ is a $\Gamma_G$-module and thus eq.\   (\ref{eq:evensnorm}) gives
	\[
	{\rm norm}_{\Gamma_H,\Gamma_G}\left((s\otimes 1_{k^{\text{sep}}})^{|G^0H{:}H|}\right)
	=
	\prod_{\gamma \in T_{G/H}}\gamma (s\otimes 1_{k^{\text{sep}}})^{|G^0H{:}H|}
	\in ( S^{G^0H}\otimes k^{\text{sep}})^{\Gamma_G}.
	\]
	By \cite[I.2.10(3)]{MR2015057} $( S^{G^0}\otimes k^{\text{sep}})^{\Gamma_G}=( S^{G^0})^{G^{\text{\'et}}}\otimes k^{\text{sep}}=S^G\otimes k^{\text{sep}}$, and thus it remains to show invariance under the action of the absolute Galois group $\mathcal{G}={\rm Gal}(k^{\text{sep}}/k)$. For $\phi\in\mathcal{G}$,
	\begin{equation}\label{eq:normgal}
		\phi\prod_{\gamma \in T_{G/H}}\gamma (s\otimes 1_{k^{\text{sep}}})^{|G^0H{:}H|}=
		\prod_{\gamma \in T_{G/H}}\phi(\gamma)\phi( (s\otimes 1_{k^{\text{sep}}})^{|G^0H{:}H|})=
		\prod_{\gamma \in T_{G/H}}\gamma (s\otimes 1_{k^{\text{sep}}})^{|G^0H{:}H|},
	\end{equation}
	where the last equality follows since $\phi$ permutes the set of cosets $\Gamma_G/\Gamma_H$ and $s\otimes 1_{k^{\text{sep}}}$ is $\mathcal{G}$-invariant.
\end{proof}

\begin{definition}\label{def:norm}
	The {\em (relative) norm} from $H$ to $G$ is the map
	\[
	{\rm Nm}_H^G \colon S^H\rightarrow S^G,\; 
	s\mapsto{\rm norm}_{\Gamma_H,\Gamma_G}\left((s\otimes 1_{k^{\text{sep}}})^{|G^0H:H|}\right).
	\]
\end{definition}

\subsection{Properties of the norm.}\label{sec:propnorm}
The properties below can be seen as generalizations of well-known properties of the Evens norm from the context of finite groups, to that of finite group schemes.

\begin{proposition}\label{pr:norm-properties}
	\begin{enumerate}
		\item[]
		\item ${\rm Nm}_H^G(s)=s^{|G{:}H|}$ for all $s\in S^G$.
		\item If $K,H$ are closed subgroup schemes of $G$ with $K\leq H$, then ${\rm Nm}_H^G\circ {\rm Nm}_K^H={\rm Nm}_K^G$.
		\item ${\rm Nm}_H^G(rs)={\rm Nm}_H^G(r){\rm Nm}_H^G(s)$, for all $r,s\in S^H$.
		\item If  $f \colon S\rightarrow R$ is a $G$-algebra homomorphism, then ${\rm Nm}_H^G\left(f(s)\right)=f\left({\rm Nm}_H^G(s)\right)$  for all $s\in S^H$.
		\item If $\ell$  is an extension field of $k$ and we consider the $\ell$-algebra $S \otimes_k \ell$ we have ${\rm Nm}^G_H (s \otimes 1) = {\rm Nm}^G_H(s)$.
	\end{enumerate}
\end{proposition}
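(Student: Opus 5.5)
All five parts follow from Definition~\ref{def:norm}: ${\rm Nm}_H^G(s)$ is the Evens norm ${\rm norm}_{\Gamma_H,\Gamma_G}$ applied to $(s\otimes 1)^{|G^0H:H|}$. The plan is to reduce each statement to the corresponding classical property of the Evens norm for the finite groups $\Gamma_H\le\Gamma_G$ acting on $S\otimes k^{\rm sep}$. The only additional inputs are multiplicativity of index, $|G:K|=|G:H||H:K|$, and the relation between $|G:H|$ and the orders of $\Gamma_G/\Gamma_H$ and of the infinitesimal parts. Throughout we regard $S^G\subseteq S\otimes k^{\rm sep}$ via $s\mapsto s\otimes 1$, which is injective since $k^{\rm sep}$ is flat over $k$.

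For (3), each $\gamma\in\Gamma_G$ acts on $S\otimes k^{\rm sep}$ by ring automorphisms, and $x\mapsto x^{m}$ is multiplicative because $S$ is commutative; hence the product $\prod_\gamma\gamma(\cdot)^{m}$ is multiplicative. Part (4) holds because $f\otimes k^{\rm sep}$ is a ring homomorphism commuting with the $\Gamma_G$-action. For (5), base change to $\ell$ and choose a separable closure of $\ell$ containing $k^{\rm sep}$. Then $\Gamma_G$ embeds in $G(\ell^{\rm sep})$, and this embedding is bijective: étale points over $k^{\rm sep}$ already give all $\ell^{\rm sep}$-points of $G^{\text{\'et}}$. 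Likewise $G^0_\ell=(G^0)_\ell$, so the exponent $|G^0H:H|$ and the transversal are unchanged, and the same product computes both sides.

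For (1), take $s\in S^G$. Every $\gamma$ fixes $s\otimes1$, so ${\rm Nm}_H^G(s)=s^{|\Gamma_G:\Gamma_H|\,|G^0H:H|}$. It remains to show $|\Gamma_G:\Gamma_H||G^0H:H|=|G:H|$. We have $|G:H|=|G:G^0H|\,|G^0H:H|$. Moreover $G/G^0H\cong G^{\text{\'et}}/H^{\text{\'et}}$, since $G^0H/G^0\cong H/H^0$. After base change to $k^{\rm sep}$ this quotient has order $|\Gamma_G|/|\Gamma_H|$, using $|G^{\text{\'et}}|=|\Gamma_G|$ and similarly for $H$.

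Part (2) is the main obstacle, because the exponents and the transversals interact. Write $a=|G^0H:H|$ and $b=|H^0K:K|$. First compute ${\rm Nm}_K^H(s)=\prod_{\tau\in T_{H/K}}\tau(s)^{b}$. Next apply ${\rm Nm}_H^G$, whose $\Gamma_G$-elements act multiplicatively, to get $\prod_{\gamma\in T_{G/H}}\prod_{\tau}(\gamma\tau)(s)^{ab}$. The set $\{\gamma\tau\}$ is a transversal of $\Gamma_K$ in $\Gamma_G$, as in the classical transitivity of the Evens norm; here we use right transversals consistently, or note that the product over cosets is independent of the choice because $s$ is $\Gamma_K$-invariant. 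It then remains to check $ab=|G^0K:K|$. Using \cite[I.6.2(3)]{MR2015057} as in Corollary~\ref{cor:height}, $a=|G^0:H^0|$ and $b=|H^0:K^0|$, and $|G^0K:K|=|G^0:K^0|$. So $ab=|G^0K:K|$ by multiplicativity of index for the infinitesimal groups $K^0\le H^0\le G^0$. Finally, Galois descent as in Proposition~\ref{prop:norminv} shows that both sides lie in $S^G$, and since $S^G\to S\otimes k^{\rm sep}$ is injective, equality in $S\otimes k^{\rm sep}$ gives equality in $S^G$.
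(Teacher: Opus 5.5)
Your proposal is correct and follows the paper's proof essentially step for step. Part (1) goes through $s^{|\Gamma_G:\Gamma_H|\,|G^0H:H|}$ and the index identity $|G^{\text{\'et}}:H^{\text{\'et}}|\,|G^0:H^0|=|G:H|$; part (2) uses transitivity of the Evens norm together with $|G^0H:H|\,|H^0K:K|=|G^0K:K|$, where you unpack the transitivity by hand rather than quoting it; parts (3)--(4) use multiplicativity and naturality of the Evens norm; and for (5), which the paper calls trivial, you give a reasonable justification. Like the paper, you implicitly identify $\Gamma_G$ with $G^{\text{\'et}}(k^{\rm sep})$ (over an imperfect field $G(k^{\rm sep})$ can be strictly smaller), and this identification is what gives $|\Gamma_G|=|G^{\text{\'et}}|$ in (1) and the bijectivity you claim in (5).
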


\begin{proof}
	(1): If $s\in S^G$ then ${\rm Nm}_H^G(s)=(s^{|\Gamma_G {:}\Gamma_H|})^{|G^0H{:}H|}$, see \cite[Note on p. 58]{Evens}. Note that $|G^0H{:}H|=|G^0{:}H^0|$; the result follows since
	\[
	|\Gamma_G{:}\Gamma_H||G^0H{:}H|=|G^{\text{\'et}}{:}H^{\text{\'et}}||G^0{:}H^0|=|G{:}H|.
	\]
	
	(2): If $s\in S^K$ then
	\begin{eqnarray*}
		({\rm Nm}_H^G\circ {\rm Nm}_K^H)(s)&=&
		{\rm norm}_{\Gamma_H,\Gamma_G}
		\left[
		\left({\rm Nm}_K^H(s)\otimes 1_{k^{\text{sep}}}\right)^{|G^0H{:}H|}
		\right]
		\\
		&=&
		{\rm norm}_{\Gamma_H,\Gamma_G}
		\left[
		\left({\rm norm}_{\Gamma_K,\Gamma_H}\left((s\otimes 1_{k^{\text{sep}}})^{|H^0K{:}K|}\right)\otimes 1_{k^{\text{sep}}}\right)^{|G^0H{:}H|}
		\right]
		\\
		&=&
		({\rm norm}_{\Gamma_H,\Gamma_G}
		\circ
		{\rm norm}_{\Gamma_K,\Gamma_H})
		\left((s\otimes 1_{k^{\text{sep}}})^{|H^0K{:}K||G^0H{:}H|}\right)
		\\
		&=&
		{\rm norm}_{\Gamma_K,\Gamma_G}\left((s\otimes 1_{k^{\text{sep}}})^{|G^0K{:}K|}\right)
		=
		{\rm Nm}_K^G(s),
	\end{eqnarray*}
	where the last equality follows by transitivity of the Evens norm \cite[6.1.1(N1)]{Evens} and the fact that 
	\[
	|G^0H{:}H||H^0K{:}K|=|G^0{:}H^0||H^0{:}K^0|=|G^0{:}K^0|=|G^0K{:}K|.
	\]
	
	(3) follows by multiplicativity of the Evens norm \cite[6.1.1(N2)]{Evens} and (4) because the Evens norm commutes with equivariant homomorphisms of algebras. (5) is trivial.
\end{proof}

\subsection{Mumford's norm}\label{sec:Mumford}
In \cite[pp.112-113]{MumfordAbelian}, Mumford defines a variant of the norm map. Let $G$ be a finite group scheme over $k$ and let $S$ be a commutative $k$-algebra endowed with the structure of a right $k[G]$-comodule algebra via a coaction map $\sigma \colon S\rightarrow S\otimes k[G]$. Let $m \colon S\otimes k[G]\rightarrow {\rm End}_S\left(S\otimes k[G]\right)$ map $f\in S\otimes k[G]$ to the endomorphism given by multiplication by $f$ and let ${\rm det}$ denote the usual determinant.
\begin{definition}\label{def:Mumfordnorm}
	{\em Mumford's norm} ${\mathcal{N}}^G \colon S\rightarrow S^G$ is the composition
	\[
	S\overset{\sigma}{\rightarrow}S\otimes k[G]\overset{m}{\rightarrow}{\rm End}_S\left(S\otimes k[G]\right)\overset{{\rm det}}{\rightarrow} S.
	\]
\end{definition}

\noindent{\em Remark.} The fact that the image of ${\mathcal{N}}^G$ is invariant under $G$ is non-trivial; a proof can be found in loc.\ cit.. Also, even though Mumford assumes $k$ to be algebraically closed, his definition makes sense not only over arbitrary fields, but also over commutative rings, as shown in \cite[(4.18)-(4.21), pp. 56-57]{AbelianBook}. For a treatment in the language of Hopf algebras, see  \cite{MR1275919} and \cite[4.2.2]{MR1243637}.\\

\begin{proposition}\label{prop:Mumford}
	Let $S$ be a commutative $k$-algebra endowed with the structure of a right $k[G]$-comodule algebra. Then
	${\mathcal{N}}^G(s)={\rm Nm}^G(s)$ for all $s\in S$.
\end{proposition}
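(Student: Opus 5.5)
We compare the two norms via a reduction to the case $G=G^0H$ with $H=1$, i.e.\ we must show $\mathcal{N}^G(s)=\mathrm{norm}_{1,\Gamma_G}\bigl((s\otimes 1)^{|G^0|}\bigr)$ in $S\otimes k^{\text{sep}}$. First, both sides commute with base change: Mumford's norm is a determinant of the multiplication map on the free $S$-module $S\otimes k[G]$, and determinants are stable under extending scalars. Part (5) of Proposition~\ref{pr:norm-properties} gives the same for $\mathrm{Nm}$. Since $S\to S\otimes k^{\text{sep}}$ is injective, we may therefore assume $k$ is separably closed. Then $G^{\text{\'et}}$ is constant, and $k[G]\cong \prod_{\gamma\in\Gamma_G} k[G^0]\gamma$ as algebras, the decomposition being indexed by the rational points $\gamma\in G(k)$. (If $k$ is not perfect, $G^0$ need not split off as a subgroup, but the underlying scheme of $G$ is still the disjoint union of the translates $\gamma G^0$.)

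Next, decompose the determinant along this product. Under $S\otimes k[G]\cong\prod_{\gamma} S\otimes k[\gamma G^0]$, the element $\sigma(s)$ becomes a tuple of components $\sigma_\gamma(s)\in S\otimes k[\gamma G^0]$, and the determinant of multiplication factors as $\prod_\gamma \det\bigl(m(\sigma_\gamma(s))\bigr)$. Identify $\gamma G^0$ with $G^0$ by left translation by $\gamma$. Then $\sigma_\gamma(s)$ corresponds to $\sigma^0(\gamma\cdot s)$, the $G^0$-coaction applied to the translate $\gamma s$ of $s$ (up to the left/right convention, which only permutes the factors). This gives
\[
\mathcal{N}^G(s)=\prod_{\gamma\in\Gamma_G}\mathcal{N}^{G^0}(\gamma s),
\]
so Mumford's norm for $G$ is the Evens norm of Mumford's norm for $G^0$.

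It then remains to show $\mathcal{N}^{G^0}(t)=t^{|G^0|}$ for infinitesimal $G^0$; this is the main computation. Write $\sigma(t)=t\otimes 1+\sum_i t_i\otimes f_i$ with $f_i\in\mathfrak m$, using the counit axiom. Multiplication by the second term is nilpotent: filter $S\otimes k[G^0]$ by $S\otimes\mathfrak m^j$, which is finite because $\mathfrak m$ is nilpotent. The second term is strictly filtration-raising, so with respect to an $S$-basis adapted to the filtration the matrix of $m(\sigma(t))$ is block triangular with $t$ on the diagonal. The associated graded of the filtration is a free $S$-module of rank $\dim k[G^0]=|G^0|$, since $k[G^0]$ is a field-free local $k$-algebra and each $\mathfrak m^j/\mathfrak m^{j+1}$ is a $k$-vector space. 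Hence the determinant is $t^{|G^0|}$.

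Combining the three steps, and using $|G^0H{:}H|=|G^0|$ for $H=1$, gives
\[
\mathcal{N}^G(s)=\prod_\gamma (\gamma s)^{|G^0|}=\mathrm{Nm}^G(s).
\]
The step I expect to need the most care is the translation identification in the second paragraph. It requires checking that the restriction of $\sigma$ to the component $\gamma G^0$ really is the translated $G^0$-coaction, including the correct handling of left versus right actions. Since only the set of factors $\{\gamma s\}$ matters in the product, any resulting inversion $\gamma\mapsto\gamma^{-1}$ is harmless.
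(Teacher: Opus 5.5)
One step is wrong as stated: the reduction only to $k$ \emph{separably} closed, together with your parenthetical claim that $G$ is still the disjoint union of the translates $\gamma G^0$ with $\gamma\in G(k)$. Over a separably closed field the connected components of $G$ are indexed by $G^{\text{\'et}}(k)$. Each component is a $G^0$-torsor, and such a torsor need not have a rational point when $k$ is imperfect.

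For instance, take $k$ separably closed, $a\in k\setminus k^p$, and $G=\ker(x\mapsto x^{p^2}-ax^p)\le\mathbb{G}_a$, so $G^0=\alpha_p$. Let $c_1,\dots,c_{p-1}\in k$ be the roots of $y^{p-1}=a$. Then $k[G]\cong k[x]/(x^p)\times\prod_i k[x]/(x^p-c_i)$. Since $c_i\notin k^p$, each $k[x]/(x^p-c_i)$ is a purely inseparable field extension of $k$ of degree $p$. So $G$ has $p$ components but $G(k)=\{0\}$. The non-identity components are reduced, hence not isomorphic to $G^0$, and there is no $\gamma$ to translate by. Your final product over $G(k)$ then has a single factor: for $s=\lambda\in k$ it gives $\lambda^p$, whereas $\mathcal{N}^G(\lambda)=\lambda^{p^2}$.

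The repair is immediate. Your first paragraph shows that both norms commute with \emph{any} field extension, so base change to $\bar k$ instead, as the paper does. There $G(\bar k)\to G^{\text{\'et}}(\bar k)$ is bijective, and each local factor of $\bar k[G]$ has residue field $\bar k$ and a unique rational point. Your decomposition, the translation step and the computation $\mathcal{N}^{G^0}(t)=t^{|G^0|}$ are then all valid, and the argument is essentially the paper's.

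The only difference is that the paper does not translate back to $G^0$. It observes that $\mathrm{ev}_\gamma$ is the residue map of the local factor $A_\gamma$, so multiplication by $\sigma(s)$ on $S\otimes A_\gamma$ is $\gamma(s)$ plus an element of $S\otimes\mathfrak{m}_\gamma$, and then runs the same filtration argument. This avoids the left/right bookkeeping you flagged. That worry is harmless anyway: one choice of translation gives $\sigma^0(\gamma s)$, the other gives $(\gamma\otimes\mathrm{id})\sigma^0(s)$, whose determinant is $\gamma(\mathcal{N}^{G^0}(s))$, and both equal $\gamma(s)^{|G^0|}$.
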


\begin{proof}
	Determinants commute with scalar extension, thus $\mathcal{N}^G$ does so as well. By Proposition \ref{pr:norm-properties}, the same is true for ${\rm Nm}^G$, so we may assume $k$ is algebraically closed. This allows us to identify as before the \'etale quotient $G^{\text{\'et}}$ with the finite group of rational points $\Gamma_G=G(k)$ and to obtain, by \cite[6.8]{MR547117}, a semi-direct product decomposition $G \cong G^0 \rtimes \Gamma_G$, with $\Gamma_G$  viewed as a subgroup of $G$. For each $\gamma \in \Gamma_G$, let $G_\gamma$ be the connected component of $G$ containing $\gamma$, and let $A_\gamma = k[G_\gamma]$; note that translation by $\gamma^{-1}$ induces an isomorphism $A_\gamma \cong k[G^0]$. The coordinate algebra $k[G]$ then decomposes as a product of these local algebras
	$
	k[G]\cong \prod_{\gamma\in\Gamma_G} A_\gamma,
	$
	which induces an $S$-module decomposition
	$
	S\otimes k[G]=\prod_{\gamma\in\Gamma_G}S\otimes A_\gamma,
	$
	with each $S\otimes A_\gamma$ a free $S$-module of rank $|G^0|$. If $s \in S$ and $m_s$ denotes the $S$-endomorphism of $S\otimes k[G]$ given by multiplication by $\sigma(s)$ and $m_{s,\gamma}$ is its restriction to $S\otimes A_\gamma$, then  
	$
	m_s=\prod_{\gamma\in\Gamma_G} m_{s,\gamma}
	$
	and so
	\[
	{\mathcal{N}}^G(s)={\rm det}(m_s)=\prod_{\gamma\in\Gamma_G} {\rm det}(m_{s,\gamma}).
	\]
	It thus suffices to show that ${\rm det}(m_{s,\gamma})=\gamma(s)^{|G^0|}$. Let $\mathfrak{m}_\gamma$ be the maximal ideal of $A_\gamma$. Since the latter is a finite local algebra over an algebraically closed field, its residue field is $A_\gamma/\mathfrak{m}_\gamma \cong k$, \cite[7.9]{Atiyah}. By  \cite[I.2.8]{MR2015057}, the action of the $k$-rational point  $\gamma\in\Gamma_G=G(k)$ on $s\in S$ is given by
	\[
	\gamma(s) = ({\rm id}_S \otimes {\rm ev}_\gamma)(\sigma(s))
	\]
	where ${\rm ev}_\gamma\colon k[G]\twoheadrightarrow k$ is the evaluation map at $\gamma$. This may be identified with the composition
	\[
	S\xrightarrow{\sigma} S\otimes k[G] \xrightarrow{{\rm id}_S\otimes\pi_\gamma} S\otimes A_\gamma \xrightarrow{{\rm id}_S\otimes {\rm ev}_\gamma} S\otimes k \cong S,
	\]
	and so the endomorphism $m_{s,\gamma}$ of $S\otimes A_\gamma$ acts on the quotient  $(S\otimes A_\gamma)/(S\otimes \mathfrak{m}_\gamma)\cong S$ as multiplication by $\gamma(s)$. Thus $m_{s,\gamma}$ acts on $S\otimes A_\gamma$ by multiplication by $\gamma(s)\cdot 1 + \eta$, with  $\eta\in S\otimes \mathfrak{m}_\gamma$ and the term $\gamma(s)\cdot 1$ acting as scalar multiplication. Since $\mathfrak{m}_\gamma$ is nilpotent, we can choose a basis for $S\otimes A_\gamma$ compatible with the filtration $S\otimes A_\gamma \supset \mathfrak{m}_\gamma (S\otimes A_\gamma) \supset \dots \supset 0$. With respect to such a basis, the action of $\eta$ is strictly triangular with zeros on the diagonal, and thus the matrix of $m_{s,\gamma}$ is triangular with diagonal entries all equal to $\gamma(s)$, giving the desired
	\[
	{\rm det}(m_{s,\gamma})=\gamma(s)^{{\rm rank}_S(S\otimes A_\gamma)}=\gamma(s)^{|G^0|}.
	\]
\end{proof}

\subsection{The field norm} \label{sec:fieldnorm}
In the notation of Section \ref{sec:notation}, assume that $S=L$ is a field acted upon by $G$. Applying Definition \ref{def:norm} with $H$ trivial, gives rise to a norm map 
\[
{\rm Nm}^G \colon L\rightarrow L^G,\;s\mapsto  \prod_{\gamma \in \Gamma_G}\gamma (s\otimes 1_{k^{\text{sep}}})^{|G^0|}.
\] 
One can also consider the classical {\em field norm}, see \cite[page 456]{MR3677125},
\begin{equation}\label{eq:normother2}
	{\rm N}_{L/L^G} \colon L\rightarrow L^G,\; s\mapsto {\rm det}(m_s),
\end{equation}
where $m_s \colon L\rightarrow L,\;t\;{\mapsto}st$ is the $L^G$-endomorphism of $L$ given by multiplication by $s$.

\begin{proposition}
	In the above notation, $[L{:}L^G]$ divides $|G|$ and ${\rm Nm}^G(s)={\rm N}_{L/L^G}(s)^{\frac{|G|}{[L{:}L^G]}}$ for all $s\in L$.
\end{proposition}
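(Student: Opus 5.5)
The plan is to reduce everything to Mumford's norm and compare determinants. Set $K = L^G$. By Proposition~\ref{prop:Mumford}, ${\rm Nm}^G(s) = \mathcal{N}^G(s)$. Here $\mathcal{N}^G(s)$ is the determinant, over $L$, of multiplication by $\sigma(s)$ on $L\otimes k[G]$, with $L$ acting through the first factor. So the task becomes an identity of determinants of $L$-linear maps.

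The key structure is the $L$-algebra homomorphism
\[
\mu \colon L\otimes_K L \rightarrow L\otimes k[G],\qquad a\otimes b \mapsto (a\otimes 1)\sigma(b).
\]
It is well defined because $\sigma(b) = b\otimes 1$ for $b\in K$. It is $L$-linear for the left factor, and $\mu(1\otimes s) = \sigma(s)$. Geometrically, $\mu$ corresponds to the morphism $G\times X\rightarrow X\times_{Y}X$ with $X = \Spec L$ and $Y = \Spec K$.

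I claim that $L\otimes k[G]$ is free of some finite rank $r$ as a module over $B = L\otimes_K L$ via $\mu$. Granting this, multiplication by $\sigma(s)$ on $L\otimes k[G]\cong B^r$ is $r$ copies of multiplication by $1\otimes s$ on $B$. Hence
\[
\mathcal{N}^G(s) = \det\nolimits_L(1\otimes s \mid L\otimes_K L)^r = {\rm N}_{L/K}(s)^r,
\]
where the last equality holds because determinants commute with the base change $K\rightarrow L$. Comparing $L$-dimensions gives $|G| = r\,[L{:}K]$. This shows both that $[L{:}K]$ divides $|G|$ and that $r = |G|/[L{:}K]$, which is exactly the statement.

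The main obstacle is proving this freeness, including the finiteness of $[L{:}K]$. I would first prove that $\mu$ is surjective after identifying its target with a quotient, following the orbit argument of Demazure--Gabriel. Base change to $L$ gives the $L$-point $x$ of $X_L$ corresponding to the diagonal. Let $H\leq G_L$ be its stabilizer, a closed subgroup scheme. The orbit map $G_L\rightarrow X_L$, $g\mapsto gx$, factors through $G_L/H$.

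I expect the induced map $G_L/H \rightarrow X\times_Y X = \Spec B$ to be an isomorphism. Its injectivity comes from the definition of $H$. Its surjectivity reflects transitivity of $G$ on fibres of $X\rightarrow Y = X/G$, using that $Y$ is the categorical quotient since $K = L^G$; \cite[III.12]{MumfordAbelian} is relevant here.

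Then $G_L\rightarrow G_L/H$ is an $H$-torsor, so $L\otimes k[G]$ is locally free over $k[G_L/H]\cong B$ of rank $|H|$. Since $B$ is finite over $L$, it is a finite product of local rings, so locally free means free, giving $r = |H|$.

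Should the orbit identification prove too delicate, I would try a fallback: reduce via Lemma~\ref{inf}-style factorisations to $G$ infinitesimal and $G$ étale separately, and use Proposition~\ref{pr:norm-properties}(2) for $K$ an intermediate field. In the étale case this is classical Galois theory with a kernel of the action. In the infinitesimal case $L/K$ is purely inseparable of exponent bounded by Proposition~\ref{prop:height}, and the norm is a $p$-power computation.
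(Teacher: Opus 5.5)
\textbf{There is a genuine gap.} Your overall architecture is the paper's: pass to Mumford's norm, use the map $\mu$ (the paper's $\beta$), get freeness of $L\otimes k[G]$ over $L\otimes_K L$, then compare determinants and count dimensions. The determinant and dimension steps are fine once freeness is known. The gap is at the step you yourself flagged, the claimed isomorphism $G_L/H\cong \Spec B$.

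The monomorphism half is standard. It yields a closed immersion, i.e.\ a surjection $B\twoheadrightarrow k[G_L/H]$, so $[L{:}K]\ge |G|/|H|$. What is actually needed is injectivity of $\mu$, i.e.\ the reverse inequality $[L{:}K]\le |G|/|H|$, including finiteness of $[L{:}K]$. Your justification (transitivity of $G$ on fibres, and $Y$ being a categorical quotient) is set-theoretic and cannot detect this. When $L/K$ is inseparable, $B=L\otimes_K L$ is non-reduced. For $G$ infinitesimal, both $G_L/H$ and $\Spec B$ are one-point schemes, so the entire content is a comparison of lengths, about which "transitivity on fibres" says nothing. Finiteness is not automatic either: Proposition~\ref{prop:height} only gives $L^{p^n}\subseteq K$, and $[L{:}L^{p^n}]$ can be infinite.

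The missing ingredient is a Galois theorem of Artin/Jacobson type. Let $N$ be the kernel of the action. Then $G/N$ acts faithfully on the field $L$, and faithfulness forces $L\otimes_{L^G}L\to L\otimes k[G]^N$ to be an isomorphism. The paper cites \cite[8.3.7]{MR1243637} for this. It then uses Nichols--Zoeller for freeness of $k[G]$ over $k[G]^N$, which plays the role of your torsor argument.

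Alternatively, you could appeal to Mumford's theorem on quotients by \emph{free} actions in \cite{MumfordAbelian}, which gives $X\times G\cong X\times_Y X$ scheme-theoretically. To use it you would still have to show that your stabilizer $H\le G_L$ equals $N_L$, so that $G/N$ acts freely. You would also need to check that the result applies to $\Spec L$, which need not be of finite type over $k$.

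Your fallback has the same hole. In the infinitesimal case the norm comparison is indeed a trivial $p$-power identity, but only once you know $[L{:}K]$ is finite and divides $|G|$, which is exactly the missing Galois-theoretic statement.
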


\begin{proof}
	Let $H$ be the kernel of the action of $G$ on $L$; then $G/H$ acts faithfully on $L$, and so by \cite[8.3.7]{MR1243637} 
	\[
	\dim_k k[G]^H=\dim_k k[G/H]= [L{:}L^{G/H}]=[L{:}L^G].
	\]
	By the Nichols-Zoeller theorem, see \cite[3.1.5]{MR1243637}, $\dim_k k[G]^H=[L{:}L^G]$ divides $\dim_kk[G]=|G|$ and $k[G]$ is free over $k[G]^H$ of rank $\frac{|G|}{[L{:}L^G]}$.
	
	By \cite[III.\S 9.1.(12)]{zbMATH05069335} one may extend scalars from $L^G$ to $L$, view the field norm as the determinant of multiplication by $1 \otimes s$ on the $L$-module $L \otimes_{L^G} L$ and rewrite eq. (\ref{eq:normother2}) as
	\[
	{\rm N}_{L/L^G}(s)=
	\det\left(L\xrightarrow{\cdot s} L\right)
	=
	\det \left( L \otimes_{L^G} L \xrightarrow{\cdot (1 \otimes s)} L \otimes_{L^G} L \right).
	\]
	Consider the $L$-map
	$
	{\beta}: L \otimes_{L^G} L \longrightarrow L \otimes_k k[G], \; x \otimes y \longmapsto x\sigma(y)
	$, see  \cite[8.1.1]{MR1243637}; since $H$ acts trivially on $L$, $\sigma(s)\in L\otimes_k k[G]^H$ and so $\beta$ factors as 
	\[
	L \otimes_{L^G} L \xrightarrow{\bar{\beta}} L \otimes_k k[G]^H\hookrightarrow L\otimes_k k[G].
	\]
	As $G/H$ acts faithfully on $L$, \cite[8.3.7]{MR1243637} implies that $\bar{\beta}$ is an isomorphism of $L$-algebras, mapping $1 \otimes s$ to $\sigma(s)$. Thus
	\[
	{\rm N}_{L/L^G}(s)
	=
	\det \left( L \otimes_{L^G} L \xrightarrow{\cdot (1 \otimes s)} L \otimes_{L^G} L \right)
	=
	\det \left( L \otimes_k k[G]^H \xrightarrow{\cdot \sigma(s)} L \otimes_k k[G]^H \right).
	\]
	Finally, recalling from the first paragraph that $L \otimes_k k[G]$ is free over $L \otimes_k k[G]^H$ of rank $\frac{|G|}{[L{:}L^G]}$, one has by  \cite[III.\S 8.6.(33)]{zbMATH05069335}  that
	\begin{align*}
	{\rm N}_{L/L^G}(s)^{\frac{|G|}{[L{:}L^G]}}
	&=
	\det \left( L \otimes_k k[G]^H \xrightarrow{\cdot \sigma(s)} L \otimes_k k[G]^H \right)^{\frac{|G|}{[L:L^G]}}\\
	&=
	\det \left( L \otimes_k k[G] \xrightarrow{\cdot \sigma(s)} L \otimes_k k[G] \right)=\mathcal{N}^G(s),
	\end{align*}
	where $\mathcal{N}^G(s)$ is Mumford's norm of Definition \ref{def:Mumfordnorm}. The result follows since ${\rm Nm}^G(s)=\mathcal{N}^G(s)$ by Proposition \ref{prop:Mumford}.
\end{proof}

\noindent{\em Remark.} 
One may alternatively prove directly that ${\rm Nm}^G$ coincides with the following equivalent definition of the field norm of a finite extension $L/K$,   see for example \cite[VI.\S5]{MR1878556},
\[
{\rm N}_{L/K} \colon L\rightarrow K,\;
s\mapsto
\prod_{\sigma\in{\rm Emb}_K(L,\overline{K})}\sigma(s)^{[L{:}K]_i},
\]
where ${\rm Emb}_K(L,\overline{K})$ are the distinct $K$-embeddings of $L$ in an algebraic closure of $K$ and $[L{:}K]_i$ is the inseparable degree of $L/K$.

\section{The Wirthm\"uller Isomorphism} \label{wirt}

Although we did not use the Wirthm\"uller isomorphism (Theorem~\ref{th:wirth}) to define the transfer map, we did use it to develop the properties of the transfer and it is an important feature of the representation theory of finite group schemes. A statement of the result can be found in \cite[I.8.17]{MR2015057}; it does not mention that the isomorphism is natural, but this can be deduced from the proof. All the ingredients of the proof also appear in \cite{MR347883}.

This is an example of a phenomenon often observed in symmetric monoidal categories. A very general account is given in  \cite[Example 4.6]{MR3542492}, although it only considers the stable category. 

Here we sketch a proof that makes some of the properties of the isomorphism transparent.

\begin{lemma}
	For finite group schemes $H \leq G$, we have two natural transformations of functors from $H$-modules to $G$-modules, given on an $H$-module $M$ as follows:
	\begin{enumerate}
		\item 
		$$\coind^G_H (M) = k {}_GG_H  \otimes_{kH}  M \cong \Hom_{H_1} (  \Hom _{H_2} (k {}_G G_{H_2}, k {}_{H_1}H_{H_2}), {}_{H_1}M),$$
		by $g \otimes m \mapsto (f \mapsto f(g)m)$ for $g \in kG$, $m \in M$, $f \in \Hom _{H_2} (k {}_G G_{H_2}, k {}_{H_1}H_{H_2})$.
		\item 
		$$\ind_H^G(M) = \Hom_H(k {}_H G_G , {}_H M) \cong \Hom_{H_1} (  \Hom _{H_2} (k [{}_G G_{H_2}], k [{}_{H_1}H_{H_2}]), {}_{H_1}M).$$
	This is induced by isomorphisms 
	$$k{}_{H_1}G_G \cong k[ {}_G G_{H_1} ]^* \cong \Hom _{H_2} (k {}_G G_{H_2}, k {}_{H_1}H_{H_2});$$
	the first is just the double dual together with the antipode to switch sides, the second is $\eva_1 \circ f \mapsfrom f$, where $\eva_1$ denotes evaluation at 1.
\end{enumerate}
\end{lemma}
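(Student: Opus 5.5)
The plan is to treat both parts as consequences of one structural fact: $kG$ is a finitely generated projective (indeed free, by Nichols--Zoeller) right $kH$-module. Write $P = k{}_GG_{H}$ for $kG$ as a $G$--$H$ bimodule, and $P^\vee = \Hom_{H_2}(k{}_GG_{H_2}, k{}_{H_1}H_{H_2})$ for its $H$-dual, an $H_1$--$G$ bimodule. For a finitely generated projective right $kH$-module $P$ and any left $kH$-module $M$, the natural map
\[ P \otimes_{kH} M \longrightarrow \Hom_{H_1}(P^\vee, M), \qquad g \otimes m \mapsto (f \mapsto f(g)m) \]
is an isomorphism. The check is additive in $P$, so it reduces to $P = kH$, where it is the identity $kH \otimes_{kH} M \cong M \cong \Hom_{H}(kH, M)$, using $\Hom_{H_2}(kH,kH)\cong kH$. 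I will then verify that the map is well defined: it is balanced over $kH$ because $f$ is right $H_2$-linear, and $f \mapsto f(g)m$ is left $H_1$-linear because $f$ takes values in the $H_1$--$H_2$ bimodule $kH$. I will also check that it is $G$-equivariant, with $G$ acting on $P^\vee$ through its left action on $P$ (via the antipode when switching sides). Naturality in $M$ is clear. This settles part (1).

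For part (2), I will build the isomorphism of bimodules $k{}_{H_1}G_G \cong P^\vee$ in two steps, as the statement indicates. First, $kG = k[G]^*$ and $k[G]$ is finite dimensional, so $kG^{**}$ gives $k[G]^* \cong kG$. The antipode swaps left and right, turning $k{}_HG_G$ into the dual of $k[{}_GG_H]$. Second, I will use the map $P^\vee \to kG^* = k[G]$ (up to the identification) given by $f \mapsto \eva_1\circ f$, where $\eva_1 \colon kH \to k$ picks the coefficient of the identity. Concretely, $\eva_1$ is a Frobenius form on $kH$, namely the integral of $k[H]$ (equivalently, it is the counit-dual giving the pairing). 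Once this is an isomorphism, applying $\Hom_{H_1}(-, M)$ gives the stated description of $\ind^G_H(M)$, naturally in $M$.

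The main obstacle is showing that $f \mapsto \eva_1\circ f$ is a bijection $\Hom_{H_2}(kG, kH) \to \Hom_k(kG,k)$ with the correct bimodule compatibility. This is the Frobenius reciprocity $\Hom_H(P, \Hom_k(kH,k)) \cong \Hom_k(P,k)$, combined with the identification $kH \cong \Hom_k(kH,k)$ as right $H$-modules. That identification holds because $kH$ is Frobenius. The twist appears exactly here: the Frobenius form $\eva_1$ is an $H$-invariant only up to the Nakayama automorphism, which is the modular character ${}_H\delta$. So I will track which side is twisted, choosing the copy of $H$ as in the statement so that the twist is absorbed into the indexing $H_1$ versus $H_2$. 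I will verify bijectivity by dimension count plus injectivity. Injectivity holds because, if $\eva_1(f(gh)) = 0$ for all $h$, then every coefficient of $f(g)$ vanishes, by nondegeneracy of the Frobenius pairing on $kH$. After that, the remaining equivariance checks are routine.
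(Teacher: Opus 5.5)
\textbf{Part (1)} is fine and follows the paper's argument: additivity reduces the claim to the free module $kH$.

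\textbf{Part (2) has a genuine gap.} You read the last term of the displayed chain literally, as $\Hom_{H_2}(k{}_GG_{H_2},k{}_{H_1}H_{H_2})$ built from group algebras, and you aim for an $H_1$--$G$ bimodule isomorphism $k{}_{H_1}G_G\cong\Hom_{H_2}(kG,kH)$. That isomorphism is false in general. Combined with your part (1) it would give $\ind^G_H(M)\cong\coind^G_H(M)$ naturally in $M$. But by the Wirthm\"uller isomorphism (Theorem~\ref{th:wirth}), $\ind^G_H(k)\cong\coind^G_H(\omega^{-1}_{H,G})$, and this is often not $\coind^G_H(k)$. For example, take $T_1\le B_1$ in $SL_2$ with $p>2$. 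Then $\omega_{T_1,B_1}$ is nontrivial, and $\coind^{B_1}_{T_1}(\lambda)$ is the projective cover of $\lambda$, so the two modules have different heads.

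Your own bookkeeping shows where it breaks:
\begin{enumerate}
\item Your first step identifies $k{}_{H_1}G_G$ with $k[G]^*$, but $f\mapsto {\eva}_1\circ f$ lands in $(kG)^*=k[G]$. Linking the two silently uses $kG\cong k[G]$, on top of the $kH\cong k[H]$ you invoke.
\item As bimodules, these Frobenius identifications are twisted by ${}_G\delta$ and ${}_H\delta$; this is the lemma $k[G]\otimes{}_G\delta\cong kG$ stated just afterwards.
\item The net twist is $\omega_{H,G}^{\pm1}$. It is a genuine one-dimensional $H$-module, and no choice of which copy of $H$ is called $H_1$ can absorb it.
\end{enumerate}
Relatedly, ${\eva}_1$ in the statement is evaluation at the identity, i.e.\ the counit $k[H]\to k$. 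It is not a Frobenius form (integral) on $kH$, and ``the coefficient of the identity'' has no meaning for a non-constant $H$.

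\textbf{The intended statement} has coordinate rings: $k[{}_GG_{H_1}]^*\cong\Hom_{H_2}(k[{}_GG_{H_2}],k[{}_{H_1}H_{H_2}])$. This is what matches the displayed formula for $\ind^G_H(M)$, the paper's proof, and how the lemma is used afterwards. Its proof needs no Frobenius structure. Since $k[H]=(kH)^*\cong\ind^H_1(k)$ as right $H$-modules, the adjunction $\res\dashv\ind$ gives $\Hom_{H_2}(k[G],k[H])\cong\Hom_k(k[G],k)=k[G]^*$ via $f\mapsto{\eva}_1\circ f$, where ${\eva}_1$ is the counit of that adjunction. The remaining $H_1$- and $G$-equivariance is routine.

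You actually wrote down exactly this adjunction, $\Hom_H(P,\Hom_k(kH,k))\cong\Hom_k(P,k)$. The problem is that you applied it with $P=kG$ and then tried to turn $\Hom_k(kH,k)=k[H]$ back into $kH$. Applying it with $P=k[G]$ and leaving the target as $k[H]$ is the whole proof. This untwisted description of $\ind^G_H$ is the point of the lemma: comparing it with part (1), via the twisted identification of $kG$ with $k[G]$, is what produces $\omega_{H,G}$ in the Wirthm\"uller isomorphism.
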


\begin{proof}
	We leave it to the reader to check the required equivariance properties of these maps.
	
	Part (1) is in \cite[I 1.16]{MR2015057}. We claim that the same formula gives an isomorphism
	$$ N \otimes_H M \rightarrow \Hom_{H_1} (  \Hom _{H_2} (N, k {}_{H_1}H_{H_2}), {}_{H_1}M),$$
	for any finitely generated projective right $H_2$-module $N$. Any finitely-generated projective is a summand of a finitely-generated free module and both sides commute with finite direct sums, so we are reduced to the case $N \cong kH$. which is clear.
	
	The second isomorphism in part (2) is because $k[H] \cong (kH)^* \cong \ind_1^H(k)$ as right $H$-modules and this is an instance of the adjoint property of induction.
\end{proof}

The next lemma is from \cite[I 8.12 Remark 1]{MR2015057}.

\begin{lemma}
	$kG$ has the structure of a $k[G]$-module and as such  the multiplication map $k[G] \otimes {}_G \delta \rightarrow kG$ gives an isomorphism of $G{-}G$-bimodules (with the diagonal action on the left hand side).
\end{lemma}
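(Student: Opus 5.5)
The plan is to realise $kG$ as a $k[G]$-module, show that the multiplication map is a bimodule map, and then get bijectivity from the fundamental theorem of Hopf modules.

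\emph{The $k[G]$-action.} Since $kG = k[G]^*$, the algebra $k[G]$ acts on $kG$ by the transpose of multiplication: $(f\cdot x)(a) = x(af)$ for $f,a \in k[G]$ and $x \in kG$. Equivalently, $f \cdot x = \sum f(x_{(1)})\, x_{(2)}$, using the coproduct of $kG$, which is dual to the multiplication of $k[G]$. This makes $kG$ a $k[G]$-module, and the multiplication map is
\[
\mu\colon k[G]\otimes {}_G\delta \to kG,\qquad f\otimes \lambda \mapsto f\cdot\lambda .
\]

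\emph{Equivariance.} On the left-hand side, $G$ acts diagonally from both sides. On $k[G]$ the two actions are left and right translation, given by $g\cdot f = \sum f_{(1)} f_{(2)}(g)$ and symmetrically. On ${}_G\delta = kG^G$ the left action is multiplication in $kG$ and the right action is trivial. The right action needs no separate treatment: ${}_G\delta$ consists of the $x$ with $xg = \varepsilon(g)x$, and the $k[G]$-action is compatible with right multiplication twisted by translation. The left action is checked by the identity
\[
g\,(f\cdot x) = \sum (g_{(1)}\cdot f)\cdot (g_{(2)}x),
\]
which is the standard module-algebra compatibility of the Heisenberg double. I will verify both identities by Sweedler-notation computations using $\Delta(gx) = \Delta(g)\Delta(x)$. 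Using the antipode to switch sides, as the paper's conventions allow, both become routine.

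\emph{Bijectivity.} Both sides have dimension $|G|$, because $\dim {}_G\delta = 1$ by Larson--Sweedler. It therefore suffices to prove injectivity, or surjectivity. For this I use the fundamental theorem of Hopf modules. The space $kG$ is a $kG$-Hopf module, via left multiplication and its coproduct; equivalently, it is a $k[G]$-module compatible with the $kG$-action. The coinvariants, that is the integrals, form ${}_G\delta$. The theorem states that the map $H \otimes M^{\mathrm{co}H} \to M$ is an isomorphism, and applying it to the dual Hopf algebra $k[G]$ acting on $kG$ gives exactly $\mu$. Equivalently, one can use that a non-zero integral $\lambda$ gives the Frobenius isomorphism $k[G] \to kG$, $f \mapsto f\cdot\lambda$; this is the statement that $kG$ is free of rank one over $k[G]$ with basis an integral, as in the Larson--Sweedler theorem \cite[2.1.3]{MR1243637}.

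The main obstacle is the careful bookkeeping of sides and antipodes in the equivariance check. In particular, the trivial right action on ${}_G\delta$ must combine with right translation on $k[G]$ to produce the right multiplication on $kG$. Bijectivity is then immediate from the cited theorem.
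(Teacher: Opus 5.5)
Your argument is correct, and it is essentially the standard Hopf-module / Larson--Sweedler argument that underlies the paper's citation of \cite[I 8.12 Remark 1]{MR2015057}; the paper gives no proof beyond that reference. Watch the sides, though: the coinvariants equal ${}_G\delta = kG^G$ only for the Hopf-module structure coming from \emph{right} multiplication (left multiplication gives the left integrals ${}^GkG$, a different line unless $G$ is unimodular), and the translation that makes $g(f\cdot x)=\sum (g_{(1)}\cdot f)\cdot(g_{(2)}x)$ hold is $z\mapsto f(g^{-1}z)$, not the $z\mapsto f(zg)$ that your formula $\sum f_{(1)}f_{(2)}(g)$ gives.
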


Using this lemma we find
$$ \Hom _{H_2} (k {}_G G_{H_2}, k {}_{H_1}H_{H_2}) \cong \Hom _{H_2} (k [{}_G G_{H_2}] {}_G \delta , k [{}_{H_1}H_{H_2}] {}_{H_1} \delta ) \cong \Hom _{H_2} (k [{}_G G_{H_2}], k [{}_{H_1}H_{H_2}]) {}_G \delta^{-1} {}_{H_1} \delta .$$
Combining this with the previous lemma, we obtain the Wirthm\"uller isomorphism.

This formulation makes it straightforward to check identities such as the following transitivity result for $K \leq H \leq G$.

$$ W^G_K(M) = \coind ^G_H ( \ide_{\omega^{-1} _{H,G}} \otimes W^H_K(M)) \circ W^G_H( \ind ^H_K (M)) = W^G_H(\coind^H_K (\omega^{-1} _{K,H}M)) \circ \ind^G_H(W^H_K(M)).$$

\bibliographystyle{alpha}
\bibliography{KKGeneral.bib}

\end{document}